\documentclass[11pt,a4paper]{article}
\usepackage[a4paper,top=3cm,bottom=4.5cm,left=2.5cm,right=2.5cm]{geometry}
\usepackage{graphicx,caption}
\usepackage{enumerate}
\captionsetup{width=130mm}
\usepackage{amssymb}

\textheight237mm \textwidth172mm \topmargin-10mm \hoffset-8mm
\parindent10pt

\def\XXint#1#2#3{{\setbox0=\hbox{$#1{#2#3}{%
				\int}$ }
		\vcenter{\hbox{$#2#3$ }}\kern-.6\wd0}}

\usepackage{mathrsfs}
\usepackage{amsmath}
\usepackage{latexsym}
\usepackage{amsthm}
\usepackage{eucal}
\usepackage{eufrak}
\usepackage{relsize}
\newtheorem{definition}{Definition}[section]

\newtheorem{lemma}[definition]{Lemma}

\newtheorem{teo}[definition]{Theorem}

\newtheorem{prop}[definition]{Proposition}

\newtheorem{corollary}[definition]{Corollary}
\theoremstyle{remark}
\newtheorem{remark}[definition]{Remark}
\newtheorem{example}[definition]{Example}

\numberwithin{equation}{section}
\DeclareMathOperator{\expo}{expo}
\DeclareMathOperator{\extr}{extr}
\DeclareMathOperator{\epi}{epi}

\title{A new model for suspension bridges\\
	involving the convexification of the cables}
\author{Graziano CRASTA$^{\dagger}$ -- Alessio FALOCCHI$^\ddagger$ -- Filippo GAZZOLA$^\ddagger$\\
	{\small $^\dagger$Dipartimento di Matematica "G. Castelnuovo" - Sapienza Universit\`a di Roma,}\vspace{-3mm}\\
	{\small P.le Aldo Moro 5 - 00185 Roma, Italy}\\
	{\small $^\ddagger$Dipartimento di Matematica - Politecnico di Milano,}\vspace{-3mm}\\
	{\small Piazza Leonardo da Vinci 32 - 20133 Milano, Italy}\\
	{\small {\tt graziano.crasta@uniroma1.it - alessio.falocchi@polimi.it - filippo.gazzola@polimi.it}}}
\date{}

\begin{document}

\maketitle

\begin{abstract}
The final purpose of this paper is to show that, by inserting a convexity constraint on the cables of a suspension bridge, the torsional instability of the deck
appears at lower energy thresholds. Since this constraint is suggested by the behavior of real cables, this model appears more reliable than the
classical ones. Moreover, it has the advantage to reduce to two the number of degrees of freedom (DOF), avoiding to introduce the slackening mechanism
of the hangers. The drawback is that the resulting energy functional is extremely complicated, involving the convexification of unknown functions.
This paper is divided in two main parts. The first part is devoted to the study of these functionals, through classical methods of calculus of variations.
The second part applies this study to the suspension bridge model with convexified cables.\par \textbf{Keywords:} suspension bridges, instability, convexification.\par
\textbf{AMS Subject Classification (MSC2010)}: 35C31, 74B20.
\end{abstract}

\section{Introduction}
A suspension bridge is composed by four towers, a rectangular deck, two sustaining cables and a large number of hangers, see Figure \ref{ponte}
for a sketch of the side view. In the reference system $(O,x,y)$ the vertical displacement $w$ is positive downwards while $x$ is oriented
horizontally along the deck.
\begin{figure}[htbp]
	\centering
	\includegraphics[width=8cm]{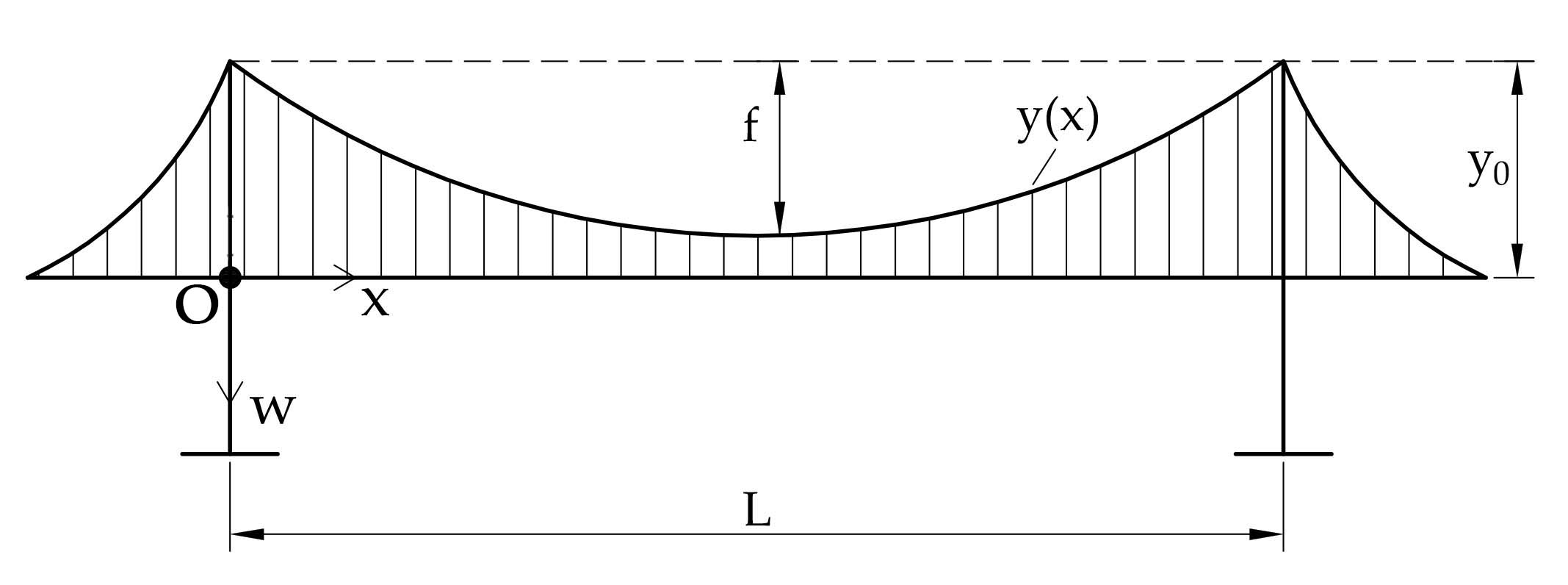}
	\caption{Sketch of the side view of a suspension bridge.}
	\label{ponte}
\end{figure}
We view the part of the deck between the towers as a degenerate plate occupying at rest the planar position $(0,L)\times(-\ell,\ell)$,
and composed by a central beam of length $L$ and by cross sections of length $2\ell\ll L$ whose midpoints lie on the beam. Each cross section
is free to rotate around the beam and to leave the horizontal position. The hangers link the endpoints of the cross sections (the long edges of
the plate) to the cables. This model is called {\em fish-bone} in \cite{berchio} and a linear version of it was suggested in
\cite[p.458, Chapter VI]{yakubovich}.\par
While the mathematical community \cite{berkovits,drabek1,drabek2,glover,holubova17,holubova,lazer,mckenna} is prone to take into account the
hangers slackening, the elastic deformation of the hangers is usually neglected in the engineering literature, this simplification being only partially
justified by precise studies on linearized models. The hangers are considered as rigid bars so that the deck and the cables undergo
the same movement. Nevertheless, this assumption is unreasonable since the hangers resist to traction but not to compression.
Slackening of the hangers was observed by Farquharson \cite[V-12]{TNB} during the Tacoma Narrows Bridge (TNB) collapse.
In the model with rigid hangers considered in \cite{falocchi2},
\begin{figure}[htbp]
	\centering
	\includegraphics[width=13cm]{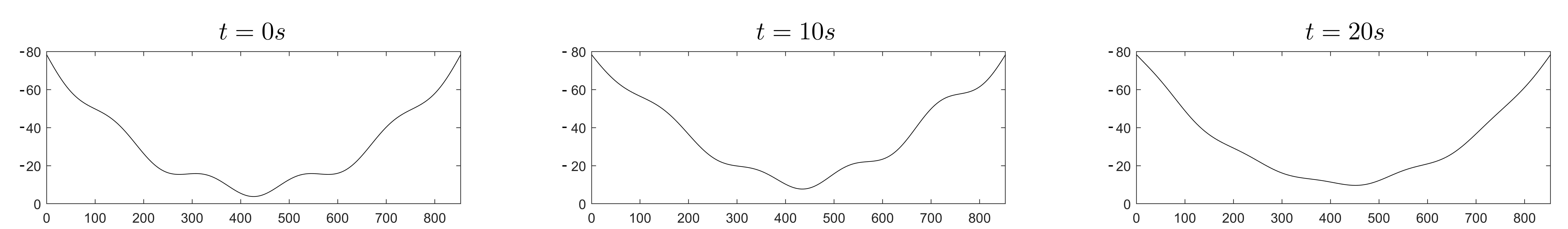}
	\caption{In case of rigid hangers, cable shape with the deck oscillating on the 9th longitudinal mode of initial amplitude $3.87$m,
		see \cite{falocchi2}.}
	\label{cavo shape}
\end{figure}
the cables displayed shapes similar to those depicted in Figure \ref{cavo shape},
that reproduce the shape every 10s on $[0,L]$ (with $L=853.44$m as for the TNB). The nonconvex shape becomes more evident as the energy in the system (the amplitude of oscillation) increases.
For the plots in Figure \ref{cavo shape}, the oscillating mode of the deck is the 9th and the initial amplitude is $3.87$m, which lies in a physical range.
Whence, the assumption of rigid hangers leads to unrealistic pictures never seen in real bridges. The cables are always convex due
to their mass and to the hangers slackening. A nonconvex configuration would also increase the tension of the cable, against the principle
of minimal energy. For these reasons, as in \cite{sperone}, we will assume that the actual shape of
the cables coincides with its \textit{convexified form}, namely the shape minimizing the length under the same load conditions. Acting only on
this geometric feature, we propose a two DOF model in which the slackening of the hangers is considered indirectly. The great advantage is that
we do not need an explicit nonlinearity describing the slackening mechanisms.\par
The drawback is that the convexity constraint leads to some technical mathematical difficulties, see \cite{bucur,buttazzo,cellina2}
and also \cite{alves,colasuonno} for different but related equations. This is why Section
\ref{s:convex} is devoted to general results related to the convexification of one dimensional
functions. In Theorem \ref{prop variaz gen} we compute the variation of functionals containing a convexification. This characterization is new and, in
our opinion, of independent interest with possible applications to more general variational problems. The convexification makes the energy
function non-differentiable: its variation yields a weak form of a \textit{system of partial differential inclusions} (see (\ref{eq weak sist2}) in Section
\ref{The system of partial differential inclusions}) for which the uniqueness of the solution is not expected. However, by exploiting the
peculiarity of the model, we are able to show that Galerkin approximation of the problem admits a unique classical solution, because
the obstruction to the differentiability of the energy is ruled out in a finite dimensional phase space. This suggests to introduce the class of
\textit{approximable solutions} of the problem, namely solutions that are the limit of the Galerkin subsequences, see Definition \ref{defapproximable} in
Section \ref{The system of partial differential inclusions}. This class of solutions will be physically justified and it will be shown that they
are representative of the full problem; moreover, within this class we are able to obtain existence results, see Theorem \ref{corol inclusione}.
This requires some particular attention due to the convexification and to the unusual behavior of test functions.\par
The torsional oscillations of the deck were the main cause for the TNB collapse \cite{TNB} and of several other collapses, see e.g.\ \cite{bookgaz}.
A new mathematical explanation for the origin of torsional
oscillations was given in \cite{argaz} through the introduction of suitable Poincar\'e maps: these oscillations appear whenever there is a large amount
of energy within the bridge and this happens due to the nonlinear behavior of structures. The model in \cite{argaz} was fairly simplified, but the very
same conclusion was subsequently reached in more sophisticated models \cite{Gazzola hyperb,Gazzola Torsion,berchio,capsoni,falocchi,falocchi2}.
A further purpose of the paper is to study the torsional instability of the deck through the model with convexified cables. To this end, we proceed numerically by
introducing a new algorithm dealing with the convexification at the beginning of each temporal iteration. We then numerically show that the slackening
mechanism hidden in the convexification of the cables yields energy thresholds of instability for high modes significantly smaller than in models where slackening is
neglected. This means that \textit{the slackening of the hangers must be taken into account because it gives lower thresholds of torsional instability}.\par
This paper is organized as follows. In Section \ref{s:convex} we
recall some features of the convexification of a function: we merely focus on the situation that applies to cables since the general setting
is fairly complicated. In Section \ref{Energy involved in the structure} we complete the analysis of the model
through a careful energy balance. This enables us to derive the differential inclusions and the differential equations related to approximable solutions in Section \ref{The variational problem}.
In Section \ref{Numerical analysis} we quote our numerical experiments and results. Sections \ref{proof prop3} and \ref{Proof of the Theorem 2} are devoted to the proofs of our results. Finally, in Section \ref{concl} we outline the conclusions.
Throughout this paper we denote the derivatives of a function $f=f(t)$ (depending only on $t$), of a function $g=g(x)$ (depending only on $x$)
and the partial derivatives of a function $w=w(x,t)$, respectively by
\begin{equation*}
\dot{f}=\dfrac{df}{dt}, \hspace{6mm} g'=\dfrac{dg}{dx}, \hspace{6mm}
w_{x}=\frac{\partial w}{\partial x}, \hspace{4mm} w_{t} =\frac{\partial w}{\partial t},
\end{equation*}
and similarly for higher order derivatives.

\section{The one-dimensional variational problem}
\label{s:convex}


Let $\mathcal{I}=(a,b)\subset\mathbb{R}$ be an open bounded interval. Since we are interested in the specific application of a \textit{real physical} cable, whose shape is described by a function in $H^2(\mathcal{I})\subset C^1(\overline{\mathcal{I}})$,
in all the paper we shall consider only profiles $f$ of class
\( C^1(\overline{\mathcal{I}})\),
avoiding more general assumptions on $f$.
%

We shall denote by $f^{**}$ the convex envelope (or convexification)
of $f$, i.e.\ the largest convex function satisfying $f^{**}\leq f$
in $\overline{\mathcal{I}}$.
Since $f$ is of class $C^1$, then also $f^{**}$ belongs to
$C^1(\overline{\mathcal{I}})$.

In the sequel a major role will be played by the maximal intervals
where $f^{**}$ is affine.
We denote by $K^i = [c^i, d^i]$, $i \in J_C$, the (possibly countable)
family of all these intervals.
Let $K_f\subset\mathcal{I}$ be the contact set of $f$, i.e.
\[
K_f := \{x\in\mathcal{I}:\
f(x) = f^{**}(x)\}\
\]
and note that $c^i, d^i \in K_f \cup \{a, b\}$.
The set $N := \mathcal{I}\setminus K_f$ is the union
of an at most countable family $I_i := (a^i, b^i)$, $i\in J$ of
open intervals.

We also use the notation
\[
\widetilde{K}_f := K_f \setminus\bigcup_{i\in J_C} K^i.
\]
Around points $x\in\widetilde{K}_f$ the function $f$ is strictly convex,
meaning that
\[
f^{**}(x) > f(x_0) + f'(x_0)\, (x-x_0),
\qquad
\forall x\in [a,b],\ x\neq x_0.
\]
More precisely, the set
$\{(x, f^{**}(x)):\ x \in \widetilde{K}_f\cup\{a,b\}\}$
coincides with the set
of exposed points of the epigraph of $f^{**}$, see
Section~\ref{proof prop3} for the precise definitions.
\subsection{The variation of functionals of convexified functions}
\label{variation of convexified function}

In order to study the behavior of the cables, we need to compute the variation of energies depending on the convexification of a function.
We deal with functionals such as $u\mapsto\int_{\mathcal{I}}[\Lambda(u)]^{**}dx$ with $\Lambda\in C^1(\mathbb{R})$ and
we need to compute the Gateaux derivative of such functionals. As we shall see,
in general these functionals are not Gateaux differentiable at every point.
To illustrate this phenomenon, let us consider first
the particular case $\Lambda(u) = u$.

\begin{prop}
	\label{p:prop variaz}
	Let $f\in C^1(\overline{\mathcal{I}})$ and let $f^{**}$, 
	$K^i = [c^i, d^i]$ ($i\in J_C$) and $\widetilde{K}_f$
	be as above.
	Let $\varphi\in C^{\infty}_c(\mathcal{I})$,
	and, for $i\in J_C$, consider the extended real-valued functions
	\begin{equation*}\label{f:phi}
	\varphi_i^\pm\colon K^i \to \overline{\mathbb{R}},
	\qquad
	\varphi_i^\pm(x) :=
	\begin{cases}
	\varphi(x) & x \in K^i \cap (K_f \cup \{a,b\}),\\
	\pm\infty & x \in K^i \setminus (K_f \cup \{a,b\}).
	\end{cases}
	\end{equation*}
	Then we have
	\begin{equation}
	\label{gateaux-lin}
	\lim\limits_{s\rightarrow 0^\pm}
	\int_{\mathcal{I}}\frac{(f+s\varphi)^{**}-f^{**}}{s}\hspace{1mm}dx
	= \int_{\mathcal{I}}\mathcal{J}^{\varphi}_\pm\hspace{1mm}dx,
	\end{equation}
	where
	\begin{equation}
	\label{zbar2}
	\mathcal{J}^\varphi_\pm(x) :=
	\begin{cases}
	\pm(\pm\varphi_i^\pm)^{**}(x) & x \in K^i,\ i\in J_C, \\
	\varphi(x) & x \in \widetilde{K}_f.
	\end{cases}
	\end{equation}
\end{prop}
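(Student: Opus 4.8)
The plan is to compute the one-sided directional derivative by analyzing the convexification $(f+s\varphi)^{**}$ locally on each of the three types of region: the intervals $I_i=(a^i,b^i)$ of $N$ where $f>f^{**}$, the maximal affine intervals $K^i$ of $f^{**}$, and the strictly convex contact set $\widetilde K_f$. On $\widetilde K_f$ the situation is easy: since $f$ is strictly convex near such points, for $|s|$ small the perturbed function $f+s\varphi$ remains strictly below its chords there, so these points still belong to the contact set of $(f+s\varphi)^{**}$; hence $(f+s\varphi)^{**}=f+s\varphi$ on a neighborhood, the difference quotient equals $\varphi$ pointwise, and one passes to the limit by dominated convergence. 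This accounts for the second branch of $\mathcal J^\varphi_\pm$.

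The heart of the matter is the behavior on a fixed affine interval $K^i=[c^i,d^i]$. Here $f^{**}$ is the affine function $\ell_i$ through $(c^i,f(c^i))$ and $(d^i,f(d^i))$, and $f\ge\ell_i$ on $K^i$ with equality exactly on $K^i\cap K_f$. After subtracting $\ell_i+s\varphi$ (an affine-plus-perturbation shift that does not affect which function is the convex envelope, up to adding the same affine piece back) one is reduced to convexifying $g_s:=f-\ell_i+s\varphi$ on $K^i$: this is a nonnegative $C^1$ function vanishing on the contact set, perturbed by $s\varphi$. The claim is that $\frac{1}{s}\big((f+s\varphi)^{**}-f^{**}\big)\to \pm(\pm\varphi_i^\pm)^{**}$ on $K^i$. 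The mechanism: at a contact point $x\in K^i\cap(K_f\cup\{a,b\})$ the perturbation lowers the graph by $s\varphi(x)$ and this is ``seen'' by the envelope; at a non-contact point $g_0>0$, so for small $|s|$ the value $g_s$ there is irrelevant to the lower convex envelope and effectively plays the role of $+\infty$ (for $s\to0^+$, reading off $+\varphi_i^+$) — this is exactly why $\varphi_i^\pm$ is set to $\pm\infty$ off the contact set. One shows that $(f+s\varphi)^{**}-f^{**}$, restricted to $K^i$ and divided by $s$, converges uniformly (or at least in $L^1(K^i)$, with a uniform bound) to the convex envelope on $K^i$ of the function equal to $\varphi$ on the contact set and $\pm\infty$ elsewhere, with the outer sign bookkeeping $\pm(\pm\cdot)^{**}$ encoding that for $s\to0^-$ one instead takes the concave envelope of $\varphi$ on the contact set. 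The main obstacle is making this limit rigorous and uniform over the (possibly countably many) intervals $K^i$ simultaneously, so that one may sum/integrate and pass to the limit under the integral; this requires a quantitative estimate showing that for $|s|$ small the new convexification cannot ``break'' the affine structure except by the $O(s)$ amount dictated by $\varphi$ on the contact set, uniformly in $i$, using the $C^1$ regularity and compactness of $\mathrm{supp}\,\varphi$.

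The remaining region $N=\bigcup_i I_i$ contributes nothing to the limit: on each $I_i$ one has $f>f^{**}$ with $f^{**}$ affine, and for $|s|$ small the envelope $(f+s\varphi)^{**}$ still coincides with an affine function on (a slightly perturbed copy of) $I_i$; the endpoints move by $O(s)$ and the difference $(f+s\varphi)^{**}-f^{**}$ is $O(s)$ uniformly, so the difference quotient is bounded and its $L^1$ contribution over $N$ is controlled — and in fact, after the analysis above, $N$ is absorbed into the affine intervals $K^i$ in the limit (an endpoint of $I_i$ that is in $\widetilde K_f$ vs.\ in some $K^j$ is handled by the respective case). Then I would assemble the pieces: $\int_{\mathcal I}=\sum_{i\in J_C}\int_{K^i}+\int_{\widetilde K_f}+\int_N$, apply the uniform convergence/domination on each, and invoke a global dominated-convergence argument — the dominating function being a fixed multiple of $\sup|\varphi|$ plus a term controlling the chord geometry — to interchange limit and integral, yielding exactly $\int_{\mathcal I}\mathcal J^\varphi_\pm\,dx$ with $\mathcal J^\varphi_\pm$ as in \eqref{zbar2}. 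I expect the bulk of the write-up to be the uniform estimate on a single $K^i$ and its uniformity across the family; the rest is routine once the local picture is pinned down.
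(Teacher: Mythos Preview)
Your overall architecture (pointwise analysis on the two pieces $\widetilde K_f$ and $\bigcup_i K^i$, then dominated convergence) matches the paper's. Two remarks, one minor and one substantive.

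\emph{Minor.} Your third region $N=\bigcup_i I_i$ is redundant: every $I_i$ sits inside some maximal affine interval $K^j$, so $\mathcal I=\widetilde K_f\cup\bigcup_{i\in J_C}K^i$ already, and the paper works with exactly this two-piece decomposition.

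\emph{Substantive gap.} Your treatment of $\widetilde K_f$ relies on the assertion that ``$f$ is strictly convex near such points, so for $|s|$ small the perturbed function $f+s\varphi$ remains strictly below its chords there, and these points still belong to the contact set of $(f+s\varphi)^{**}$.'' This is false in general. A point $x_0\in\widetilde K_f$ is an \emph{exposed} point of $\epi f^{**}$, which does not force $f$ to be convex in any neighborhood of $x_0$, nor does it force $x_0$ to stay in the contact set after perturbation. For a concrete failure take $f(x)=x^4$ on $[-1,1]$ (so $\widetilde K_f=(-1,1)$) and $\varphi$ equal to $-x^2$ near $0$: then $f+s\varphi$ has a strict local maximum at $0$ for every $s>0$, so $0\notin K_{f+s\varphi}$ and $(f+s\varphi)^{**}(0)<(f+s\varphi)(0)$. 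The pointwise limit of the difference quotient is still $\varphi(0)$, but your argument does not prove it.

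The paper closes this gap with a stability lemma: if $x_0\in\widetilde K_f$ and one writes, for each $s$, the Carath\'eodory representation
\[
x_0=(1-\lambda_s)a_s+\lambda_s b_s,\qquad f_s^{**}(x_0)=(1-\lambda_s)f_s(a_s)+\lambda_s f_s(b_s),
\]
then $a_s,b_s\to x_0$ as $s\to 0$ (because $x_0$ is exposed and $f_s\to f$ uniformly). Plugging this into
\[
\frac{f_s^{**}(x_0)-f^{**}(x_0)}{s}
=\frac{(1-\lambda_s)f(a_s)+\lambda_s f(b_s)-f(x_0)}{s}
+(1-\lambda_s)\varphi(a_s)+\lambda_s\varphi(b_s)
\]
gives $\liminf\ge\varphi(x_0)$ (the first term is $\ge 0$), while $\limsup\le\varphi(x_0)$ follows from $f_s^{**}\le f_s$. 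This is the missing ingredient; once you have it, the rest of your outline (including the $K^i$ analysis via $\varphi_i^\pm$ and the $\varepsilon$-neighborhood of the contact set) lines up with the paper's argument, and the paper also works only with pointwise convergence plus DCT rather than the uniform-in-$i$ estimates you were anticipating.
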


Proposition \ref{p:prop variaz}, whose proof is given in Section \ref{proof prop3}, has an important consequence.

\begin{corollary}
	\label{prop variaz}
	Under the same assumptions of Proposition~\ref{p:prop variaz},
	the functional $f \mapsto \int_{\mathcal{I}} f^{**}$
	is Gateaux--differentiable at $f$ if and only if
	\begin{equation}\label{noflat}
	\overline{K_f} = \overline{\widetilde{K}_f},\quad
	\text{i.e., $f > f^{**}$ on any open interval where $f^{**}$ is affine}.
	\end{equation}
	In this case,
	for every $\varphi\in C^{\infty}_c(\mathcal{I})$
	it holds that
	$\mathcal{J}^\varphi_+ = \mathcal{J}^\varphi_- =: \mathcal{J}^\varphi$,
	with
	\begin{equation}
	\label{zbar}
	\mathcal{J}^\varphi(x) :=
	\begin{cases}
	\varphi(a^i)+\dfrac{\varphi(b^i)-\varphi(a^i)}{b^i-a^i}(x-a^i) & x\in I^i,
	\ i \in J,\\
	\varphi(x) & x \in {K}_f.
	\end{cases}
	\end{equation}
\end{corollary}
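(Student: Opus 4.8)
The plan is to deduce everything from Proposition \ref{p:prop variaz} by analyzing when the two one-sided limits in \eqref{gateaux-lin} coincide. Gateaux differentiability at $f$ means precisely that $\lim_{s\to 0^+}$ and $\lim_{s\to 0^-}$ of the difference quotient agree for every $\varphi\in C_c^\infty(\mathcal I)$, i.e.\ $\int_{\mathcal I}\mathcal J^\varphi_+\,dx = \int_{\mathcal I}\mathcal J^\varphi_-\,dx$ for all $\varphi$. Since $\mathcal J^\varphi_\pm$ agree with $\varphi$ on $\widetilde K_f$, the only possible discrepancy lives on $\bigcup_{i\in J_C}K^i$, and on each $K^i$ the contribution is $\pm(\pm\varphi_i^\pm)^{**}$. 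So the first step is to prove: the functional is Gateaux differentiable at $f$ $\iff$ on every maximal affine interval $K^i=[c^i,d^i]$ one has $\int_{K^i}(\varphi_i^+)^{**}\,dx = -\int_{K^i}(-(-\varphi_i^-))^{**}\,dx = \int_{K^i}\bigl[-(-\varphi_i^-)^{**}\bigr]dx$ for all $\varphi$. A convex function and (minus) the concave envelope of the same boundary data coincide only when both equal the affine interpolant, which forces $\varphi_i^\pm\equiv$ the restriction of an affine function — hence $\varphi$ must be affine on $K^i$ and, crucially, $\varphi_i^\pm$ must take finite values $\varphi(x)$ on a \emph{dense} subset of $K^i$. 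If $K^i\setminus(K_f\cup\{a,b\})$ contains an open interval, then $\varphi_i^+$ is $+\infty$ there and its convex envelope drops below the affine interpolant (choose $\varphi$ bump-shaped), while $(\varphi_i^-)$ does the opposite; the integrals differ. This is the core dichotomy.

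The second step is to reconcile this with condition \eqref{noflat}. Recall $\widetilde K_f = K_f\setminus\bigcup_{i\in J_C}K^i$ and $N=\mathcal I\setminus K_f = \bigcup_{i\in J}I_i$. The condition $\overline{K_f}=\overline{\widetilde K_f}$ says exactly that $K_f$ carries no interior relative to the union of affine pieces — equivalently, on each open interval where $f^{**}$ is affine, the strict inequality $f>f^{**}$ holds except possibly on a set with empty interior; but since $f$ and $f^{**}$ are $C^1$, if $f=f^{**}$ at an interior point of an affine interval of $f^{**}$ then (using that $f^{**}\le f$ with equality and matching derivatives, and $f^{**}$ affine) one shows $f=f^{**}$ on a whole neighborhood, contradicting empty interior. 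So under \eqref{noflat} each $K^i$ satisfies $K^i\cap K_f\subseteq\{c^i,d^i\}$, i.e.\ $\varphi_i^\pm=\pm\infty$ on all of $(c^i,d^i)$, and then $(\pm\varphi_i^\pm)^{**}$ on $K^i$ is just the affine interpolation of $\varphi$ between the endpoints $c^i,d^i$ — the $+\infty$/$-\infty$ values are irrelevant to the convex envelope once the interior is entirely $\pm\infty$. This gives $\mathcal J^\varphi_+=\mathcal J^\varphi_-$ and, since the $K^i$ are exactly the closures of the $I_i$ under \eqref{noflat} (the contact set has no flat part, so every affine interval of $f^{**}$ is the convexification bridge over a component of $N$), formula \eqref{zbar} follows directly from \eqref{zbar2} with $a^i,b^i$ in place of $c^i,d^i$.

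Conversely, if \eqref{noflat} fails, there is an affine interval of $f^{**}$ on which $f\equiv f^{**}$ on a set with nonempty interior; pick an open subinterval $(\alpha,\beta)\subset K_f$ inside some $K^i$ and a test function $\varphi$ supported in $K^i$ that is, say, a positive bump concentrated strictly inside $(c^i,\alpha)$. Then $\varphi_i^+$ equals $+\infty$ on part of $(c^i,\alpha)$ but equals the (nonnegative) bump on the rest; computing $(\varphi_i^+)^{**}$ versus $-(-\varphi_i^-)^{**}$ on $K^i$ and integrating shows the two differ — so the functional is not Gateaux differentiable.

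\medskip

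The step I expect to be the main obstacle is the precise verification that when $K^i\setminus(K_f\cup\{a,b\})$ has nonempty interior the integrals $\int_{K^i}(\pm\varphi_i^\pm)^{**}$ genuinely separate for a suitable $\varphi$: one must handle the extended-real-valued convex-envelope computation carefully (a convex function dominated by something that is $+\infty$ on an interval but finite with the same boundary values elsewhere is \emph{not} forced to the affine interpolant — it can follow $\varphi$ on the finite part), construct the test function explicitly, and check the strict inequality of the two integrals. The regularity argument in the second step (that $C^1$ contact with an affine function at an interior point propagates to a neighborhood) is routine but needs to be stated cleanly to make the equivalence with \eqref{noflat} airtight.
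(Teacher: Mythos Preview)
Your overall strategy---deduce everything from Proposition~\ref{p:prop variaz} by asking when $\int_{\mathcal I}\mathcal J^\varphi_+=\int_{\mathcal I}\mathcal J^\varphi_-$ for every $\varphi$---is exactly the right one, and it is also what the paper intends (the paper states the corollary as an immediate consequence of Proposition~\ref{p:prop variaz} and gives no separate proof, so your plan is in fact more detailed than the original). The ``if'' direction is fine: under \eqref{noflat} each $K^i$ meets $K_f$ only at its endpoints, so $\varphi_i^\pm=\pm\infty$ on the whole interior and both $(\varphi_i^+)^{**}$ and $-(-\varphi_i^-)^{**}$ collapse to the affine interpolant of $\varphi$ between $c^i=a^i$ and $d^i=b^i$, yielding \eqref{zbar}.

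There is, however, a genuine error in your ``second step'' and it propagates into your converse. You claim that if $f=f^{**}$ at a single interior point of an affine piece then, by $C^1$ tangency, $f=f^{**}$ on a whole neighbourhood. This is false: take $\mathcal I=(-2,2)$ and $f(x)=x^2(x^2-1)^2$; then $f\in C^1$, $f^{**}=0$ on $[-1,1]$, and $f=f^{**}$ at the isolated interior point $x_0=0$. So the failure of \eqref{noflat} does \emph{not} guarantee an open subinterval $(\alpha,\beta)\subset K_f\cap K^i$ as your converse construction assumes. The fix is simple and actually makes the argument cleaner: a single interior contact point $x_0\in K_f\cap(c^i,d^i)$ already breaks differentiability. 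Choose $\varphi\in C_c^\infty(\mathcal I)$ supported in $(c^i,d^i)$ with $\varphi(x_0)>0$. Then $\varphi_i^+(c^i)=\varphi_i^+(d^i)=0$, $\varphi_i^+(x_0)=\varphi(x_0)>0$, and $(\varphi_i^+)^{**}\equiv 0$ on $K^i$ (a convex function $\le 0$ at the endpoints is $\le 0$ throughout, while the constant $0$ lies below $\varphi_i^+$). On the other hand $-(-\varphi_i^-)^{**}$ is the tent function through $(c^i,0),(x_0,\varphi(x_0)),(d^i,0)$, whose integral is $\tfrac12(d^i-c^i)\varphi(x_0)>0$. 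Hence the two one-sided limits in \eqref{gateaux-lin} differ. Drop the propagation claim entirely and run the converse with this single-point construction; then your plan is complete.
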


\begin{remark}
	When condition (\ref{noflat}) is satisfied,
	the intervals $I^i$
	coincide with the interior of the intervals $K^i$, i.e. one has
	that $a^i = c^i$ and $b^i = d^i$ for every $i\in J$.
	Also note that if $f\in C^1(\overline{\mathcal{I}})$ and $\varphi\in C^{\infty}_c(\mathcal{I})$, then $\mathcal{J}^\varphi_\pm, \mathcal{J}^\varphi\in W^{1,1}(0,L)$.	
	\label{rmk 2.5}
\end{remark}

If $f$ is convex the shape of $\varphi$ is maintained and we have the classical Gateaux derivative. Proposition \ref{p:prop variaz} states that the shape
of the test function $\varphi$ may change if the variation involves a convexification, see Figure \ref{zf}b). This possible change of $\varphi$ makes the
problem very challenging and is the price for having a physically reliable modeling of the cables.
The next example explains why assumption \eqref{noflat} is necessary in order to have the Gateaux--differentiability.

\begin{example}
	For some $\mu, \upsilon\in\mathbb{R}$, take $f(x)=\mu x+\upsilon$ on 
	$\mathcal{I}=(-2,2)$ and let
	\begin{equation*}
	\varphi(x)=	e^{\frac{1}{x^2-1}}\mbox{ if }x\in (-1,1),\qquad\varphi(x)=0\mbox{ if }x\in\overline{\mathcal{I}}\setminus (-1,1),
	\qquad\qquad  \big(\varphi\in C_c^\infty(\mathcal{I})\big).
	\end{equation*}
	The limits (\ref{gateaux-lin}) depend on the sign of $s$. Indeed,
	\begin{equation}
	\label{gateaux2}
	\lim\limits_{s\rightarrow 0\pm}\int_{\mathcal{I}}\frac{(s\varphi)^{**}}{s}\hspace{1mm}dx
	= \int_{\mathcal{I}}\pm(\pm\varphi)^{**}dx
	\end{equation}
	and if $s>0$ we have $(s\varphi)^{**}\equiv 0$ so that (\ref{gateaux2}) vanishes,
	while if $s<0$ we have that $(s\varphi)^{**} = s[-(-\varphi)^{**}]$ and we obtain the point $\zeta\approx 0.25$, such that
	\begin{equation*}
	-(-\varphi)^{**}(x)=\varphi(x)\mbox{ if}|x|\in[0,\zeta],\qquad-(-\varphi)^{**}(x)=\dfrac{e^{\frac{1}{\zeta^2-1}}}{\zeta-2}(|x|-2)\mbox{ if }|x|\in(\zeta,2),
	\end{equation*}
	see Figure \ref{esempio}. It is readily seen that the right and left limits of \eqref{gateaux2} are different,
	implying the non-existence of the Gateaux derivative.
	\hfill $\square$
\begin{figure}[h]
	\centering
	\includegraphics[width=125mm]{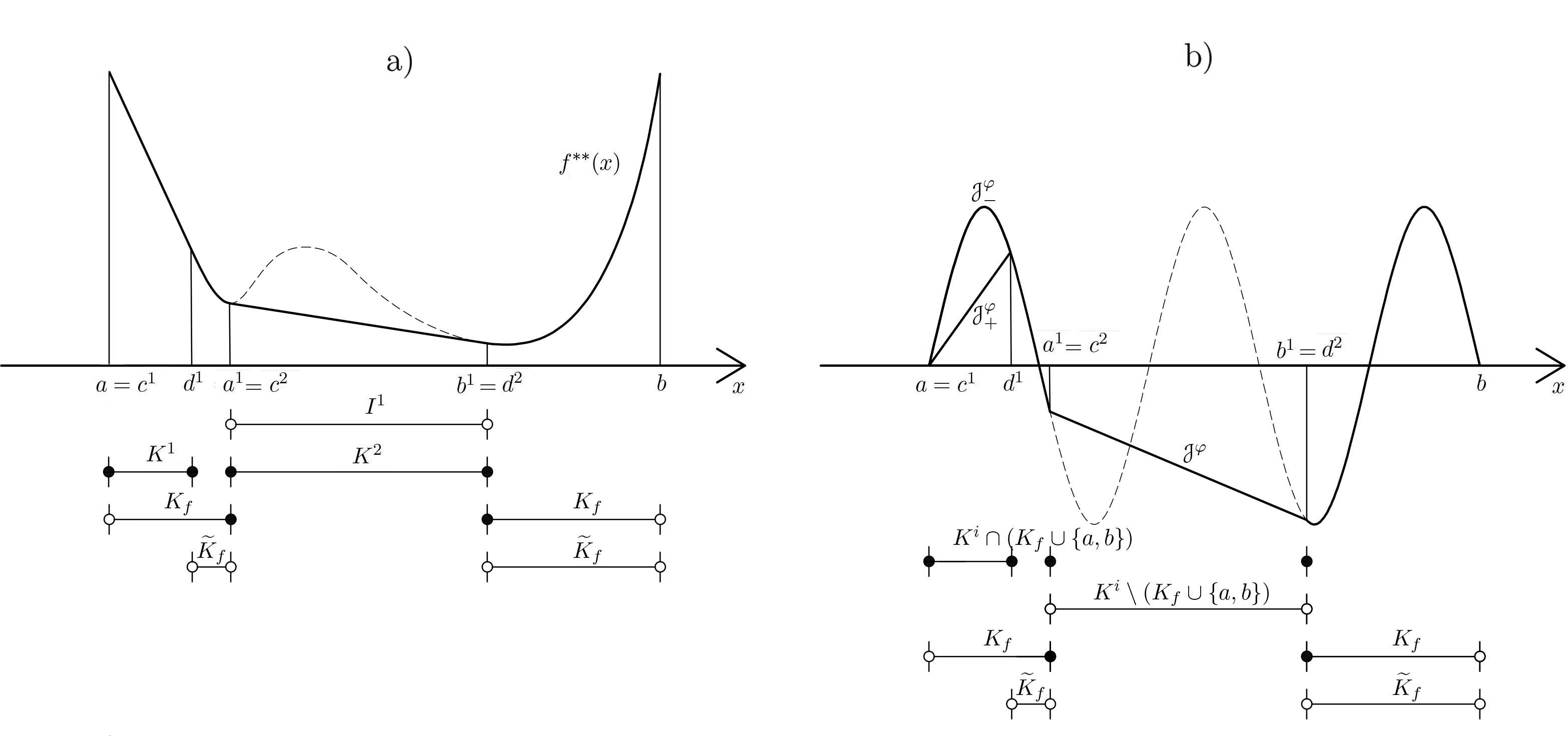}
	\caption{a) Convexification of a function $f$. b) The corresponding $\mathcal{J}^\varphi_\pm(x)$, $\mathcal{J}^\varphi(x)$ as in (\ref{zbar2}), (\ref{zbar}).}
	\label{zf}
\end{figure}
	\begin{figure}[htbp]
		\centering
		\includegraphics[width=8cm]{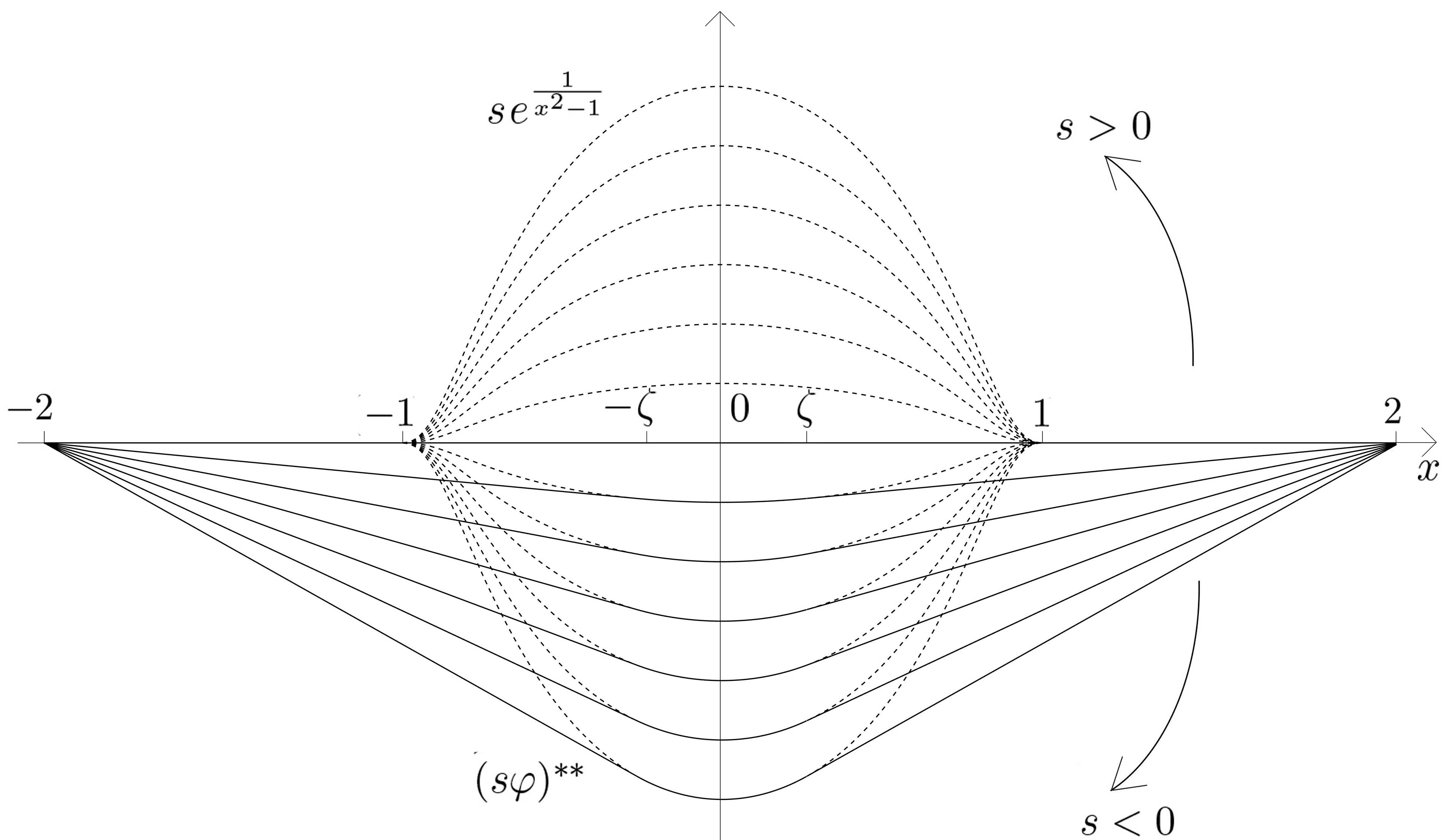}
		\caption{Plot of $(s\varphi)^{**}$ and $s\varphi$ (dashed), for some values of the parameter $s$.}
		\label{esempio}
	\end{figure}
\end{example}

The main result of this section is the following generalization of Proposition \ref{p:prop variaz}:

\begin{teo}
	Consider $u\in C^1(\overline{\mathcal{I}},\mathbb{R})$, $\Lambda\in C^1(\mathbb{R})$ and let $\Lambda'$ be its derivative.
	Let $f:=\Lambda\circ u:\overline{\mathcal{I}}\rightarrow \mathbb{R}$ and let $f^{**}$, $I^i$ ($i\in J$), $K^i$ ($i\in J_C$) and $\widetilde{K}_f$
	be as above.
	Furthermore, assume that $f$ satisfies \eqref{noflat}. Then, for all
	$\varphi\in C^{\infty}_c(\mathcal{I})$, we have
	\begin{equation*}\label{zbar gen}
	\lim\limits_{s\rightarrow0}
	\int_{\mathcal{I}}\frac{[\Lambda(u+s\varphi)]^{**}-[\Lambda(u)]^{**}}{s}\hspace{1mm}dx=\int_{\mathcal{I}}\mathcal{G}^{u,\varphi}\hspace{1mm}dx,
	\end{equation*}
	where \small
	\begin{equation*}
	\mathcal{G}^{u,\varphi}(x):=
	\begin{cases}
	\varphi(a^i)\Lambda'\big(u(a^i)\big)+\dfrac{\varphi(b^i)\Lambda'\big(u(b^i)\big)-\varphi(a^i)\Lambda'\big(u(a^i)\big)}{b^i-a^i}(x-a^i) &
	x\in I^i,\\
	\varphi(x)\Lambda'\big(u(x)\big) & x\in\overline{\mathcal{I}}\setminus \bigcup\limits_{i\in J} I^i.
	\end{cases}
	\label{Gcx}
	\end{equation*}
	\label{prop variaz gen}
\end{teo}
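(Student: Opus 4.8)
The plan is to reduce the statement to the linear case already settled in Proposition~\ref{p:prop variaz} and Corollary~\ref{prop variaz}. Set $f:=\Lambda\circ u$ and $\psi:=\varphi\,(\Lambda'\circ u)$; since $u\in C^1(\overline{\mathcal{I}})$ and $\Lambda\in C^1(\mathbb{R})$, the function $f$ is of class $C^1$ and satisfies \eqref{noflat} by hypothesis, while $\psi$ is continuous with $\mathrm{supp}\,\psi\subseteq\mathrm{supp}\,\varphi$ compact in $\mathcal{I}$. The heuristic is that, to first order in $s$, the perturbed function $\Lambda(u+s\varphi)$ behaves like $f+s\psi$, so the Gateaux limit in the statement should coincide with $\lim_{s\to0}\int_{\mathcal{I}}\frac{(f+s\psi)^{**}-f^{**}}{s}\,dx$; Corollary~\ref{prop variaz} (applicable since $f$ satisfies \eqref{noflat}) identifies this with $\int_{\mathcal{I}}\mathcal{J}^{\psi}\,dx$, and a direct inspection of \eqref{zbar}, using $\psi(a^i)=\varphi(a^i)\Lambda'(u(a^i))$, $\psi(b^i)=\varphi(b^i)\Lambda'(u(b^i))$ and $\psi=\varphi\,(\Lambda'\circ u)$ on $K_f$, shows that $\mathcal{J}^{\psi}=\mathcal{G}^{u,\varphi}$ almost everywhere.

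First I would make the first-order reduction rigorous. By the mean value theorem, for every $x\in\overline{\mathcal{I}}$ and every small $s$ there is $\xi$ between $u(x)$ and $u(x)+s\varphi(x)$ with $\Lambda(u+s\varphi)-\Lambda(u)=s\varphi\,\Lambda'(\xi)$; since $u+s\varphi\to u$ uniformly as $s\to0$ and $\Lambda'$ is uniformly continuous on a compact neighbourhood of $u(\overline{\mathcal{I}})$, one gets
\[
\big\|\Lambda(u+s\varphi)-(f+s\psi)\big\|_{C^0(\overline{\mathcal{I}})}=o(|s|)\qquad(s\to0).
\]
Next I would use the elementary $C^0$-stability of the convex envelope: since $(g+c)^{**}=g^{**}+c$ for every constant $c$ and $g\mapsto g^{**}$ is order preserving, $\|g_1-g_2\|_{C^0(\overline{\mathcal{I}})}\le\delta$ implies $\|g_1^{**}-g_2^{**}\|_{C^0(\overline{\mathcal{I}})}\le\delta$. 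Applying this with $g_1=\Lambda(u+s\varphi)$, $g_2=f+s\psi$ and dividing by $s$ yields
\[
\Big\|\frac{[\Lambda(u+s\varphi)]^{**}-f^{**}}{s}-\frac{(f+s\psi)^{**}-f^{**}}{s}\Big\|_{C^0(\overline{\mathcal{I}})}=o(1)\qquad(s\to0),
\]
so the limit in the statement exists if and only if $\lim_{s\to0}\int_{\mathcal{I}}\frac{(f+s\psi)^{**}-f^{**}}{s}\,dx$ does, with the same value.

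It then remains to compute this last limit, and here I would approximate $\psi$. Choose $\psi_n\in C^{\infty}_c(\mathcal{I})$ with $\psi_n\to\psi$ in $C^0(\overline{\mathcal{I}})$ and all $\mathrm{supp}\,\psi_n$ contained in a fixed compact subset of $\mathcal{I}$ (for instance a mollification of $\psi$ times a fixed cut-off). Since $f$ satisfies \eqref{noflat}, Corollary~\ref{prop variaz} applies to the test function $\psi_n$ and gives $\lim_{s\to0}\int_{\mathcal{I}}\frac{(f+s\psi_n)^{**}-f^{**}}{s}\,dx=\int_{\mathcal{I}}\mathcal{J}^{\psi_n}\,dx$. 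By the $C^0$-stability of the convex envelope again,
\[
\Big|\int_{\mathcal{I}}\frac{(f+s\psi)^{**}-f^{**}}{s}\,dx-\int_{\mathcal{I}}\frac{(f+s\psi_n)^{**}-f^{**}}{s}\,dx\Big|\le|\mathcal{I}|\,\|\psi-\psi_n\|_{C^0(\overline{\mathcal{I}})}
\]
for all $s\neq0$, so the convergence in $n$ is uniform in $s$; moreover the explicit formula \eqref{zbar} shows $\mathcal{J}^{\psi_n}\to\mathcal{J}^{\psi}$ in $C^0(\overline{\mathcal{I}})$, whence $\int_{\mathcal{I}}\mathcal{J}^{\psi_n}\to\int_{\mathcal{I}}\mathcal{J}^{\psi}$. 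A standard exchange-of-limits argument then gives $\lim_{s\to0}\int_{\mathcal{I}}\frac{(f+s\psi)^{**}-f^{**}}{s}\,dx=\int_{\mathcal{I}}\mathcal{J}^{\psi}\,dx=\int_{\mathcal{I}}\mathcal{G}^{u,\varphi}\,dx$, which is the claim.

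The main obstacle is precisely this double limiting process: one must pass from the smooth test functions $\psi_n$, to which Proposition~\ref{p:prop variaz} and Corollary~\ref{prop variaz} apply, to the merely continuous $\psi=\varphi\,(\Lambda'\circ u)$, and at the same time replace $\Lambda(u+s\varphi)$ by its linearization $f+s\psi$, all while $s\to0$. The $C^0$-Lipschitz bound for $g\mapsto g^{**}$, which is uniform in $s$, is what makes both interchanges legitimate; it is also worth noting that Corollary~\ref{prop variaz} requires \eqref{noflat} only for $f$, which holds by hypothesis regardless of the test function used.
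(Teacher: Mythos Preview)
Your argument is correct and complete. The linearization $\Lambda(u+s\varphi)=f+s\psi+o(|s|)$ with $\psi=\varphi\,\Lambda'(u)$, combined with the $C^0$-Lipschitz estimate $\|g_1^{**}-g_2^{**}\|_\infty\le\|g_1-g_2\|_\infty$, cleanly reduces the problem to the linear Gateaux derivative computed in Corollary~\ref{prop variaz}. The approximation of the merely continuous $\psi$ by $\psi_n\in C^\infty_c(\mathcal I)$ is handled correctly: the bound $|\mathcal I|\,\|\psi-\psi_n\|_\infty$ on the difference of the quotient integrals is uniform in $s$, and $\|\mathcal J^{\psi_n}-\mathcal J^{\psi}\|_\infty\le\|\psi_n-\psi\|_\infty$ follows directly from~\eqref{zbar}, so the Moore--Osgood exchange of limits is justified. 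The identification $\mathcal J^{\psi}=\mathcal G^{u,\varphi}$ is immediate.

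This is a genuinely different route from the paper's. The paper does not reduce to the linear case: it simply remarks that, since $f$ satisfies~\eqref{noflat}, one can rerun the pointwise-convergence argument of Proposition~\ref{p:prop variaz} (the part proving~\eqref{f:conv1}) with $f_s=\Lambda(u+s\varphi)$ in place of $f+s\varphi$, and omits the details. Your approach is more modular --- it uses Corollary~\ref{prop variaz} as a black box and isolates the only new ingredient, the first-order Taylor expansion of $\Lambda$, via the $C^0$-stability of the envelope. The paper's approach, by contrast, would re-enter the proof of Proposition~\ref{p:prop variaz} and repeat the Lemma~\ref{l:stab}/dominated-convergence machinery with the Taylor expansion inserted at the step~\eqref{f:maindis}. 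Your version has the advantage of making explicit that only $C^0$ information about $\psi$ is needed, at the cost of the extra (but routine) approximation-and-interchange step.
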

\normalsize
Theorem \ref{prop variaz gen}, whose proof is given in Section \ref{proof prop3}, has an instructive application.
\begin{example}
	If $\theta\in C^1(\overline{\mathcal{I}})$, $\Lambda(\theta)=\sin\theta$, $\psi\in C^\infty_c(\mathcal{I})$, Theorem \ref{prop variaz gen} yields
	\begin{equation*}
	\lim\limits_{s\rightarrow 0}\int_{\mathcal{I}}\frac{[\sin(\theta+s\psi)]^{**}-[\sin\theta]^{**}}{s}\hspace{1mm}dx=\int_{\mathcal{I}}\mathcal{G}^{\theta,\psi}\hspace{1mm}dx,
	\label{sintheta}
	\end{equation*}
	with
	\small
	\begin{equation}
	\mathcal{G}^{\theta,\psi}(x):=
	\begin{cases}
	\psi(a^i)\cos\big(\theta(a^i)\big)+\dfrac{\psi(b^i)\cos\big(\theta(b^i)\big)-\psi(a^i)\cos\big(\theta(a^i)\big)}{b^i-a^i}(x-a^i) & x\in I^i,\\\
	\psi(x)\cos\big(\theta(x)\big) & x\in\overline{\mathcal{I}}\setminus \bigcup\limits_{i\in J} I^i.
	\end{cases}
	\label{Gthetaz}
	\end{equation}
	\label{es theta}
\end{example}
\normalsize

\subsection{Properties of the projection on the cone of convex functions}
\label{Properties of the projection on the cone of convex functions}
\def\Fss{F^{**}}
\def\Gss{G^{**}}
In this section we give some properties of convexified functions that we will use in the sequel to obtain a priori estimates. In the sequel we denote by $\|\cdot\|_p$ the norm related to the Lebesgue space  $L^p(a,b)$ with $1\leq p\leq \infty$. All the proofs are given in Section \ref{proof prop3}.
\begin{prop}\label{l:base}
	Let $T\colon C^0([a,b]) \to C^0([a,b])$ be the operator defined by
	\begin{equation}\label{f:defT}
	Tf := (\Fss)',	\qquad	\text{where}\ F(x):= \int_a^x f(y)\, dy,	\quad y\in [a,b].
	\end{equation}
	Then
	\begin{eqnarray}
	\int_a^b |Tf - Tg| \leq	\int_a^b |f-g|	\qquad \forall f,g\in C^0([a,b]),\label{f:th} \\
	\|Tf - Tg\|_1	\leq \|f-g\|_1	\qquad	\forall f,g\in L^1(a,b). 	\label{f:th2}
	\end{eqnarray}
\end{prop}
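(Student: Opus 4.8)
The plan is to establish the estimate \eqref{f:th} for continuous data by means of a min--max representation of $T$, and then to deduce \eqref{f:th2} from it by density. First observe that $T$ maps $C^0([a,b])$ into $C^0([a,b])$: if $f\in C^0$ then $F\in C^1$, hence $\Fss\in C^1$ by the regularity of the convex envelope recalled above, so $Tf=(\Fss)'\in C^0$. The core of the argument is the classical ``greatest convex minorant'' representation
\[
Tf(x)=\sup_{u\in[a,x]}\ \inf_{v\in[x,b]}\ \frac{F(v)-F(u)}{v-u}
=\sup_{u\in[a,x]}\ \inf_{v\in[x,b]}\ \frac1{v-u}\int_u^v f(y)\,dy ,
\]
where the quotient is read as $f(x)$ when $u=v=x$. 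To prove it I would fix $x$, set $\lambda:=(\Fss)'(x)$, take the supporting line $\ell$ of $\Fss$ at $x$ (of slope $\lambda$), and let $[u_0,v_0]\ni x$ be the contact set of $\ell$ with $\Fss$. Since the endpoints of the maximal affine intervals of $\Fss$ lie in $K_F\cup\{a,b\}$, one has $F(u_0)=\ell(u_0)$ and $F(v_0)=\ell(v_0)$. Using $\ell\le\Fss\le F$ on $[a,b]$ together with these two identities, one checks that $\frac{F(v)-F(u_0)}{v-u_0}\ge\lambda$ for every $v\ge x$, with equality at $v=v_0$, so the right--hand side above is $\ge\lambda$; and that, for every $u\le x$, $\inf_{v\ge x}\frac{F(v)-F(u)}{v-u}\le\frac{F(v_0)-F(u)}{v_0-u}\le\lambda$, so the right--hand side is $\le\lambda$. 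The degenerate cases $x\in\{a,b\}$ and $x\in\widetilde K_F$, in which $u_0=v_0=x$ and the supremum/infimum need not be attained, are handled in the same spirit by passing to the limit as $u\uparrow x$ and $v\downarrow x$.

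From this representation two properties follow at once. If $f\le g$ on $[a,b]$, then $\frac1{v-u}\int_u^v f\le\frac1{v-u}\int_u^v g$ for all $a\le u<v\le b$, hence $Tf\le Tg$: the operator $T$ is order preserving. Moreover $\int_a^b Tf=\Fss(b)-\Fss(a)=F(b)-F(a)=\int_a^b f$, so $T$ preserves the integral. These are precisely the hypotheses of the Crandall--Tartar lemma, whose elementary argument I would reproduce here: for $f,g\in C^0([a,b])$ one has $f\wedge g\in C^0([a,b])$ with $f\wedge g\le f$ and $f\wedge g\le g$, so $T(f\wedge g)\le Tf\wedge Tg$ by monotonicity, and using $(Tf-Tg)^+=Tf-Tf\wedge Tg$ together with the conservation of the integral,
\[
\int_a^b(Tf-Tg)^+=\int_a^b Tf-\int_a^b Tf\wedge Tg\le\int_a^b Tf-\int_a^b T(f\wedge g)=\int_a^b f-\int_a^b f\wedge g=\int_a^b(f-g)^+ .
\]
Interchanging $f$ and $g$ and adding the two inequalities gives $\int_a^b|Tf-Tg|\le\int_a^b|f-g|$, which is \eqref{f:th}.

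To reach \eqref{f:th2} I would argue by density. For $f\in L^1(a,b)$ the primitive $F$ is absolutely continuous and $\Fss$ is convex on $[a,b]$, so $(\Fss)'\in L^1(a,b)$ and $Tf:=(\Fss)'$ is well defined. Given $f,g\in L^1(a,b)$, choose $f_n,g_n\in C^0([a,b])$ with $f_n\to f$ and $g_n\to g$ in $L^1$. Then $F_n\to F$ and $G_n\to G$ uniformly on $[a,b]$ because $\|F_n-F\|_\infty\le\|f_n-f\|_1$, and therefore $F_n^{**}\to\Fss$ and $G_n^{**}\to\Gss$ uniformly, since the convexification is $1$--Lipschitz for $\|\cdot\|_\infty$ (if $F_n\le F+c$ then $F_n^{**}\le\Fss+c$, and symmetrically with the roles of $F_n$ and $F$ exchanged); hence $(F_n^{**})'\to(\Fss)'$ and $(G_n^{**})'\to(\Gss)'$ in $\mathcal D'(a,b)$. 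On the other hand, by \eqref{f:th} the sequences $(Tf_n)$ and $(Tg_n)$ are Cauchy in $L^1$, hence convergent in $L^1$, necessarily to $(\Fss)'=Tf$ and $(\Gss)'=Tg$. Letting $n\to\infty$ in $\int_a^b|Tf_n-Tg_n|\le\int_a^b|f_n-g_n|$ yields \eqref{f:th2}.

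The main obstacle is the min--max representation itself: proving it rigorously, and in particular covering the endpoints of $[a,b]$ and the points of strict convexity of $\Fss$, where the supremum and the infimum may fail to be attained and one must argue by a limiting procedure. Once that formula is in hand, the monotonicity, the conservation of mass, the Crandall--Tartar step and the density argument are all routine.
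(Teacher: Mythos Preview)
Your proposal is correct and follows the same overall architecture as the paper's proof: show that $T$ is order preserving and integral preserving, invoke the Crandall--Tartar lemma to get \eqref{f:th}, and then extend to $L^1$ by density using that $(Tf_n)$ is Cauchy in $L^1$. The density step is essentially identical in both arguments (the paper identifies the limit via Rockafellar's pointwise convergence theorem for derivatives of convex functions rather than via distributions, but this is cosmetic).

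The genuine difference lies in how monotonicity $f\le g\Rightarrow Tf\le Tg$ is obtained. You derive it from a sup--inf representation
\[
Tf(x)=\sup_{u\in[a,x]}\inf_{v\in[x,b]}\frac{1}{v-u}\int_u^v f,
\]
which makes the monotonicity a one-line consequence but shifts the work to justifying the formula itself (including the strictly convex points and the endpoints, as you acknowledge). The paper instead argues directly: after two reductions (modifying $f,g$ near the endpoints so that $f(a)=Tf(a)$, $f(b)=Tf(b)$ and likewise for $g$, and replacing $g$ by $g+\varepsilon$ so that $f<g$ strictly), it assumes $Tf(x_0)=Tg(x_0)$ at a first crossing point and derives a contradiction through a four-case analysis depending on whether $x_0$ belongs to the contact sets $K_F$, $K_G$. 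Your route is cleaner once the representation is in hand and gives a structural reason for monotonicity; the paper's route avoids any auxiliary formula but is more ad hoc and requires the boundary reduction to make the case analysis go through. Either way, the Crandall--Tartar step (which you spell out explicitly while the paper simply cites) and the $L^1$ extension are the same.
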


Proposition \ref{l:base} shows that the map $T$ is Lipschitzian from $L^1$ to $L^1$ and it enables us to prove that the convexification is Lipschitzian
from $W^{1,1}_0$ to $W^{1,1}_0$.

\begin{corollary}
	The operator $P\colon W^{1,1}_0(a,b) \to W^{1,1}_0(a,b)$, defined by
	$P[F] := \Fss$, is Lipschitz continuous.
	More precisely,
	\[
	\|\Fss - \Gss\|_{W^{1,1}}
	\leq \left(\frac{b-a}{2} + 1\right) \|F' - G'\|_1
	\qquad
	\forall F,G\in W^{1,1}_0(a,b).
	\]
\end{corollary}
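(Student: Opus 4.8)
The strategy is to reduce everything to the contraction estimate \eqref{f:th2} for $T$ together with an elementary Poincar\'e inequality on $W^{1,1}_0(a,b)$. The key observation is that $T$ and $P$ are conjugate under ``integrate/differentiate'': if $F\in W^{1,1}_0(a,b)$ then $F$ is absolutely continuous with $F(a)=0$, hence $F(x)=\int_a^x F'(y)\,dy$, so that by the definition \eqref{f:defT} one has $T(F')=(\Fss)'$. This already yields the $W^{1,1}$ well-posedness of $P$: $\Fss$ is convex and, being the convex envelope of a continuous function on a compact interval, it is continuous on $[a,b]$ (indeed bounded between $\min_{[a,b]}F$ and $F$); a continuous convex function on a compact interval is absolutely continuous, so $\Fss\in W^{1,1}(a,b)$; finally the convex envelope of a continuous function agrees with that function at the endpoints of the interval, whence $\Fss(a)=F(a)=0$ and $\Fss(b)=F(b)=0$, i.e.\ $\Fss\in W^{1,1}_0(a,b)$.

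Next I would estimate separately the two parts of the $W^{1,1}$-norm of $h:=\Fss-\Gss\in W^{1,1}_0(a,b)$. For the derivative, using $h'=(\Fss)'-(\Gss)'=T(F')-T(G')$ and applying \eqref{f:th2} with $f=F'$, $g=G'$, gives $\|h'\|_1\le\|F'-G'\|_1$. For the function itself, I would invoke the bound $\|h\|_1\le\frac{b-a}{2}\,\|h'\|_1$, valid for every $h\in W^{1,1}_0(a,b)$: writing both $h(x)=\int_a^x h'$ and $h(x)=-\int_x^b h'$ one gets $2|h(x)|\le\int_a^b|h'|$ for all $x\in(a,b)$, and integrating over $(a,b)$ yields the claim. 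Combining, $\|h\|_1\le\frac{b-a}{2}\|F'-G'\|_1$, and therefore
\[
\|\Fss-\Gss\|_{W^{1,1}}=\|h\|_1+\|h'\|_1\le\Big(\frac{b-a}{2}+1\Big)\|F'-G'\|_1,
\]
which is the claimed inequality; the Lipschitz continuity of $P$ follows at once.

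I expect the only genuinely delicate point to be the endpoint identity $\Fss(a)=F(a)$, $\Fss(b)=F(b)$: this is precisely what prevents $P$ from leaving $W^{1,1}_0$ and is what allows one to invoke the Poincar\'e inequality on $h$. It can be established by recalling that the epigraph of $\Fss$ is the closed convex hull of the epigraph of $F$ and observing, using the continuity of $F$ and the compactness of $[a,b]$, that a point of the form $(a,t)$ can belong to this hull only if $t\ge F(a)$ (and symmetrically at $b$); combined with $\Fss\le F$ this forces equality at the endpoints. Everything else is the routine bookkeeping carried out above.
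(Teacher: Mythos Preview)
Your argument is correct and is exactly the derivation the paper has in mind: the corollary is stated without proof as an immediate consequence of Proposition~\ref{l:base}, and your identification $T(F')=(\Fss)'$ together with \eqref{f:th2} and the elementary Poincar\'e bound $\|h\|_1\le\frac{b-a}{2}\|h'\|_1$ on $W^{1,1}_0(a,b)$ is precisely how one reads off the explicit constant. Your discussion of the endpoint identity $\Fss(a)=F(a)$, $\Fss(b)=F(b)$ is the only nontrivial point and you handle it correctly.
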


In the sequel we denote by $\mathcal{J}^\varphi_F$, $\mathcal{G}^{F,\psi}$ and $\mathcal{J}^\varphi_G$, $\mathcal{G}^{G,\psi}$ the corresponding functions associated respectively to $F$ and $G$ as in \eqref{zbar} and \eqref{Gthetaz}. About the regularity of $\mathcal{J}^\varphi_F$ and $\mathcal{J}^\varphi_G$ we refer to Remark \ref{rmk 2.5} and similarly for $\mathcal{G}^{F,\psi}$ and $\mathcal{G}^{G,\psi}$.
The next statement is crucial for the existence and uniqueness result in Section \ref{The system of partial differential inclusions}.
\begin{prop}
	Let $T\colon L^1(a,b)\to L^1(a,b)$ be as in \eqref{f:defT}.
	Then\small
	\begin{equation*}
	\label{prop Iphi}
	\bigg|\int_a^b [Tf\hspace{1mm} (\mathcal{J}^\varphi_F)' - Tg\hspace{1mm} (\mathcal{J}^\varphi_G)']dx\bigg|
	\leq \|\varphi'\|_\infty\|f-g\|_1
	\qquad
	\forall f,g\in L^1(a,b),\ \forall \varphi\in C^{\infty}_c(\mathcal{I}).
	\end{equation*}
	\label{lipschitz Iphi}
\end{prop}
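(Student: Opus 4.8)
The plan is to reduce everything to the contraction property of $T$ in $L^1$ (Proposition~\ref{l:base}) and to a careful bookkeeping of the structure of $\mathcal{J}^\varphi_F$. First I would write the integrand as a telescoping difference,
\[
Tf\,(\mathcal{J}^\varphi_F)' - Tg\,(\mathcal{J}^\varphi_G)'
= (Tf - Tg)\,(\mathcal{J}^\varphi_F)' + Tg\,\big((\mathcal{J}^\varphi_F)' - (\mathcal{J}^\varphi_G)'\big),
\]
and estimate the two terms separately. For the first term, the key point is that $(\mathcal{J}^\varphi_F)'$ is bounded by $\|\varphi'\|_\infty$ in $L^\infty$: indeed by the formula \eqref{zbar}, on the contact set $K_F$ one has $(\mathcal{J}^\varphi_F)' = \varphi'$, while on each interval $I^i=(a^i,b^i)$ the function $\mathcal{J}^\varphi_F$ is the affine interpolant of $\varphi$ between $a^i$ and $b^i$, so its slope is $(\varphi(b^i)-\varphi(a^i))/(b^i-a^i)$, which by the mean value theorem equals $\varphi'(\xi_i)$ for some $\xi_i\in I^i$ and hence is also bounded by $\|\varphi'\|_\infty$. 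Therefore
\[
\Big|\int_a^b (Tf-Tg)\,(\mathcal{J}^\varphi_F)'\,dx\Big|
\le \|\varphi'\|_\infty\,\|Tf-Tg\|_1
\le \|\varphi'\|_\infty\,\|f-g\|_1,
\]
the last inequality being exactly \eqref{f:th2}. If the final bound is to have constant exactly $\|\varphi'\|_\infty$, then the second term must integrate to zero, and the real content of the proof is to show this.

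The main obstacle is thus to prove that $\int_a^b Tg\,\big((\mathcal{J}^\varphi_F)' - (\mathcal{J}^\varphi_G)'\big)\,dx = 0$ for every $g$, or more precisely to organize the argument so that this term is controlled without an extra contribution. The natural route is integration by parts: since $Tg = (\Gss)'$ with $\Gss\in W^{1,1}_0$, and since $\mathcal{J}^\varphi_F - \mathcal{J}^\varphi_G$ vanishes at $a$ and $b$ (both equal $\varphi$ there, and in fact $\varphi$ has compact support), we get
\[
\int_a^b (\Gss)'\,\big(\mathcal{J}^\varphi_F - \mathcal{J}^\varphi_G\big)'\,dx
= -\int_a^b \Gss\,\big(\mathcal{J}^\varphi_F - \mathcal{J}^\varphi_G\big)''\,dx
\]
in the distributional sense. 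Now the crucial structural fact is that $\mathcal{J}^\varphi_G$ is \emph{affine on each flat interval of $\Gss$}, so $(\Gss)''$ (a nonnegative measure supported on $\{ \Gss = \text{strictly convex}\} \subset K_G$) pairs with $\mathcal{J}^\varphi_G$ exactly as it pairs with $\varphi$ there — on $K_G$ we have $\mathcal{J}^\varphi_G = \varphi$. The same is true for $F$. So one should instead group the telescoping differently: write the whole integrand using the identity $\int_a^b Tf\,(\mathcal{J}^\varphi_F)' = \int_a^b (\Fss)'\,(\mathcal{J}^\varphi_F)'$ and integrate by parts to get $-\langle (\Fss)'',\, \mathcal{J}^\varphi_F\rangle = -\langle (\Fss)'',\,\varphi\rangle$, because $(\Fss)''$ lives on $K_F$ where $\mathcal{J}^\varphi_F = \varphi$, and likewise $\int_a^b Tg\,(\mathcal{J}^\varphi_G)' = -\langle (\Gss)'',\,\varphi\rangle$. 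Hence
\[
\int_a^b \big[Tf\,(\mathcal{J}^\varphi_F)' - Tg\,(\mathcal{J}^\varphi_G)'\big]\,dx
= -\big\langle (\Fss)'' - (\Gss)'',\ \varphi\big\rangle
= \int_a^b \big((\Fss)' - (\Gss)'\big)\,\varphi'\,dx
= \int_a^b (Tf - Tg)\,\varphi'\,dx,
\]
and the bound $\le \|\varphi'\|_\infty\|Tf-Tg\|_1 \le \|\varphi'\|_\infty\|f-g\|_1$ follows immediately from \eqref{f:th2}.

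The steps, in order: (i) justify that $\Fss\in W^{1,1}_0$ is convex so that $(\Fss)''$ is a nonnegative Radon measure supported on the contact set $K_F$ (this is where I would cite the regularity recalled in Section~\ref{s:convex} and Remark~\ref{rmk 2.5}, noting $F$ itself is only $W^{1,1}_0$ but $\Fss$ is convex and Lipschitz, so second derivatives are measures); (ii) establish the pointwise identity $\mathcal{J}^\varphi_F = \varphi$ on $K_F$ and affinity of $\mathcal{J}^\varphi_F$ on each $I^i$, directly from \eqref{zbar}; (iii) perform the integration by parts $\int (\Fss)'(\mathcal{J}^\varphi_F)' = -\int \mathcal{J}^\varphi_F\, d(\Fss)''$, valid since $\mathcal{J}^\varphi_F\in W^{1,1}$ with compact-support-type boundary behavior matching $\varphi$; (iv) replace $\mathcal{J}^\varphi_F$ by $\varphi$ under the measure $(\Fss)''$ using (ii), and symmetrically for $G$; (v) undo one integration by parts to return to $\int (Tf-Tg)\varphi'\,dx$ and apply \eqref{f:th2}. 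The most delicate point is step (iii)–(iv): one must be sure the measure $(\Fss)''$ genuinely charges no mass on the open flat intervals $I^i$ (true by definition of convex envelope, since $\Fss$ is affine there) and that the integration by parts has no boundary terms — which holds because $\varphi\in C_c^\infty(\mathcal I)$ and $\mathcal{J}^\varphi_F$ inherits the same vanishing at $a,b$. I would also remark that the identity in fact shows the left-hand side equals $\int_a^b(Tf-Tg)\varphi'$ \emph{exactly}, a slightly stronger statement than the stated inequality, which may be worth recording for the uniqueness argument in Section~\ref{The system of partial differential inclusions}.
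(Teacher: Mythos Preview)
Your approach is correct and reaches exactly the same key identity the paper uses, namely
\[
\int_a^b Tf\,(\mathcal{J}^\varphi_F)'\,dx \;=\; \int_a^b Tf\,\varphi'\,dx
\]
(and likewise for $g$), after which the bound is immediate from H\"older and \eqref{f:th2}. The paper, however, obtains this identity by a more elementary and direct route: instead of integrating by parts against the measure $(\Fss)''$, it simply observes that $Tf=(\Fss)'$ is \emph{constant} on each interval $I^i$ of affinity (equal to the slope $(F(b^i)-F(a^i))/(b^i-a^i)$), while $(\mathcal{J}^\varphi_F)'$ there equals the average $(b^i-a^i)^{-1}\int_{a^i}^{b^i}\varphi'$; hence $\int_{I^i} Tf\,(\mathcal{J}^\varphi_F)' = Tf|_{I^i}\int_{I^i}\varphi' = \int_{I^i} Tf\,\varphi'$, and on $K_F$ one has $(\mathcal{J}^\varphi_F)'=\varphi'$ identically. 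Summing over the pieces gives the identity with no measure theory at all. Your route via the support of $(\Fss)''$ is conceptually illuminating (it makes transparent \emph{why} the identity holds: the second-derivative measure lives precisely where $\mathcal{J}^\varphi_F=\varphi$), but it forces you to handle the BV integration by parts, boundary behavior, and regularity of $(\Fss)'$ near $a,b$ for general $f\in L^1$ --- exactly the delicate points you flag in steps~(iii)--(iv). The paper's piecewise-constant argument sidesteps all of this in two lines. Your closing remark that the identity is exact (not just an inequality) is spot on; the paper also records this ``fundamental integral equivalence'' and uses it in the proof of Proposition~\ref{rmk}.
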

\normalsize
Similarly, it is possible to state the following more general result.
\begin{prop}
	Let  $\Lambda$ and $\mathcal{G}^{\theta,\psi}$ be as in Example \ref{es theta},
	$\mathcal{H}\in {\rm Lip}(\mathbb{R})$ with Lipschitz constant $\mathcal{L}>0$. Then:\footnotesize
	\begin{enumerate}[i)]
		\item	\hspace{3mm}$\bigg|\int_a^b [\mathcal{H}(Tf)\hspace{1mm} (\mathcal{J}^\varphi_F)' - \mathcal{H}(Tg)\hspace{1mm} (\mathcal{J}^\varphi_G)']dx\bigg|
		\leq \mathcal{L}\|\varphi'\|_\infty\|f-g\|_1
		\quad
		\forall f,g\in L^1(a,b),\ \forall \varphi\in C^{\infty}_c(\mathcal{I});$
		\item $\exists C>0,\ \bigg|\int_a^b [\mathcal{H}(Tf)\hspace{1mm} (\mathcal{G}^{F,\psi})' - \mathcal{H}(Tg)\hspace{1mm} (\mathcal{G}^{G,\psi})']dx\bigg|
		\leq C\|F-G\|_{W^{1,1}}\quad	\forall f,g\in L^1,\ \forall \psi\in C^{\infty}_c(\mathcal{I})$.
	\end{enumerate}
	\normalsize
	\label{rmk}
\end{prop}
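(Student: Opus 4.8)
The plan is to reduce Proposition~\ref{rmk} to Proposition~\ref{lipschitz Iphi} by exploiting the Lipschitz estimate for $T$ in $L^1$ (estimate \eqref{f:th2}) together with the Lipschitz hypothesis on $\mathcal{H}$, and by controlling the ``test-function part'' through the explicit structure of $\mathcal{J}^\varphi_F$ and $\mathcal{G}^{F,\psi}$. For part~i), I would write the integrand as a telescoping sum
\[
\mathcal{H}(Tf)\,(\mathcal{J}^\varphi_F)' - \mathcal{H}(Tg)\,(\mathcal{J}^\varphi_G)'
= \big[\mathcal{H}(Tf)-\mathcal{H}(Tg)\big](\mathcal{J}^\varphi_F)'
+ \mathcal{H}(Tg)\big[(\mathcal{J}^\varphi_F)' - (\mathcal{J}^\varphi_G)'\big].
\]
The first term is handled directly: $|\mathcal{H}(Tf)-\mathcal{H}(Tg)|\le \mathcal{L}|Tf-Tg|$ and $|(\mathcal{J}^\varphi_F)'|\le \|\varphi'\|_\infty$ a.e. (because on each $I^i_F$ the derivative of $\mathcal{J}^\varphi_F$ is the difference quotient of $\varphi$, bounded by $\|\varphi'\|_\infty$ by the mean value theorem, and off the union of those intervals it is just $\varphi'$), so by \eqref{f:th2} this contributes at most $\mathcal{L}\|\varphi'\|_\infty\|f-g\|_1$. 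The delicate term is the second one, and here the key observation is that $\mathcal{H}$ is only bounded on bounded sets, so one cannot simply bound $\mathcal{H}(Tg)$ by a constant independent of $g$; instead I would absorb it by a different grouping, namely mimicking the proof of Proposition~\ref{lipschitz Iphi} with $\mathcal{H}(Tg)$ in place of the linear factor there. Concretely, Proposition~\ref{lipschitz Iphi} is exactly the case $\mathcal{H}=\mathrm{id}$, and inspecting its proof one sees that the mechanism is: the quantity $\int_a^b Tf\,(\mathcal{J}^\varphi_F)'$ can be rewritten, after integration by parts and using that $\mathcal{J}^\varphi_F$ is affine on each flat interval $K^i_F$ while $(\Fss)'$ is constant there, as something of the form $\int_a^b \varphi'\,(\text{piecewise reorganization of }Tf)$ whose dependence on $f$ is $1$-Lipschitz. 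The same reorganization works verbatim with the nonlinear weight $\mathcal{H}(Tf)$ attached, provided one is careful that $\mathcal{H}(Tf)$ is also constant on each flat interval (which it is, since $Tf=(\Fss)'$ is constant there).

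Therefore the cleanest route is \emph{not} to telescope but to run the proof of Proposition~\ref{lipschitz Iphi} with the substitution $Tf \rightsquigarrow \mathcal{H}(Tf)$ throughout, checking that the only properties of the factor $Tf$ used there are: (a) it is constant on each maximal flat interval $K^i$ of $\Fss$; (b) the map $f\mapsto Tf$ is $1$-Lipschitz from $L^1$ to $L^1$ — which becomes $\mathcal{L}$-Lipschitz after composing with $\mathcal{H}$. Granting this, one gets directly
\[
\Big|\int_a^b \big[\mathcal{H}(Tf)(\mathcal{J}^\varphi_F)' - \mathcal{H}(Tg)(\mathcal{J}^\varphi_G)'\big]dx\Big|
\le \|\varphi'\|_\infty \|\mathcal{H}(Tf)-\mathcal{H}(Tg)\|_1
\le \mathcal{L}\|\varphi'\|_\infty\|f-g\|_1,
\]
using \eqref{f:th2} in the last step. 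Here I should double check that the contact-set / flat-interval structure appearing in $\mathcal{J}^\varphi_F$ and in $\mathcal{J}^\varphi_G$ are a priori \emph{different} (they depend on $F$ and $G$ respectively); the argument in Proposition~\ref{lipschitz Iphi} handles exactly this mismatch, and the nonlinearity $\mathcal{H}$ does not interact with it, so the same bookkeeping carries over.

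For part~ii), the structure is identical but now the test function is $\mathcal{G}^{F,\psi}$ from Example~\ref{es theta}, which carries the extra factor $\cos(\theta(\cdot))$ — in the application $\theta$ is itself reconstructed from $F$, so $\mathcal{G}^{F,\psi}$ depends on $F$ both through the flat-interval partition and through the weight $\cos\theta$. Its derivative on a flat interval is the difference quotient of $x\mapsto \psi(x)\cos(\theta(x))$, hence bounded by $\|(\psi\cos\theta)'\|_\infty$, and off the flat intervals it equals $(\psi\cos\theta)'$ pointwise; either way $\|(\mathcal{G}^{F,\psi})'\|_\infty \le C_\psi$ for a constant depending on $\psi$ and on the $C^1$ bound of $\theta$ but not on the particular $f$. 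Running the same reorganization as in part~i) yields a bound of the form $\|(\mathcal{G}^{F,\psi})'\|_\infty\,\mathcal{L}\,\|Tf-Tg\|_1 + (\text{error from }\mathcal{G}^{F,\psi}\neq\mathcal{G}^{G,\psi})$; the first piece is $\le C\|F'-G'\|_1 \le C\|F-G\|_{W^{1,1}}$, and the second piece — the difference of the weights $\cos(\theta_F)-\cos(\theta_G)$ and of the two partitions — is controlled by $\|\theta_F-\theta_G\|_\infty$, hence by $\|F-G\|_{W^{1,1}}$, via the Lipschitz dependence of the reconstruction $F\mapsto\theta$ and the Corollary on Lipschitz continuity of $P$. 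Collecting terms gives $\exists C>0$ with the stated bound; the constant $C$ legitimately depends on $\Lambda$, on the ambient $C^1$ bounds, and on $\psi$, which is consistent with the statement (where only $f,g$ are quantified).

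The main obstacle, and the step deserving the most care, is the second one: verifying that the proof of Proposition~\ref{lipschitz Iphi} genuinely only uses the two abstract properties (a)–(b) of the factor $Tf$ and is otherwise insensitive to replacing that factor by $\mathcal{H}(Tf)$. If the original proof uses, say, an explicit antiderivative of $Tf$ (namely $\Fss$ itself) in an essential way, then one cannot literally substitute $\mathcal{H}(Tf)$, and one would instead have to argue through the telescoping decomposition above, at the cost of bounding $\mathcal{H}(Tg)$ — which is possible but requires an a priori $L^\infty$ bound on $Tg=(\Gss)'$, available because $G\in W^{1,1}_0$ forces $\Gss$ to be Lipschitz on $[a,b]$ with $\|(\Gss)'\|_\infty\le \|G'\|_\infty$... except that in the proposition $f,g$ are only in $L^1$, so $Tg$ need not be bounded. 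This is precisely why the reorganization-with-weight approach (rather than telescoping) is the safe one: it never needs $\mathcal{H}(Tg)$ to be bounded, only that $f\mapsto\mathcal{H}(Tf)$ is Lipschitz $L^1\to L^1$, which holds because $T$ maps $L^1$ into $L^\infty$-valued... no — again $Tf\notin L^\infty$ for general $f\in L^1$. Hence the cleanest honest statement is: $\|\mathcal{H}(Tf)-\mathcal{H}(Tg)\|_1\le\mathcal{L}\|Tf-Tg\|_1\le\mathcal{L}\|f-g\|_1$ by \eqref{f:th2}, which uses only the Lipschitz property of $\mathcal{H}$ and never its boundedness, and this is all that the reorganization argument requires. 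I would therefore present part~i) via that reorganization and derive part~ii) by the same scheme plus the weight-perturbation estimate, flagging the verification of (a)–(b) as the one point where the proof of Proposition~\ref{lipschitz Iphi} must be revisited line by line.
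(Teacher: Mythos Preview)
Your approach is correct and matches the paper's: the key step is precisely the ``reorganization'' you describe, namely the identity $\int_a^b \mathcal{H}(Tf)\,(\mathcal{J}^\varphi_F)'\,dx = \int_a^b \mathcal{H}(Tf)\,\varphi'\,dx$ (valid because $\mathcal{H}(Tf)$, like $Tf$, is constant on each flat interval of $F^{**}$), which gives i) directly, and the analogous identity $\int_a^b \mathcal{H}(Tf)\,(\mathcal{G}^{F,\psi})'\,dx = \int_a^b \mathcal{H}(Tf)\,(\psi\cos F)'\,dx$ followed by your telescoping decomposition gives ii). Your worry that the proof of Proposition~\ref{lipschitz Iphi} might rely on an explicit antiderivative of $Tf$ is unfounded: that proof is nothing more than this integral equivalence, which uses only your property~(a), so the substitution $Tf\rightsquigarrow\mathcal{H}(Tf)$ goes through verbatim.
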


We conclude this section with the continuous dependence of $(\mathcal{J}^\varphi)'$ on $f$.

\begin{prop}\label{Iphi conv}
	Let $f, f_n\in C^1(\overline{\mathcal{I}})$, $n\in\mathbb{N}$,
	satisfy assumption \eqref{noflat},
	assume that the sequence $\{f_n\}$ converges uniformly to $f$,
	and let $\varphi\in C^{\infty}_c(\mathcal{I})$.
	Denote by $\mathcal{J}^\varphi$ the function related to $f$
	defined in \eqref{zbar}
	and by $\mathcal{J}^\varphi_n$  the corresponding function related to $f_n$.
	Then $\|\mathcal{J}^\varphi_n-\mathcal{J}^\varphi\|_1 \to 0$ and $\|(\mathcal{J}^{\varphi}_n)'-(\mathcal{J}^\varphi)'\|_1 \to 0$.
\end{prop}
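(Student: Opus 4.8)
The plan is to reduce everything to the combinatorics of the intervals $I^i = (a^i, b^i)$ on which $f^{**}$ is affine, and to show that as $f_n \to f$ uniformly, the ``profile'' of these intervals converges in an $L^1$-type sense strong enough to control both $\mathcal{J}^\varphi_n \to \mathcal{J}^\varphi$ and $(\mathcal{J}^\varphi_n)' \to (\mathcal{J}^\varphi)'$. Recall that, under \eqref{noflat}, formula \eqref{zbar} says $\mathcal{J}^\varphi$ agrees with $\varphi$ on $K_f$ and equals the affine interpolant of $\varphi$ between the endpoints on each $I^i$; in particular $(\mathcal{J}^\varphi)'$ equals $\varphi'$ on $K_f$ and the constant slope $\frac{\varphi(b^i)-\varphi(a^i)}{b^i-a^i}$ on each $I^i$. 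The first step is therefore to recast the claim via the operator $T$ of Proposition \ref{l:base}: writing $F(x) = \int_a^x f$, one checks that $(\mathcal{J}^\varphi_F)'$ is essentially $T$ applied to (a derivative of) $\varphi$ read off the contact structure of $F^{**}$ — more directly, I would invoke the bound in Proposition \ref{lipschitz Iphi} with $g = f_n$, which gives $\big|\int_a^b [Tf\,(\mathcal{J}^\varphi_F)' - Tf_n\,(\mathcal{J}^\varphi_{F_n})']\big| \le \|\varphi'\|_\infty \|f - f_n\|_1 \to 0$; but since this only controls a weighted integral, not the $L^1$ norm of the difference of the $(\mathcal{J}^\varphi)'$ themselves, I will instead need a direct argument on the interval structure.

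The core step is the following geometric convergence lemma: as $f_n \to f$ uniformly, the convex envelopes $f_n^{**} \to f^{**}$ uniformly (standard: convexification is $1$-Lipschitz for the sup-norm), hence the contact sets and the affine intervals are stable in measure. Precisely, for each fixed $I^i = (a^i,b^i)$ of $f$ and each $\delta > 0$, for $n$ large there is an affine interval of $f_n^{**}$ whose endpoints lie within $\delta$ of $a^i, b^i$; conversely any ``long'' affine interval of $f_n^{**}$ is close to one of $f$. The delicate point — and I expect this to be the main obstacle — is that $f$ may have infinitely many affine intervals $I^i$ accumulating, and the short ones can split, merge, or appear/disappear under perturbation. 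To handle this I would fix $\varepsilon > 0$, choose a finite subfamily $I^1,\dots,I^N$ carrying all but $\varepsilon$ of the total length $\sum_i |I^i|$, note that on the complement the slope of the affine interpolant of $\varphi$ differs from $\varphi'$ by at most $2\|\varphi'\|_\infty$ (mean value theorem), so the contribution of small and ``miscellaneous'' intervals to $\|(\mathcal{J}^\varphi_n)' - (\mathcal{J}^\varphi)'\|_1$ is $O(\varepsilon \|\varphi'\|_\infty)$ uniformly in $n$, and then pass to the limit on the finitely many fixed large intervals where the endpoint convergence $a^i_n \to a^i$, $b^i_n \to b^i$ gives slope convergence $\frac{\varphi(b^i_n) - \varphi(a^i_n)}{b^i_n - a^i_n} \to \frac{\varphi(b^i) - \varphi(a^i)}{b^i - a^i}$ and measure convergence $|I^i_n \triangle I^i| \to 0$.

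Putting these together: split $[a,b]$ into (a) the union of the $N$ large intervals of $f$, where one uses endpoint convergence plus $|I^i_n \triangle I^i| \to 0$ and boundedness of $\varphi'$ to get the integral of $|(\mathcal{J}^\varphi_n)' - (\mathcal{J}^\varphi)'|$ over this part $\to 0$; (b) the ``error'' region — the symmetric difference between $\bigcup_{i \le N} I^i$ and the corresponding structure for $f_n$, together with all small intervals — whose measure is $O(\varepsilon)$ for $n$ large, contributing $O(\varepsilon\|\varphi'\|_\infty)$; and (c) the remaining contact region, where both functions equal $\varphi'$. Since $\varepsilon$ is arbitrary, $\|(\mathcal{J}^\varphi_n)' - (\mathcal{J}^\varphi)'\|_1 \to 0$. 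The $L^1$ convergence of $\mathcal{J}^\varphi_n \to \mathcal{J}^\varphi$ then follows either by the same splitting (the affine interpolant of $\varphi$ on $I^i_n$ converges uniformly to that on $I^i$ once the endpoints converge) or, more cheaply, from $\|(\mathcal{J}^\varphi_n)' - (\mathcal{J}^\varphi)'\|_1 \to 0$ together with $\mathcal{J}^\varphi_n = \mathcal{J}^\varphi = \varphi$ near $\partial\mathcal{I}$ (since $\varphi \in C^\infty_c$) and the Poincar\'e inequality. The only subtlety to be careful about is ensuring the endpoint-convergence claim in step (b) genuinely holds for each fixed large interval — this uses that $f$ is $C^1$ and satisfies \eqref{noflat}, so that near $a^i, b^i$ the graph of $f$ detaches from $f^{**}$ transversally enough that a small uniform perturbation cannot move the contact point far; I would prove this by a compactness/contradiction argument using uniform convergence of $f_n^{**}$ to $f^{**}$ and of $(f_n^{**})'$ to $(f^{**})'$ on compact subsets.
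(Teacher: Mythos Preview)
Your approach is workable in spirit but more laborious than the paper's, and as written it has a soft spot in the measure control of region~(c).

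The paper's proof is much shorter. It first records the uniform bounds $\|\mathcal{J}^\varphi_n\|_\infty \le \|\varphi\|_\infty$ and $\|(\mathcal{J}^\varphi_n)'\|_\infty \le \|\varphi'\|_\infty$ (the latter by the mean value theorem, exactly as you observe). By the Dominated Convergence Theorem it then suffices to prove \emph{pointwise} a.e.\ convergence of $\mathcal{J}^\varphi_n$ and $(\mathcal{J}^\varphi_n)'$. This is done case by case via the stability lemma (Lemma~\ref{l:stab}, whose two parts are precisely your ``endpoint convergence'' and your ``compactness/contradiction'' claims): if $x_0\in\widetilde{K}_f$, the interval of $f_n$ through $x_0$ (if any) has endpoints $a_n,b_n\to x_0$, so $(\mathcal{J}^\varphi_n)'(x_0)=\tfrac{\varphi(b_n)-\varphi(a_n)}{b_n-a_n}=\varphi'(\xi_n)\to\varphi'(x_0)$; if $x_0\in I^i=(a^i,b^i)$, the interval of $f_n$ through $x_0$ has endpoints converging to $a^i,b^i$, so the slope converges. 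These two cases exhaust a.e.\ point of $\mathcal{I}$, and no $\varepsilon$--$N$ truncation is needed at all.

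Your decomposition runs into trouble at step~(c). You claim the ``remaining contact region'' is where \emph{both} derivatives equal $\varphi'$, but you have only removed the $N$ large intervals of $f$, their symmetric differences with the $N$ corresponding intervals of $f_n$, and the small intervals $\bigcup_{i>N} I^i$ of $f$. You have \emph{not} removed the intervals of $f_n$ that correspond to none of the chosen $N$ large intervals of $f$ --- and these can cover a portion of $K_f$ whose measure you have not shown to be $O(\varepsilon)$. On that portion $(\mathcal{J}^\varphi_n)'$ is a slope, not $\varphi'$, so (c) is not as described. To repair this you must either bound the total measure of these stray intervals of $f_n$ (a separate argument you have not supplied) or show $(\mathcal{J}^\varphi_n)'\to\varphi'$ pointwise on $K_f$ --- which is exactly Lemma~\ref{l:stab}(a) plus DCT, i.e.\ the paper's route. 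Once patched this way, your $\varepsilon$--$N$ machinery becomes redundant: the uniform bound plus pointwise convergence handles all intervals, large and small, simultaneously.
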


\section{Energy balance in a suspension bridge}
\label{Energy involved in the structure}

\subsection{The energy of the deck}
\label{Kinetic, potential and deformation energy of the system}
In this section we define all the energetic contributions involved in the cable-hangers-beam system aiming to derive the variational form of the problem.
In Figure \ref{schema} is sketched a cross section of the bridge, in which the degrees of freedom $w(x,t)$ and $\theta(x,t)$ correspond
respectively to the downward displacement and the torsional angle around the barycentric line of the deck.
\begin{figure}[htbp]
	\centering
	\includegraphics[width=13cm]{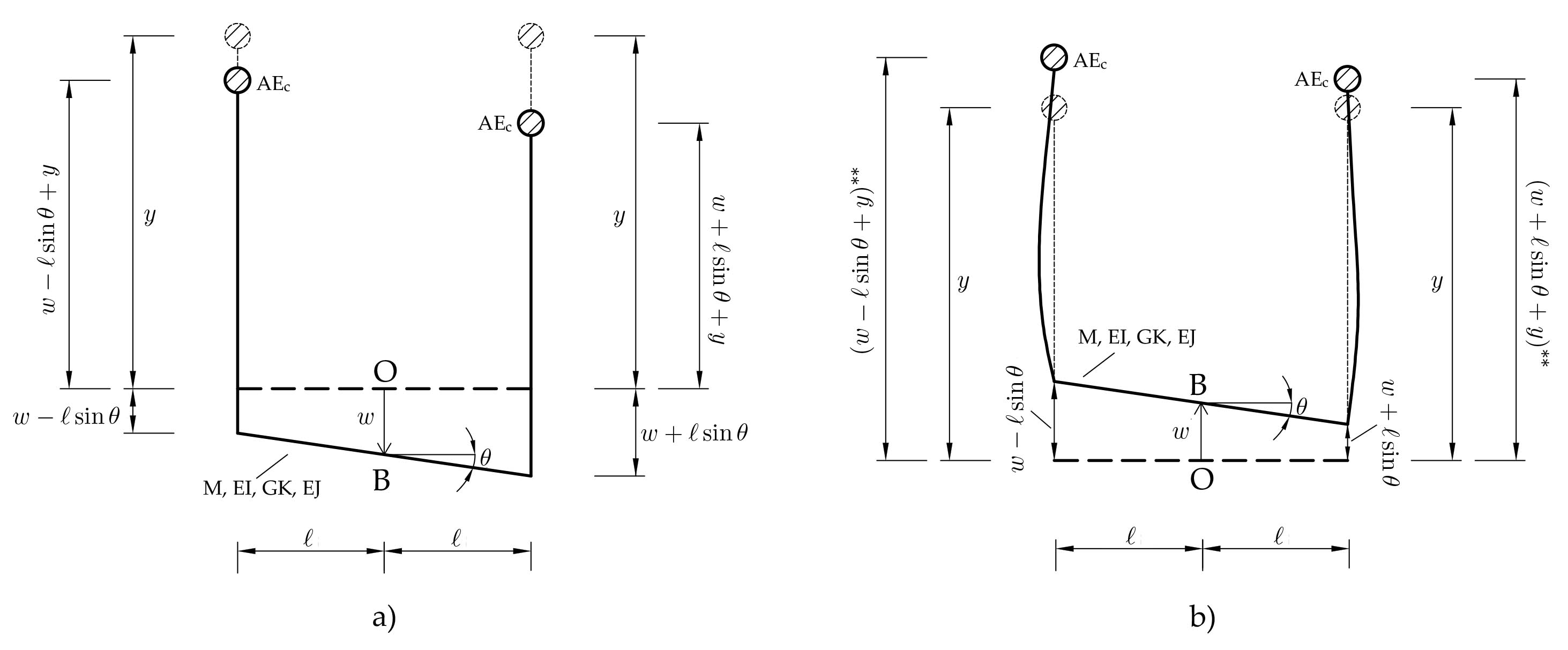}
	\caption{Mutual positions of the cross section of the bridge and of the cables.}
	\label{schema}
\end{figure}
We do not consider the masses of the hangers and of the cables, since they are negligible with respect to the mass of the deck.
The deformations of the deck deriving from bending and torsion are modeled, as for a beam, according to the de Saint Venant and Vlasov theory:
the deck is characterized by the flexural rigidity $EI$, the torsional rigidity $GK$ (by de Saint Venant) and the torsional warping $EJ$
(by Vlasov \cite{Vlasov}). The energy of the deck is given by the kinetic, the gravitational and the deformation contributions, for details see \cite{tesi},
\begin{equation*}
\begin{split}
E_{d}&=\frac{M}{2}\int_{0}^{L}w_{t}^2\hspace{1mm}dx+\frac{M\ell^2}{6}\int_{0}^{L}\theta_{t}^2\hspace{1mm}dx-Mg\int_{0}^{L} w\hspace{1mm}dx\\&+\frac{EI}{2}\int_{0}^{L}w_{xx}^2\, dx+\frac{GK}{2}\int_{0}^{L}\theta_{x}^2\, dx+\frac{EJ}{2}\int_{0}^{L}\theta_{xx}^2\, dx,
\end{split}
\label{en cinetica}
\end{equation*}
where $M$ is the mass linear density of the deck, $\ell$ its semi width, $g$ the gravitational acceleration, $E$ the Young modulus, $G$ the shear modulus, $I$ the moment of inertia, $K$ the torsional constant and $J$ is the warping constant of the cross section. The last term was added by Vlasov \cite{Vlasov} to the de Saint Venant's deformation terms.

\subsection{The deformation energy of the cables}
\label{energia convex}
We assume that the cables have the same mechanical properties and, at rest, they take a parabolic shape given by
\begin{equation}
y(x)=-\dfrac{4f}{L^2}x^2+\dfrac{4f}{L}x-y_0 \hspace{7mm}\forall x\in [0,L],
\label{y(x)}
\end{equation}
where  $y_{0}$ is the height of the towers and $f$ is the cable sag as in Figure \ref{ponte}; for details on the derivation of (\ref{y(x)}) we refer to \cite{biot, falocchi2}.
The local length of the cables is given for all $x\in[0,L]$ by the bounded function
\begin{equation*}
\xi(x):=\sqrt{1+y'(x)^2}, \hspace{15mm}1\leq\xi(x)\leq\xi_M:=\sqrt{1+\bigg(\dfrac{4f}{L}\bigg)^2}.
\label{xi1}
\end{equation*}
From \cite[p.68]{Podolny} we know that in bridge design the sag-span ratio $f/L$ varies between $1/12$ and $1/8$, implying a little variation of $\xi(x)$ on $[0,L]$; indeed, its maximum value, assumed for  $x\in\{0,L\}$, is
$
\xi(0)=\xi(L)=\xi_M\in\big[\frac{\sqrt{10}}{3},\frac{\sqrt{5}}{2}\big]=[1.05, 1.11].
$
 In engineering literature $\xi(x)$ is often approximated with 1, see \cite{biot}. But $\xi(x)$ remains closer to its mean value $\overline{\xi}$ over the interval $[0,L]$; for these reasons we shall use the approximation
\begin{equation}
\xi(x)\approx\overline{\xi}:=\dfrac{\int_{0}^L \xi(x)dx}{L}.
\label{xi}
\end{equation}
We recall that for the TNB, by assuming \eqref{xi}, the maximum error is less than $2.86$\%.
\par
To obtain the energy of the cables we need to find their convexified shapes.
Figure \ref{schema}a) shows the situation with tensioned hangers, in which the edges of the deck have moved downwards of
$w\pm\ell\sin\theta$. In this case, the cables have a convex shape and the hangers behave like inextensible elements so that
the cables have the same displacement of the deck and their positions are $(w\pm\ell\sin\theta+y)$. In Figure \ref{schema}b)
we represent the innovative part of our model. If the endpoints of the cross section of the deck move upwards, above the position
$(w\pm\ell\sin\theta+y)=0$, then the slackening of the hangers may occur, producing a vertical displacement in the cables equal to
$(w\pm\ell\sin\theta+y)^{**}$. The shape of the cables is then given by the convexification of the function $(w\pm\ell\sin\theta+y)$,
that depends on both $x$ and $t$. To determine the deformation energy of a cable we need to compute its variation of length with respect to its
initial length $L_c:=\int_{0}^{L}\sqrt{1+(y')^2}\big)dx$. Then, we introduce the functional
\begin{equation}\label{gamma}
\Gamma: C^1[0,L]\rightarrow \mathbb{R}\, ,\qquad\Gamma(u):=\int_{0}^{L}\big(\sqrt{1+\{[(u+y)^{**}]_{x}\}^2}\big)dx-L_c,
\end{equation}
which is well-defined, since the convexification preserves the $C^1$-regularity of $u$.
The deformation energy $E_C$ of the cables is composed by two contributions. The first is related to the tension at rest and the second to the additional tension due to the increment of the length $\Gamma(w\pm\ell\sin\theta)$ of each cable. Hence if $\overline{\xi}$ is as in \eqref{xi}, we have

\footnotesize
\begin{equation}\label{energy cavi}
\begin{split}
E_{C}=&H\overline{\xi}\bigg[\int_{0}^{L}\bigg(\sqrt{1+\{[(w+\ell\sin\theta+y)^{**}]_{x}\}^2}+\sqrt{1+\{[(w-\ell\sin\theta+y)^{**}]_{x}\}^2}\bigg)\, dx-2L_c\bigg]\\
&+\frac{AE_c}{2L_{c}}\big([\Gamma(w+\ell\sin\theta)]^2+[\Gamma(w-\ell\sin\theta)]^2\big),
\end{split}
\end{equation}\normalsize
with $H=$ horizontal tension, $A=$ sectional area, $E_c=$ Young modulus of the cable.

\subsection{Functional spaces and total energy of the system}

We consider the Hilbert spaces $L^2(0,L)$, $H^1_0(0,L)$ and $H^2\cap H^1_{0}(0,L)$, endowed respectively with the scalar products
\begin{equation*}
(u,v)_{2}=\int_{0}^{L}uv, \hspace{5mm} (u,v)_{H^1}=\int_{0}^{L}u'v',\hspace{5mm} (u,v)_{H^2}=\int_{0}^{L}u''v''.
\end{equation*}
We denote by $H^{*}(0,L)$ the dual space of $H^2\cap H^1_0(0,L)$ with the corresponding duality $\langle \cdot,\cdot\rangle_*$.
The solutions of the equations are required to satisfy $(w,\theta)\in X^2_T$, where
\begin{equation}
X_T:=C^0\big([0,T];H^2\cap H^1_0(0,L)\big)\cap C^1\big([0,T];L^2(0,L)\big)\cap C^2\big([0,T];H^*(0,L)\big).
\label{regularity weak}
\end{equation}

Then, by adding all the energetic contributions of the system, for every $(w,\theta)\in X_T^2$ we find the functional
\footnotesize
\begin{equation}
\begin{split}
\mathcal{E}(w,\theta):=&\int_{0}^{L}\bigg(\frac{M}{2}w_{t}^2\hspace{1mm}+\frac{M\ell^2}{6}\theta_{t}^2\bigg)\hspace{1mm}dx
+\int_{0}^{L}\bigg(\frac{EI}{2}w_{xx}^2+\frac{EJ}{2}\theta_{xx}^2+\frac{GK}{2}\theta_{x}^2\bigg)dx\\
&\hspace{0mm}+H\overline{\xi}\bigg\{\int_{0}^{L}\big(\sqrt{1+\{[(w+\ell\sin\theta+y)^{**}]_{x}\}^2}+\sqrt{1+\{[(w-\ell\sin\theta+y)^{**}]_{x}\}^2}\big)dx\bigg\}\\
&\hspace{0mm}-2H\overline{\xi}L_c+\frac{AE_c}{2L_{c}}\big([\Gamma(w+\ell\sin\theta)]^2+[\Gamma(w-\ell\sin\theta)]^2\big)-Mg\int_{0}^{L}w\hspace{1mm}dx,
\label{energy tot}
\end{split}
\end{equation}
\normalsize
that is well-defined and represents the energy of the system.
\begin{prop}
	The functional $\mathcal{E}: X_T^2\rightarrow \mathbb{R}$ is locally Lipschitz continuous.
	\label{lipschitz functional}
\end{prop}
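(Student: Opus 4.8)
The plan is to decompose $\mathcal{E}$ into a sum of terms and show that each is locally Lipschitz continuous on $X_T^2$; since a finite sum of locally Lipschitz maps is locally Lipschitz, this suffices. The linear term $-Mg\int_0^L w$ and the constant $-2H\overline\xi L_c$ are trivially (globally) Lipschitz. The quadratic deck terms $\int_0^L(\tfrac{M}{2}w_t^2+\tfrac{M\ell^2}{6}\theta_t^2)$ and $\int_0^L(\tfrac{EI}{2}w_{xx}^2+\tfrac{EJ}{2}\theta_{xx}^2+\tfrac{GK}{2}\theta_x^2)$ are continuous quadratic forms on the relevant Banach spaces (using the embeddings $C^0([0,T];H^2\cap H^1_0)$ and $C^1([0,T];L^2)$ built into the definition \eqref{regularity weak}), hence are locally Lipschitz by the elementary fact that a bounded bilinear form $B$ satisfies $|B(u,u)-B(v,v)|\le \|B\|\,(\|u\|+\|v\|)\,\|u-v\|$. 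So the whole difficulty is concentrated in the two genuinely nonlinear, convexification-dependent blocks: the $H\overline\xi$-term involving $\int_0^L\sqrt{1+\{[(w\pm\ell\sin\theta+y)^{**}]_x\}^2}\,dx$, and the quadratic-in-$\Gamma$ term $\tfrac{AE_c}{2L_c}([\Gamma(w+\ell\sin\theta)]^2+[\Gamma(w-\ell\sin\theta)]^2)$.

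For these, first I would reduce to controlling, at each fixed $t\in[0,T]$, the map $u\mapsto [(u+y)^{**}]_x$ from $C^1[0,L]$ to $L^1(0,L)$ (or to $C^0$). Writing $F_u(x):=\int_0^x (u'+y')$ so that $F_u\in W^{1,1}_0$ with $F_u'=u'+y'$, the Corollary to Proposition \ref{l:base} gives $\|(F_u)^{**}-(F_v)^{**}\|_{W^{1,1}}\le(\tfrac{L}{2}+1)\|u'-v'\|_1$; note $[(u+y)^{**}]_x$ differs from $((F_u)^{**})'$ only by the additive constant $(u+y)(0)$, which depends Lipschitz-continuously on $u$ in $C^1$. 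Hence $u\mapsto[(u+y)^{**}]_x$ is Lipschitz from $C^1[0,L]$ to $L^1(0,L)$, with constant independent of $t$. Composing with the $C^1$, hence locally Lipschitz, map $(w,\theta)\mapsto w\pm\ell\sin\theta$ from $(H^2\cap H^1_0)^2$ into $C^1[0,L]$ (using the Sobolev embedding $H^2\cap H^1_0(0,L)\hookrightarrow C^1[\overline{0,L}]$ and the smoothness and boundedness of $\sin$ on the image of a bounded set), and then using the uniform-in-$t$ bounds coming from $(w,\theta)\in C^0([0,T];(H^2\cap H^1_0)^2)$, I obtain that $t\mapsto [(w(\cdot,t)\pm\ell\sin\theta(\cdot,t)+y)^{**}]_x$ is, as a map into $C^0([0,T];L^1(0,L))$, locally Lipschitz in $(w,\theta)$.

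Next, both nonlinear blocks are locally Lipschitz functions of this quantity. For the $H\overline\xi$-term, the integrand $p\mapsto\sqrt{1+p^2}$ is globally $1$-Lipschitz on $\mathbb{R}$, so $\big|\int_0^L\sqrt{1+p^2}-\int_0^L\sqrt{1+q^2}\big|\le\|p-q\|_1$; combined with the previous step and $\sup_{t}$ this gives the desired local Lipschitz bound. For the $\Gamma$-term, observe $\Gamma(u)=\int_0^L\sqrt{1+\{[(u+y)^{**}]_x\}^2}\,dx-L_c$ is by the same reasoning locally Lipschitz from $C^1[0,L]$ to $\mathbb{R}$, and it is bounded on bounded sets (since $\sqrt{1+p^2}\le 1+|p|$ and $\|[(u+y)^{**}]_x\|_1\le\|u'+y'\|_1$, using that convexification does not increase the $L^1$-norm of the derivative — a consequence of \eqref{f:th2}); then $\tfrac{AE_c}{2L_c}[\Gamma(\cdot)]^2$, being a bounded-on-bounded-sets locally Lipschitz scalar quantity squared, is again locally Lipschitz. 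Summing all contributions and taking $\sup_{t\in[0,T]}$ throughout yields that $\mathcal{E}$ is locally Lipschitz on $X_T^2$. The main obstacle is the middle step: propagating the $L^1$-to-$W^{1,1}$ Lipschitz estimate for the convexification through the nonlinear inner substitution $w\pm\ell\sin\theta$ while keeping all constants uniform in $t$, which is exactly where the regularity built into \eqref{regularity weak} and the stability estimates of Section \ref{Properties of the projection on the cone of convex functions} are used.
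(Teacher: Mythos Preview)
Your proposal is correct and follows essentially the same approach as the paper: decompose $\mathcal{E}$ into pieces, dispatch the linear and quadratic deck terms routinely, and for the convexification-dependent blocks combine the $1$-Lipschitz property of $p\mapsto\sqrt{1+p^2}$ with the $L^1$-Lipschitz stability of the convexification operator (the paper invokes \eqref{f:th2} directly, you route through the Corollary to Proposition~\ref{l:base}, which is equivalent). One cosmetic remark: since $F_u$ and $u+y$ differ by the constant $(u+y)(0)$, their convex envelopes differ by the same constant and hence $[(u+y)^{**}]_x=((F_u)^{**})'$ exactly, with no additive constant left over in the derivative; this only simplifies your argument.
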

\begin{proof}
	The statemant holds if, for every bounded subset $X\subset X^2_T$ there exists $\mathcal{L}>0$ such that, given $(w_1,\theta_1)$ and $(w_2,\theta_2)\in X$
we have
	\small
	\begin{equation}
	|\mathcal{E}(w_1,\theta_1)-\mathcal{E}(w_2,\theta_2)|\leq \mathcal{L}\big(||(w_{1}-w_{2})_t||_{1}+||(\theta_{1}-\theta_{2})_t||_{1}+||w_{1}-w_{2}||_{W^{2,1}}+||\theta_{1}-\theta_{2}||_{W^{2,1}}\big).
	\label{lipschitz}
	\end{equation}
	\normalsize
	By (\ref{energy tot}) we observe that the most tricky terms are those including $\Gamma(\cdot)$ and $\overline{\xi}$, while for the others (\ref{lipschitz}) is easily proved. Let us recall the inequality
	\begin{equation*}
	|\sqrt{1+(u_1+v)^2}-\sqrt{1+(u_2+v)^2}|\leq|(u_1+v)-(u_2+v)|=|u_1-u_2|\hspace{5mm}\forall u_1,u_2,v\in\mathbb{R},
	\label{ineq1}
	\end{equation*}
	that gives
	\begin{equation*}
	\begin{split}
	&\big|\sqrt{1+\{[(w_1\pm\ell\sin\theta_1+y)^{**}]_{x}\}^2}-\sqrt{1+\{[(w_2\pm\ell\sin\theta_2+y)^{**}]_{x}\}^2}\big|\\&\hspace{0mm}\leq \big|[(w_1\pm\ell\sin\theta_1+y)^{**}-(w_2\pm\ell\sin\theta_2+y)^{**}]_x\big|.
	\end{split}
	\label{ineq}
	\end{equation*}
	Hence, it is possible to use \eqref{f:th2} so that there exists $L_1>0$ such that
	\small
	\begin{equation*}
	\begin{split}
	&H\overline{\xi}\int_{0}^{L}\big|\sqrt{1+\{[(w_1\pm\ell\sin\theta_1+y)^{**}]_{x}\}^2}-\sqrt{1+\{[(w_2\pm\ell\sin\theta_2+y)^{**}]_{x}\}^2}\big|dx\\&\leq H\overline{\xi}\big(||(w_1-w_2)_x||_1+\ell||(\sin\theta_1-\sin\theta_2)_x||_1\big)\leq L_1\big(||(w_1-w_2)_x||_1+||\theta_1-\theta_2||_{W^{1,1}}\big).
	\end{split}
	\end{equation*}
	\normalsize
	The same argument can be applied to the terms $[\Gamma(w\pm\ell\sin\theta)]^2$, see (\ref{gamma}).
\end{proof}
This result enables us to use the notion of Clarke subdifferential \cite{clarke} and to compute the variation of (\ref{energy tot}) in the general framework of the differential inclusions. We also point out that some problems in elasticity may be tackled with a slightly different notion of nonsmooth critical points,
see \cite{degiovanni}.

\section{Suspension bridges with convexified cables}
\label{The variational problem}

\subsection{The variation of the deformation energy of the cables}
\label{The variation of the deformation energy of the cables}
The presence of the convexified functions within the functional $\mathcal{E}(w,\theta)$ in (\ref{energy tot}) introduces some difficulties in computing
its variation; from Proposition \ref{p:prop variaz} the unilateral Gateaux derivative exists and is always bounded, while the Gateaux derivative may not exist in some cases. Let us focus on one cable, the other being similar. We introduce

\small \[D^-:= \big[H\overline{\xi}+\frac{AE_c}{L_{c}}\Gamma(w+\ell\sin\theta)\big]\int_{0}^{L}\frac{[(w+\ell\sin\theta+y)^{**}]_{x}(\mathcal{J}^\varphi_-)'}{\sqrt{1+\{[(w+\ell\sin\theta+y)^{**}]_{x}\}^2}}
\hspace{1mm}dx\]
\[D^+:= \big[H\overline{\xi}+\frac{AE_c}{L_{c}}\Gamma(w+\ell\sin\theta)\big]\int_{0}^{L}\frac{[(w+\ell\sin\theta+y)^{**}]_{x}(\mathcal{J}^\varphi_+)'}{\sqrt{1+\{[(w+\ell\sin\theta+y)^{**}]_{x}\}^2}}
\hspace{1mm}dx,\]
\normalsize
 where $\mathcal{J}^\varphi_\pm(x)$ are defined in (\ref{zbar2}) with $f=(w+\ell\sin\theta+y)$. By applying Proposition \ref{p:prop variaz}, we find the following inclusion related to the variation of the energy (\ref{energy cavi}) with respect to $w$ \[\langle dE_C(w,\theta),\varphi\rangle\in \big[\min\{D^-,D^+\},\max\{D^-,D^+\}\big].\]
To avoid this heavy notation, in the sequel we always write
\footnotesize
\begin{equation*}
\begin{split}
\langle dE_C(w,\theta),\varphi\rangle\in
&\big[H\overline{\xi}+\frac{AE_c}{L_{c}}\Gamma(w+\ell\sin\theta)\big]\int_{0}^{L}\frac{[(w+\ell\sin\theta+y)^{**}]_{x}(\mathcal{J}^\varphi_\pm)'}{\sqrt{1+\{[(w+\ell\sin\theta+y)^{**}]_{x}\}^2}}
\hspace{1mm}dx\hspace{5mm}\forall \varphi\in C^{\infty}_c(\mathcal{I}).
\end{split}
\label{variation w energy}
\end{equation*}
\normalsize
By applying Theorem \ref{prop variaz gen} with $\Lambda(\theta)=\sin\theta$
we obtain the inclusion related to the variation of the energy (\ref{energy cavi}) with respect to $\theta$
\footnotesize
\begin{equation*}
\begin{split}
&\langle dE_C(w,\theta),\psi\rangle
\in\big[H\overline{\xi}+\frac{AE_c}{L_{c}}\Gamma(w+\ell\sin\theta)\big]\ell\int_{0}^{L}\frac{[(w+\ell\sin\theta+y)^{**}]_{x}(\mathcal{G}^{\theta,\psi}_\pm)_x}{\sqrt{1+\{[(w+\ell\sin\theta+y)^{**}]_{x}\}^2}}\hspace{1mm}dx \hspace{3mm}\forall \psi\in C^{\infty}_c(\mathcal{I}),
\end{split}
\label{variation teta energy Hc}
\end{equation*}
\normalsize
where $\mathcal{G}^{\theta,\psi}_\pm(x)$ is defined for every $t\geq0$ as follows: given $g_i^\pm\colon K^i \to \overline{\mathbb{R}},$ with $\ i\in J_C$
\footnotesize
\begin{equation}
g_i^\pm(x) :=
\begin{cases}
\psi(x)\cos\big(\theta(x)\big) & x \in K^i \cap (K_f \cup \{a,b\}),\\
\pm\infty & x \in K^i \setminus (K_f \cup \{a,b\}).
\end{cases}\quad
\mathcal{G}^{\theta,\psi}_\pm(x):=
\begin{cases}
\pm(\pm g_i^\pm)^{**}(x) & x \in K^i,\\
\psi(x)\cos\big(\theta(x)\big) & x \in \widetilde{K}_f.
\end{cases}
\label{G theta pm}
\end{equation}
\normalsize

Note that the functions $\mathcal{J}^\varphi_\pm$ and $\mathcal{G}^{\theta,\psi}_\pm$ are spatially continuous with a finite number of angular points, so that $(\mathcal{J}^{\varphi}_\pm)'$ and $(\mathcal{G}^{\theta,\psi}_\pm)_x$ are bounded on the interval $[0,L]$ and continuous almost everywhere in $[0,L]$, see Remark \ref{rmk 2.5}. \par
In the simple cases in which the cable is strictly convex (or concave!) we gain the differentiability of (\ref{energy tot}) and the inclusions become equalities. In the first case, because $\widetilde{K}_f=\mathcal{I}$ so that $\mathcal{J}^\varphi_\pm$ and $\mathcal{G}^{\theta,\psi}_\pm$ coincide respectively with $\varphi$ and $\psi\cos\theta$. In the second case, $K^1=\overline{I}^1=\overline{\mathcal{I}}$ so that $\mathcal{J}^\varphi=\mathcal{G}^{\theta,\psi}=0$ and $(w+\ell\sin\theta+y)^{**}=-y_0$ for all $x\in[0,L]$; this situation corresponds to a zero variation in the cable energy since the slackening of all the hangers occurs, implying the total disconnection between the cable and the deck. We point out that in the case where the cable is perfectly horizontal we obtain the same physical result, due to $(w+\ell\sin\theta+y)_x=[(w+\ell\sin\theta+y)^{**}]_x=0$ for all $x\in[0,L]$, while $\mathcal{I}^\varphi_\pm$ and $\mathcal{G}^{\theta,\psi}_\pm$ maintain their oscillatory nature.

\subsection{The system of partial differential inclusions}\label{The system of partial differential inclusions}

We set here the problem in the general framework of partial differential inclusions, resulting from the variation of (\ref{energy tot}). The
subscripts $\alpha$ and $\beta$ denote the terms corresponding respectively to the cable with shape $(w+\ell\sin\theta+y)^{**}$ and
$(w-\ell\sin\theta+y)^{**}$. Hence, we have $\mathcal{J}^\varphi_{\alpha\pm}(x)$, $\mathcal{G}^{\theta,\psi}_{\alpha\pm}(x)$ and $\mathcal{J}^\varphi_{\beta\pm}(x)$, $\mathcal{G}^{\theta,\psi}_{\beta\pm}(x)$ that correspond to $\mathcal{J}^\varphi_{\pm}(x)$, $\mathcal{G}^{\theta,\psi}_{\pm}(x)$ related respectively to $f_\alpha=(w+\ell\sin\theta+y)$ and $f_\beta=(w-\ell\sin\theta+y)$, as defined in
\eqref{zbar2} and \eqref{G theta pm}. Moreover, we include all the nonlinearities into the functionals

\small
\begin{equation*}
\begin{split}
&h_\alpha(Z,\Theta):=-\bigg(H\overline{\xi}+\frac{AE_c}{L_{c}}\Gamma(Z+\ell\sin\Theta)\bigg)\dfrac{[(Z+\ell\sin\Theta+y)^{**}]_{x}}{\sqrt{1+\{[(Z+\ell\sin\Theta+y)^{**}]_{x}\}^2}}\\&h_\beta(Z,\Theta):=-\bigg(H\overline{\xi}+\frac{AE_c}{L_{c}}\Gamma(Z-\ell\sin\Theta)\bigg)\dfrac{[(Z-\ell\sin\Theta+y)^{**}]_{x}}{\sqrt{1+\{[(Z-\ell\sin\Theta+y)^{**}]_{x}\}^2}}.
\end{split}
\label{h1,h2}
\end{equation*}
\normalsize\par As for the action, one has to take the difference between kinetic energy and potential energy and integrate over an interval of time $[0, T]$:
\footnotesize
\begin{equation*}
\begin{split}
\mathcal{A}(w,\theta):=&\int_0^T\bigg[\int_{0}^{L}\bigg(\frac{M}{2}w_{t}^2\hspace{1mm}+\frac{M\ell^2}{6}\theta_{t}^2\bigg)\hspace{1mm}dx
-\int_{0}^{L}\bigg(\frac{EI}{2}w_{xx}^2+\frac{EJ}{2}\theta_{xx}^2+\frac{GK}{2}\theta_{x}^2\bigg)dx\\
&\hspace{0mm}-H\overline{\xi}\bigg\{\int_{0}^{L}\big(\sqrt{1+\{[(w+\ell\sin\theta+y)^{**}]_{x}\}^2}+\sqrt{1+\{[(w-\ell\sin\theta+y)^{**}]_{x}\}^2}\big)dx-2L_c\bigg\}\\
&\hspace{0mm}-\frac{AE_c}{2L_{c}}\big([\Gamma(w+\ell\sin\theta)]^2+[\Gamma(w-\ell\sin\theta)]^2\big)+Mg\int_{0}^{L}w\hspace{1mm}dx\bigg]dt.
\end{split}
\label{action}
\end{equation*}
\normalsize
The differential inclusion describing the motion of the bridge is obtained by considering the critical points of the functional $\mathcal{A}$, which leads to the following
\begin{definition}
	We say that $(w,\theta)\in X^2_T$, see \eqref{regularity weak}, is a weak solution of the differential inclusion, resulting from critical points of the action $\mathcal{A}$, if $(w,\theta)$ satisfies
	\footnotesize
	\begin{equation}
	\begin{cases}
	M\langle w_{tt},\varphi \rangle_{*}+EI(w,\varphi)_{H^2}-\big( Mg,\varphi\big)_2\in\bigg(h_\alpha(w,\theta),(\mathcal{J}^\varphi_{\alpha\pm})'\bigg)_2+\bigg(h_\beta(w,\theta),(\mathcal{J}^\varphi_{\beta\pm})'\bigg)_2,\\
	 \dfrac{M\ell}{3}\langle\theta_{tt},\psi\rangle_*+\dfrac{EJ}{\ell}(\theta,\psi)_{H^2}+\dfrac{GK}{\ell}(\theta,\psi)_{H^1}\in\bigg(h_\alpha(w,\theta),(\mathcal{G}^{\theta,\psi}_{\alpha\pm})_x\bigg)_2-\bigg(h_\beta(w,\theta),(\mathcal{G}^{\theta,\psi}_{\beta\pm})_x\bigg)_2,
	\end{cases}
	\label{eq weak sist2}
	\end{equation}
	\normalsize
	for all $\varphi, \psi\in H^2\cap H^1_0(0,L)$ and $t>0$.
\end{definition}
The system (\ref{eq weak sist2}) is complemented with the initial conditions:
\begin{equation}
\begin{split}
&w(x,0)=w^0(x),\hspace{4mm} \theta(x,0)=\theta^0(x) \hspace{25mm} {\rm for } \;x\in (0,L)\\
&w_{t}(x,0)=w^1(x),\hspace{4mm} \theta_{t}(x,0)=\theta^1(x) \hspace{23mm} {\rm for } \;x\in (0,L),\\
\end{split}
\label{ics}
\end{equation}
having the following regularity
\begin{equation}
w^0, \theta^0\in H^2\cap H^1_0(0,L), \hspace{5mm}w^1,\theta^1\in L^2(0,L).\\
\label{regularity ics}
\end{equation}

From \cite{cellina1, cellina2, clarke} we learn that existence results for a differential inclusion can be a difficult task, requiring some regularity
of the right hand side terms, e.g.\ the continuity. For our purposes, to approach problem (\ref{eq weak sist2})-(\ref{ics}) in fully generality is
not necessary: since we deal with a civil structure, we perform some simplifications. We follow a suggestion from \cite[p.23]{bleich} which says that
{\em ``\dots out of the infinite number of possible modes of motion in which a suspension bridge might vibrate, we are interested only in a few, to wit: the ones having the smaller numbers of loops or half waves''}. Indeed, civil structures typically oscillate on low modes since higher modes do not appear in
real situations due to large bending energy, see \cite{bleich}. This suggestion mathematically corresponds to project an infinite dimensional space
on a finite dimensional subspace, using the Galerkin approximation.
Whence, we take $\{e_k\}^{\infty}_{k=1}$ as an orthogonal basis of $L^2(0,L)$, $H^1_{0}(0,L)$, $H^2\cap H^1_{0}(0,L)$, where
\begin{equation*}
e_{k}(x)=\sqrt{\frac{2}{L}}\sin\bigg(\frac{k\pi x}{L}\bigg), \hspace{5mm}|| e_{k}||_{2}=1,\hspace{5mm}||e_{k}||_{H^1}=\frac{k\pi}{L},\hspace{5mm}|| e_{k}||_{H^2}=\frac{k^2\pi^2}{L^2},
\end{equation*}
and, for any $n\geq 1$, we introduce the space $E_n:={\rm span} \{e_1,\dots,e_n\}$. For any $n\geq 1$ we seek a couple $(w_n,\theta_n)\in X_T^2$ such that
\begin{equation*}
w_n(x,t)=\sum_{k=1}^{n} w_n^k(t) \hspace{1mm}e_k(x), \hspace{5mm} \theta_n(x,t)=\sum_{k=1}^{n} \theta_n^k(t) \hspace{1mm}e_k(x),
\label{finito dim}
\end{equation*}
and satisfying (\ref{eq weak sist2}) only for the test functions $\varphi, \psi\in E_n$, thereby obtaining a finite system of ordinary differential inclusions. In fact, in this finite dimensional setting, the inclusions become equalities, since for every fixed $n\in\mathbb{N}$, all the intervals of affinity (if any) of $(w_n\pm\ell\sin\theta_n+y)^{**}$ are such that $(w_n\pm\ell\sin\theta_n+y)>(w_n\pm\ell\sin\theta_n+y)^{**}$. Then Corollary \ref{prop variaz} applies and the Gateaux derivative exists, leading to a finite system of ordinary differential equations ($r=1,\dots,n$) with initial conditions

\footnotesize
\begin{equation}\label{eq weak sist gal}
\begin{cases}
M\big((w_n)_{tt},e_r\big)_2+EI\big(w_n,e_r\big)_{H^2}-(Mg,e_r)_2=\\\hspace{25mm}\bigg(h_\alpha (w_n,\theta_n),[\mathcal{J}^{e_r}_\alpha]'\bigg)_2+\bigg(h_\beta(w_n,\theta_n),[\mathcal{J}^{e_r}_\beta]'\bigg)_2\vspace{3mm}\\
\dfrac{M\ell}{3}\big((\theta_n)_{tt},e_r\big)_2+\dfrac{EJ}{\ell}\big(\theta_n,e_r\big)_{H^2}+\dfrac{GK}{\ell}\big(\theta_n,e_r\big)_{H^1}=\\\hspace{25mm}\bigg(h_\alpha (w_n,\theta_n),[\mathcal{G}^{\theta_n,e_r}_\alpha]_x\bigg)_2-\bigg(h_\beta (w_n,\theta_n),[\mathcal{G}^{\theta_n,e_r}_\beta]_x\bigg)_2,\\
w^k_n(0)=(w^0,e_k)_2, \hspace{5mm}\theta^k_n(0)=(\theta^0,e_k)_2\hspace{5mm}
\dot{w}^k_n(0)=(w^1,e_k)_2,\hspace{5mm} \dot{\theta}^k_n(0)=(\theta^1,e_k)_2.
\end{cases}
\end{equation}
\normalsize

In Section \ref{Proof of the Theorem 2} we prove
\begin{teo}
	Let $n\geq 1$ an integer and $T>0$ (possibly $T=\infty$), then for all $w^0, \theta^0, w^1,\theta^1$ satisfying (\ref{regularity ics}) there exists a unique solution $(w_n,\theta_n)\in X_T^2$ of \eqref{eq weak sist gal}.
	\label{teo1}
\end{teo}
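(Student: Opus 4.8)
The plan is to reduce the statement to a standard existence–uniqueness result for systems of ODEs with a locally Lipschitz right-hand side, combined with a global a~priori bound that rules out blow-up in finite time. First I would set up the unknowns as the vector $U(t) = (w_n^1(t),\dots,w_n^n(t),\theta_n^1(t),\dots,\theta_n^n(t)) \in \mathbb{R}^{2n}$ and rewrite \eqref{eq weak sist gal} as a second-order system $\ddot{U} = \mathcal{F}(U)$ (the mass matrix is diagonal and positive, so it inverts trivially), then pass to the first-order system for $(U,\dot U)\in\mathbb{R}^{4n}$. The linear terms $EI(w_n,e_r)_{H^2}$, $\frac{EJ}{\ell}(\theta_n,e_r)_{H^2}$, $\frac{GK}{\ell}(\theta_n,e_r)_{H^1}$ and the constant forcing $(Mg,e_r)_2$ are manifestly smooth in $U$; the work is entirely in the nonlinear coupling terms $\big(h_\alpha(w_n,\theta_n),[\mathcal{J}^{e_r}_\alpha]'\big)_2$ and its three companions.

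The key point, already flagged in the text preceding the statement, is that in the finite-dimensional setting the inclusions of \eqref{eq weak sist2} collapse to equalities: for fixed $n$, the functions $w_n\pm\ell\sin\theta_n+y$ are real-analytic in $x$ (finite trigonometric sums plus the parabola $y$), so on any maximal interval where $(w_n\pm\ell\sin\theta_n+y)^{**}$ is affine one has strict inequality $(w_n\pm\ell\sin\theta_n+y) > (w_n\pm\ell\sin\theta_n+y)^{**}$ in the interior — an affine function cannot coincide with a non-affine analytic function on a set with an interior point. Hence condition \eqref{noflat} holds and Corollary~\ref{prop variaz} gives $\mathcal{J}^{e_r}_{\alpha+}=\mathcal{J}^{e_r}_{\alpha-}=\mathcal{J}^{e_r}_\alpha$, and similarly for the $\beta$-terms and the $\mathcal{G}$-terms, so the right-hand sides are genuine functions of $U$, not set-valued. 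To obtain local Lipschitz dependence on $U$ I would invoke Proposition~\ref{l:base} and its corollary (the convexification is Lipschitz from $W^{1,1}_0$ to $W^{1,1}_0$), together with Proposition~\ref{lipschitz Iphi} and Proposition~\ref{rmk} (parts i)--ii)), which were explicitly stated to be "crucial for the existence and uniqueness result in Section~\ref{The system of partial differential inclusions}": they bound differences of the products $h_\gamma(Tf)(\mathcal{J}^\varphi_F)'$ and $h_\gamma(Tf)(\mathcal{G}^{F,\psi})'$ by $\|f-g\|_1$ and $\|F-G\|_{W^{1,1}}$. Since on $E_n$ all norms are equivalent to the Euclidean norm on the coefficient vector, and since $\Gamma$ is locally Lipschitz (Proposition~\ref{lipschitz functional} already essentially proves this), composing these estimates yields: $\mathcal{F}$ is locally Lipschitz on $\mathbb{R}^{2n}$. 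The Picard–Lindelöf theorem then gives a unique maximal solution on some $[0,T_{\max})$ with the prescribed initial data, which lie in $E_n$ after projection and hence are admissible.

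It remains to show $T_{\max}=\infty$ (or $T_{\max}\ge T$), i.e.\ a global a~priori estimate. Here I would use conservation (or near-conservation) of energy: testing \eqref{eq weak sist gal} with $\dot w_n^r$, $\dot\theta_n^r$ and summing over $r$, the nonlinear cable terms reassemble into $\frac{d}{dt}$ of the cable deformation energy in \eqref{energy cavi} — this is exactly the content of the variational derivation, using that on $E_n$ the Gateaux derivative is the honest one — so that $\mathcal{E}(w_n,\theta_n)(t)$ equals its value at $t=0$ (the system is conservative; the only non-quadratic terms, involving $\sqrt{1+(\cdot)^2}$ and $\Gamma^2$, are nonnegative). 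This controls $\|(w_n)_t\|_2$, $\|(\theta_n)_t\|_2$, $\|w_n\|_{H^2}$, $\|\theta_n\|_{H^2}$ uniformly in $t$, bounding $|U(t)|+|\dot U(t)|$, which precludes finite-time blow-up. The main obstacle is the middle step — establishing that $\mathcal{F}$ is locally Lipschitz in $U$ despite the convexification operation — but this is precisely what Propositions~\ref{l:base}--\ref{rmk} were engineered to deliver; the energy argument for globality is then routine, the only subtlety being to verify that differentiating $t\mapsto\mathcal{E}(w_n(t),\theta_n(t))$ is legitimate, which follows because along the finite-dimensional flow the map is $C^1$ in $t$ (the convexification of a smoothly $t$-varying analytic-in-$x$ family under hypothesis \eqref{noflat} depends continuously, by Proposition~\ref{Iphi conv}, on the data).
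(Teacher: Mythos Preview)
Your proposal is correct and follows essentially the same approach as the paper: reduce to a finite ODE system, establish local Lipschitz continuity of the nonlinear cable terms via Propositions~\ref{l:base}--\ref{rmk} (the paper uses specifically Lemma~\ref{lipschitz nonlin} and Proposition~\ref{rmk} parts i)--ii)), apply Picard--Lindel\"of, then obtain globality from energy conservation after testing by $\dot w_n$, $\dot\theta_n$ and using Corollary~\ref{prop variaz} and Theorem~\ref{prop variaz gen} under \eqref{noflat}. Your analyticity justification for \eqref{noflat} is a welcome addition that the paper leaves implicit.
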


This justifies the following

\begin{definition}\label{defapproximable}
For all $n\in\mathbb{N}$, we say that the solution $(w_n,\theta_n)$ of \eqref{eq weak sist gal} is an \textbf{approximate solution} of (\ref{eq weak sist2})-(\ref{ics}).
We say that $(w,\theta)\in X_T^2$ is an \textbf{approximable solution} of (\ref{eq weak sist2})-(\ref{ics}) if there exists a sequence of approximate solutions of (\ref{eq weak sist2})-(\ref{ics}), converging to it as $n\rightarrow\infty$, up to a subsequence.
\end{definition}

We now state the main result of this section, whose proof is given in Section \ref{Proof of the Theorem 2}.

\begin{teo}
	Let $T>0$ (possibly $T=\infty$), then for all $w^0, \theta^0, w^1,\theta^1$ satisfying (\ref{regularity ics}) there exists an approximable solution of (\ref{eq weak sist2}) which satisfies (\ref{ics}) on $[0,T]$.
	\label{corol inclusione}
\end{teo}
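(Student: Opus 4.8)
\textbf{Proof strategy for Theorem \ref{corol inclusione}.}
The plan is to pass to the limit in the Galerkin equations \eqref{eq weak sist gal}, exploiting the a priori bounds coming from the (conserved) energy $\mathcal{E}$ and the compactness results of Section \ref{s:convex}. First I would fix $n\geq 1$ and use Theorem \ref{teo1} to obtain the approximate solution $(w_n,\theta_n)\in X_T^2$. Testing \eqref{eq weak sist gal} with $\dot w_n^r$ and $\dot\theta_n^r$ and summing over $r=1,\dots,n$ produces the energy identity for the Galerkin system; since all the terms $h_\alpha$, $h_\beta$, $\Gamma$ and the quadratic form are exactly the Gateaux derivative of $\mathcal{E}(w_n,\theta_n)$ in the finite-dimensional setting (Corollary \ref{prop variaz}), this yields $\mathcal{E}(w_n(t),\theta_n(t))=\mathcal{E}(w_n(0),\theta_n(0))$ for all $t\in[0,T]$. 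Because $\sqrt{1+(\cdot)^2}\geq 1$ and the $[\Gamma(\cdot)]^2$ terms are nonnegative, the energy controls $\|(w_n)_t\|_2^2+\|(\theta_n)_t\|_2^2+\|w_n\|_{H^2}^2+\|\theta_n\|_{H^2}^2$ uniformly in $n$ and $t$ (the linear term $-Mg\int w$ and the boundedness of the cable terms are absorbed, using $w_n\in H^1_0$ to bound $\int w_n$ by $\|w_n\|_{H^2}$ and Young's inequality). Hence $\{(w_n,\theta_n)\}$ is bounded in $L^\infty(0,T;H^2\cap H^1_0)$ and $\{((w_n)_t,(\theta_n)_t)\}$ in $L^\infty(0,T;L^2)$; returning to the equations, $\{((w_n)_{tt},(\theta_n)_{tt})\}$ is bounded in $L^\infty(0,T;H^*)$ because the right-hand sides are bounded in $L^2$ (the nonlinearities $h_\alpha,h_\beta$ are bounded by $H\overline{\xi}+\frac{AE_c}{L_c}\|\Gamma\|_\infty$, and $(\mathcal{J}^{e_r}_{\alpha\pm})'$, $(\mathcal{G}^{\theta_n,e_r}_{\alpha\pm})_x$ are bounded for fixed $r$).

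Next I would extract, by Banach--Alaoglu and Aubin--Lions, a subsequence (not relabelled) with $(w_n,\theta_n)\to(w,\theta)$ weakly-$*$ in $L^\infty(0,T;H^2\cap H^1_0)$, strongly in $C^0([0,T];H^1_0)$ and, crucially, such that $(w_n\pm\ell\sin\theta_n+y)\to(w\pm\ell\sin\theta+y)$ uniformly on $[0,L]\times[0,T]$; the limit $(w,\theta)$ lies in $X_T^2$ with the required regularity and satisfies the initial conditions \eqref{ics} by the $C^1$-in-time bounds. The passage to the limit in the linear terms (inertia, bending, torsion, gravity) is standard. For the cable terms I would fix a test index $r$ and a time $t$: by Proposition \ref{l:base} and its Corollary the map $(w\pm\ell\sin\theta+y)\mapsto(w\pm\ell\sin\theta+y)^{**}$ and its $x$-derivative are continuous in $W^{1,1}$, so $h_\alpha(w_n,\theta_n)\to h_\alpha(w,\theta)$ and likewise for $\beta$ in $L^1$ (and, being bounded, in every $L^p$, $p<\infty$); by Proposition \ref{Iphi conv}, $(\mathcal{J}^{e_r}_n)'\to(\mathcal{J}^{e_r})'$ and $(\mathcal{G}^{\theta_n,e_r}_n)_x\to(\mathcal{G}^{\theta,e_r})_x$ in $L^1$. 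One subtlety is that Proposition \ref{Iphi conv} requires the limit profile to satisfy \eqref{noflat}, which need not hold — this is exactly why the limiting relation is an \emph{inclusion}: where \eqref{noflat} fails the two one-sided variations $\mathcal{J}^{\varphi}_\pm$ differ, and the products $h_\alpha(w_n,\theta_n)(\mathcal{J}^{e_r}_{n})'$ accumulate at a point of the segment $[\min\{D^-,D^+\},\max\{D^-,D^+\}]$ described in Section \ref{The variation of the deformation energy of the cables}. Using Proposition \ref{lipschitz Iphi} (and Proposition \ref{rmk}) the bilinear quantities $\int_0^L h_\alpha(w_n,\theta_n)(\mathcal{J}^{e_r}_n)'\,dx$ are uniformly Lipschitz in the data, so their $t$-dependence is equicontinuous and one may pass to the limit uniformly on $[0,T]$; the limit of the right-hand side is then shown to belong to the right-hand side set of \eqref{eq weak sist2} for the test functions $e_r$, and by density (and continuity of both sides in $\varphi,\psi\in H^2\cap H^1_0$) for all admissible test functions.

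Finally I would check that $(w,\theta)$ is genuinely an approximable solution in the sense of Definition \ref{defapproximable}: it is the $n\to\infty$ limit, along a subsequence, of the approximate solutions $(w_n,\theta_n)$, and it satisfies \eqref{eq weak sist2}–\eqref{ics}. The main obstacle is precisely the non-differentiability encoded in the convexification: the right-hand sides of the Galerkin equations are \emph{not} the values of a fixed continuous functional evaluated along $(w_n,\theta_n)$ — the test functions $\mathcal{J}^{e_r}_n$, $\mathcal{G}^{\theta_n,e_r}_n$ themselves depend on the (convexified) solution in a way that is only one-sided continuous — so the limit identity degrades to the inclusion \eqref{eq weak sist2}, and the work is to show the limit of the products still lands inside the prescribed interval. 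This is handled by combining the uniform convergence of the profiles $w_n\pm\ell\sin\theta_n+y$ with Propositions \ref{l:base}, \ref{lipschitz Iphi}, \ref{rmk} and \ref{Iphi conv}, using the fact that on any flat interval of the limit where \eqref{noflat} fails the one-sided limits bracket all possible accumulation values. The remaining arguments — extracting the diagonal subsequence, verifying the energy inequality (or identity) for the limit, and the density argument in the test space — are routine.
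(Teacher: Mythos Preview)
Your strategy is correct in outline and would work, but it takes a more elaborate route than the paper's proof. The key simplification you miss is the integral identity hidden in the proofs of Propositions~\ref{lipschitz Iphi} and~\ref{rmk} (see~\eqref{crucial3}): since $h_\alpha(w_n,\theta_n)$ is a function of $[(w_n+\ell\sin\theta_n+y)^{**}]_x$ alone, it is \emph{constant} on each affine interval $K^i$ of the convexification, and therefore
\[
\int_0^L h_\alpha(w_n,\theta_n)\,(\mathcal{J}^{e_k}_{\alpha,n})'\,dx
=\int_0^L h_\alpha(w_n,\theta_n)\,e_k'\,dx
\]
for every $n$ (and analogously $\int h_\alpha(\mathcal{G}^{\theta_n,e_k}_{\alpha,n})_x=\int h_\alpha(e_k\cos\theta_n)'$). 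This replaces the $n$-dependent modified test functions by the \emph{fixed} ones $e_k'$ and $(e_k\cos\theta_n)'$, so the passage to the limit reduces to identifying the weak $L^2(Q)$ limit of $\chi\big([w_n\pm\ell\sin\theta_n+y]^{**}\big)$ (bounded by $1$) and the scalar factor $\Gamma$, which the paper handles directly. In particular the whole discussion of whether the limit profile satisfies \eqref{noflat}, the appeal to Proposition~\ref{Iphi conv}, and the ``accumulation inside the segment'' argument become unnecessary: the same identity shows $D^+=D^-$ for this specific $h_\alpha$, so the inclusion is in fact an equality at the integral level. Your approach --- uniform convergence of the profiles plus Proposition~\ref{Iphi conv} --- buys robustness for nonlinearities that are \emph{not} constant on affine pieces, but that generality is not needed here; conversely, the paper's identity is shorter and also resolves your density step (the right-hand side is visibly linear and continuous in $\varphi$ once written as $\int h_\alpha\varphi'$, whereas continuity of $\varphi\mapsto\mathcal{J}^\varphi_\pm$ in $H^2$ is not obvious otherwise).
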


\begin{remark}
	We refer to \cite{tesi} for some consequences in existence and uniqueness results when we consider the same problem with variable $\xi(x)$, i.e. not assuming \eqref{xi}.	The results obtained on $(w_n,\theta_n)$ can be achieved in the same way considering a different number of modes
	for $w$ and $\theta$, i.e.\ taking $(w_n, \theta_\nu)$ with $n\neq \nu$.
\end{remark}


\section{Numerical results}
\label{Numerical analysis}

In this section we present some numerical experiments on the system \eqref{eq weak sist gal}. The results are obtained with the software Matlab$^{\textregistered}$, adopting the mechanical constants of the TNB in Table \ref{tab mec}.
\begin{table}[h]
	\begin{center}
		\begin{tabular}{*{3}{c}}
			\hline
			$E$:&200\hspace{1mm}000MPa& Young modulus of the deck (steel)\\
			$E_c$:&185\hspace{1mm}000MPa& Young modulus of the cables (steel)\\
			$G$:&81\hspace{1mm}000MPa& Shear modulus of the deck (steel)\\
			$L$:&853.44m& Length of the main span\\
			$\ell:$&6m& Half width of the deck\\
			$f$:&70.71m& Sag of the cable\\
			$I$:&0.154m$^4$& Moment of inertia of the deck cross section\\
			$K$:&6.07$\cdot10^{-6}$m$^4$& Torsional constant of the deck\\
			$J$:&5.44m$^6$& Warping constant of the deck\\
			$A$:&0.1228m$^2$& Area of the cables section\\
			$M$:&7198kg/m&Mass linear density of the deck \\
			$H$:&45\hspace{1mm}413kN& Initial tension in the cables\\
			$L_c$:&868.815m& Initial length of the cables, see (\ref{gamma})\\
			\hline
		\end{tabular}
	\end{center}
	\caption{TNB mechanical constants.}
	\label{tab mec}
\end{table}

When we speak about a mode like $\sin\big(\frac{k\pi }{L}x\big)$, we refer to a motion with $k-1$ nodes, in which the latter are the zeros of the sine function in $(0,L)$. Let us recall some meaningful witnesses that led our modeling choices. From
\cite[p.28]{TNB} we know that for the TNB {\em ``seven different motions have been definitely identified on the main span of the bridge''}.
The morning of the failure Farquharson, a witness of the collapse described a torsional motion like $\sin\big(\frac{2\pi }{L}x\big)$, writing
\cite[V-2]{TNB} {\em ``a violent change in the motion was noted. [\dots] the motions, which a moment before had involved a number of waves (nine or ten) had shifted almost instantly to two [\dots] the node was at the center of the main span and the structure was subjected to a violent torsional action about this point''}.\par
By Theorem \ref{corol inclusione} we may consider an approximable solution of \eqref{eq weak sist2} and decide how many modes to include in the finite
dimensional approximation. From \cite{TNB} we learn that, at the TNB, oscillations with more than 10 nodes on the main span were never seen. Hence,
we consider the first 10 longitudinal modes and the first 4 torsional modes; this is a good compromise between limiting
computational burden and focusing on the instability phenomena visible at TNB. Further experiments with a larger number of modes did not highlight
significant changes in the instability thresholds.
Given the boundary conditions, we seek solutions of (\ref{eq weak sist gal}) in the form
\begin{equation}
w(x,t)=\sum_{k=1}^{10} w_k(t) \hspace{1mm}e_k, \hspace{5mm} \theta(x,t)=\sum_{k=1}^{4} \theta_k(t) \hspace{1mm}e_k
\label{approx}
\end{equation}
where $e_{k}(x)=\sqrt{\dfrac{2}{L}}\sin\bigg(\dfrac{k\pi x}{L}\bigg)$ and  $\sqrt{\dfrac{2}{L}}$ is a pure number with no unit of measure;
we call $\overline{w}_k(t):=\sqrt{\frac{2}{L}}w_k(t)$ the \textbf{k-th longitudinal mode} and $\overline{\theta}_k(t):=\sqrt{\frac{2}{L}}\theta_k(t)$ the
\textbf{k-th torsional mode}. We obtain a system of 14 ODEs as (\ref{eq weak sist lin gal  2n}) with initial conditions
\begin{equation*}
\begin{split}
&w_k(0)=w^0_k=(w^0,e_k)_2, \hspace{10mm}\dot{w}_k(0)=w^1_k=(w^1,e_k)_2,\hspace{5mm}\forall k=1,\dots,10\\&\theta_k(0)=\theta^0_k=(\theta^0,e_k)_2,\hspace{13mm}
\dot{\theta}_k(0)=\theta^1_k=(\theta^1,e_k)_2\hspace{9.5mm}\forall k=1,\dots,4.
\end{split}
\end{equation*}
We put $\overline{w}_k^0:=\sqrt{\frac{2}{L}}w_k^0$, $\overline{w}_k^1:=\sqrt{\frac{2}{L}}w_k^1$ and similarly for the $\theta$ initial conditions. Since we study an isolated system we assume that on the bridge there is a balance between damping and wind on an interval $[0,T]$ for sufficiently small $T>0$.
We consider a time lapse of $[0,120s]$, which is a small time interval compared to 70 minutes of violent oscillations recorded prior to the TNB collapse \cite{TNB}, enough to see the possible sudden transfer of energy between modes. We study the system during its steady motion, in which the oscillation of a $j$-th longitudinal mode prevails, and we perturb all the other modes with an initial condition 10$^{-3}$ smaller, i.e.\ in dimensionless form
\begin{equation*}
\begin{split}
&\overline{w}_{k}^0=10^{-3}\cdot \overline{w}_{j}^0\hspace{5mm}\forall k\neq j,\hspace{15mm}\overline{\theta}_{k}^0=\overline{w}_{k}^1=\overline{\theta}_{k}^1=10^{-3}\cdot \overline{w}_{j}^0\hspace{5mm}\forall k.
\end{split}
\label{ic22}
\end{equation*}
Following this approach we say that the initial energy of our system corresponds to that of the longitudinal mode excited and represents, indirectly, the wind energy introduced on the bridge through the so-called vortex shedding (see e.g.\ \cite{wang,xin,zampoli} and the monograph \cite{Paidoussis}), although in
the present paper the energy will be inserted through the initial conditions in a conservative system.\par
According to the Report \cite[p.20]{TNB}, in the months prior to the collapse, {\em ``one principal mode of oscillation prevailed and the modes of
	oscillation frequently changed''}. Therefore, we follow \cite{garrione} and we consider that the approximate solution \eqref{approx} has an initially
prevailing longitudinal mode, that is, there exists $j=1,...,10$ such that $w_j(0)$ is much larger than all the other initial data (both longitudinal and torsional). Then the $j$-th longitudinal mode is torsionally stable if all the torsional components $\theta_{k}(t)$ remain small for all $t$. In our analysis we aim to be more precise and we give a {\em quantitative} characterization of ``smallness''.
We consider thresholds of instability following \cite{garrione} and we say that the $j$-th longitudinal mode is \textbf{torsionally unstable} if at least one torsional mode grows about 1 order in amplitude in the time lapse $[0,120s]$. From a numerical point of view we define the \textbf{threshold of instability} of the j-th longitudinal mode excited as
\begin{equation}
W^0_{j}:=\bigg\{\inf \overline{w}^0_j:\hspace{2mm}\max\limits_{k}\bigg\{\max\limits_{t\in[0,T]}|\overline{\theta}_{k}(t)|\bigg\}\geq 10^{-2}\cdot\overline{w}_{j}^0\bigg\};
\label{soglia}
\end{equation}
this condition allows us to obtain thresholds $W^0_j$ accurate enough for our purposes.\par
As explained in Section \ref{energia convex}, through the convexification procedure, we are able to simulate the slackening of the hangers. To measure the slackening quantity occurring in our numerical experiments, we
identify the two cables by the subscripts $\alpha$ and $\beta$ as in Section \ref{The system of partial differential inclusions}, and we recall that, in the numerical discretization, $[0,T]$ is equally divided in $m$ time steps; for each time step we
compute $\Delta t_h$ ($h=1,\dots,m$), a measure of the percentage of slacken hangers, i.e.\ the ratio between the measure of the union of the intervals
of linearity for each cable and the length of the deck $L$:
\begin{equation*}
\mathcal{M}^\alpha_h:=\dfrac{1}{L}\left|\bigcup\limits_{i\in J_\alpha}I^i_{\alpha}\right|\hspace{10mm}
\mathcal{M}^\beta_h:=\dfrac{1}{L}\left|\bigcup\limits_{j\in J_\beta}I^j_{\beta}\right|\hspace{10mm}\forall h=1,\dots,m.
\end{equation*}
Since the angle of rotation is small, the two cables behave quite similarly and, therefore, we define a mean value of the \textbf{measure of slackening} as
\begin{equation}
\mathcal{M}=\dfrac{1}{2m}\bigg[\sum\limits_{h=1}^{m}\mathcal{M}^\alpha_h+\sum\limits_{h=1}^{m}\mathcal{M}^\beta_h\bigg].
\label{misura}
\end{equation}

Our purpose is to compare the instability thresholds of the model with convexification to those of the same model without convexification,
see \cite{falocchi2}, i.e.\ we study how the slackening of the hangers affects the system. In Table \ref{tab num} we have this comparison in terms of initial energy and amplitude threshold of instability of the $j$-th longitudinal mode excited, computed following (\ref{soglia}).
For each numerical experiment we verified the energy conservation, ascertaining a relative error, $|(\max \mathcal{E}(t)-\min \mathcal{E}(t))/\mathcal{E}(0)|$, on the integration time $[0,120s]$, less than $4\cdot 10^{-3}$.\par
\begin{table}[h]\centering
	\begin{tabular}{|c|c|c|c|c|c|}
		\hline
		&\multicolumn{3}{c|}{\textbf{Convexification}}&\multicolumn{2}{c|}{\textbf{No convexification}}\\
		&\multicolumn{3}{c|}{(Slackening)}&\multicolumn{2}{c|}{(Rigid hangers)}\\
		\cline{2-6}
		Mode&$W^0_{j}$[m]&$\mathcal{E}(0)$[J]&$\mathcal{M}$[\%]&$W^0_{j}$[m]&$\mathcal{E}(0)$[J]\\
		\hline
		1&4.09&7.96$\cdot 10^7$&1.92&4.09&7.96$\cdot 10^7$\\
		\hline
		2&8.37&8.74$\cdot 10^7$&2.94&8.22&8.37$\cdot 10^7$\\\hline
		3&4.89&8.58$\cdot 10^7$&2.40&4.82&8.23$\cdot 10^7$\\\hline
		4&5.35&1.63$\cdot 10^8$&41.79&4.92&1.35$\cdot 10^8$\\\hline
		5&4.25&1.77$\cdot 10^8$&39.40&3.93&1.52$\cdot 10^8$\\\hline
		6&3.64&1.64$\cdot 10^8$&43.46&2.64&8.72$\cdot 10^7$\\\hline
		7&3.65&2.38$\cdot 10^8$&51.72&5.25&8.29$\cdot 10^8$\\\hline
		8&3.28&2.27$\cdot 10^8$&50.05&5.15&1.12$\cdot 10^9$\\\hline
		9&2.31&1.54$\cdot 10^8$&42.55&3.87&7.40$\cdot 10^8$\\\hline
		10&2.65&2.34$\cdot 10^8$&52.73&3.41&6.97$\cdot 10^8$\\
		\hline
	\end{tabular}
	\caption{Thresholds of instability as in (\ref{soglia}), corresponding energy and measure of slackening as in (\ref{misura}), varying the longitudinal mode excited on $[0,120s]$.}
	\label{tab num}
\end{table}
From the data in Table \ref{tab num} we notice different tendencies depending on the mode excited. The first 3 longitudinal modes give substantially the same thresholds of instability in the case with convexification and without, due to the very low percentage of slackening, see $\mathcal{M}$. This fact is not surprising, since a longitudinal motion of the deck like a $\sin(\frac{\pi}{L}x)$ modifies the convexity of the cable only for very large displacements, requiring a so large amount of energy that the threshold of instability is achieved before the appearance of slackening.\par
\begin{figure}[htbp]
	\centering
	\includegraphics[width=13cm]{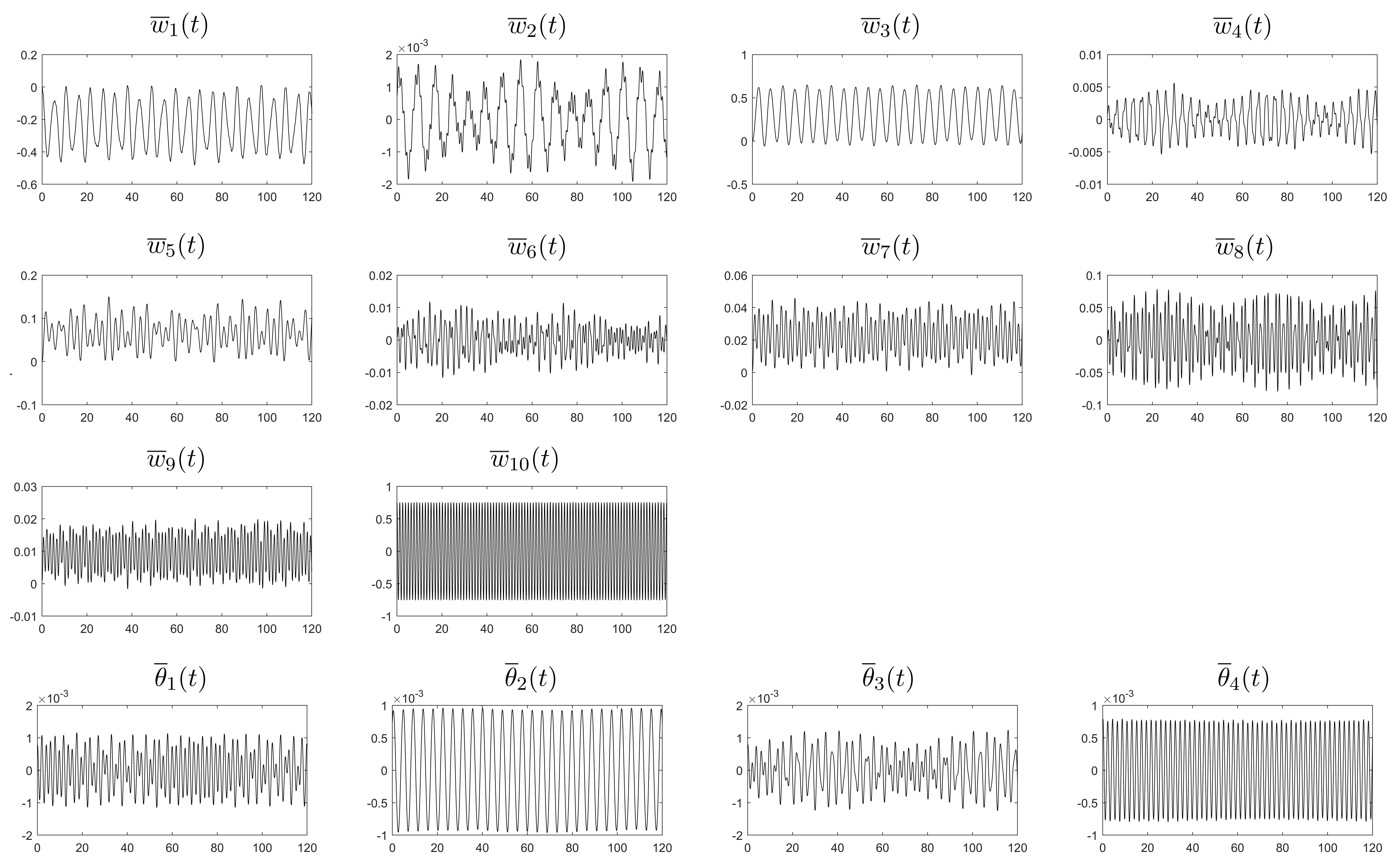}
	\caption{Plots of $\overline{w}_k(t)$ ($k=1,\dots,10$) in meters and $\overline{\theta}_k(t)$ ($k=1,\dots,4$) in radians on $[0,120s]$ with $\overline{w}_{10}^0$=0.75m.}
	\label{analisi stab}
\end{figure}
Quite different is the behavior of the modes from the 4$^{th}$ onward, since in these cases we appreciate differences between two models. We distinguish two tendencies respectively for the intermediate modes (4$^{th}$, 5$^{th}$ and 6$^{th}$) and the higher modes. The thresholds of the intermediate modes reveal that the instability arises earlier for the model with inextensible hangers, so that the latter can be adopted in favor of safety. We point out that the 4$^{th}$, 5$^{th}$ and 6$^{th}$ modes were not seen the day of the collapse of the TNB; the witnesses recorded that, before the rise of the torsional instability, the bridge manifested longitudinal oscillations with 9 or 10 waves, involving the motion of higher longitudinal modes. For these modes the presence of the slackening puts down the thresholds of instability so that the assumption of rigid hangers is not in favor of safety. We underline that in these cases we see more instability despite the injection of energy is smaller; this behavior is peculiar of the hangers slackening that favors a greater transfer of energy between modes with respect to the case with inextensible hangers, see also \cite{falocchi}.
The results in Table \ref{tab num} highlight furthermore that the 9$^{th}$ and $10^{th}$ longitudinal modes present the lowest torsional instability threshold in the case with slackening, confirming the real observations at the TNB collapse.\par
In Figure \ref{analisi stab} we exhibit an example of stability obtained on the system with convexification, imposing $\overline{w}_{10}^0=$0.75m; we notice a very little exchange of energy between modes and, in general, the torsional modes oscillate around their initial amplitude, revealing a stable behavior. In this case some slack is present ($\mathcal{M}=13.50\%$), while reducing further the initial amplitude, e.g.  $\overline{w}_{9}^0\leq$0.60m or $\overline{w}_{10}^0\leq$0.55m, would produce the total absence of slackening and a clear stable situation, see \cite{falocchi2}. \par
For brevity in Figure \ref{analisi instab} we present only the torsional modes related to the instability thresholds of the 9$^{th}$ and 10$^{th}$ longitudinal modes, obtained respectively applying $\overline{w}_{9}^0=$2.31m and $\overline{w}_{10}^0=$2.65m. In general, when (\ref{soglia}) is verified all the torsional modes begin to grow, but Figure \ref{analisi instab} confirms that the 9$^{th}$ and 10$^{th}$ longitudinal modes are more prone to develop instability on the 2$^{nd}$ torsional mode, since it attains the largest growth on $[0,120s]$.\par
\begin{figure}[htbp]
	\centering
	\includegraphics[width=13cm]{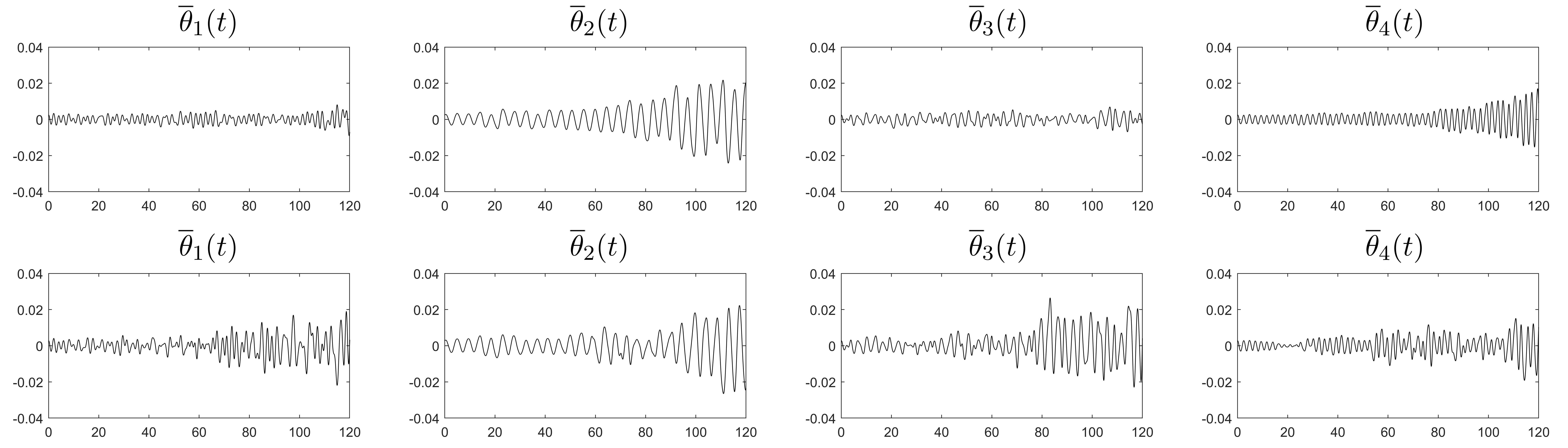}
	\caption{$\overline{\theta}_k(t)$ ($k=1,\dots,4$) in rad. on $[0,120s]$ with $\overline{w}_9^0$=2.31m (above) and $\overline{w}_{10}^0$=2.65m (below).}
	\label{analisi instab}
\end{figure}
Our numerical results show that structures displaying only low modes of vibration may be treated assuming inextensible hangers; this simplification reduces the computational costs and gives safe instability thresholds. On the other hand, if the structure vibrates on higher modes, this assumption could give overestimated thresholds to the detriment of safety; in this case the slackening of the hangers plays an important role. This fact should be a warning for the designers of bridges that are able to exhibit, in realistic situations, large vibration frequencies.

\section{Proofs of the results on the convexification}
\label{proof prop3}

The proofs of the results of Section~\ref{s:convex}
require some basic tools of convex analysis (see e.g.\ \cite{temam2,Rock}).
Given a closed convex set $E\subset\mathbb{R}^n$,
a point $p \in \partial E$ is an extreme point of $E$
if it is not contained in any open segment $]r,q[$ with $r,q\in\partial E$,
whereas it is an exposed point of $E$ if there exists
a support hyperplane $H$ to $E$ with $H\cap E = \{p\}$.
We denote by $\extr E$ and $\expo E$, respectively,
the sets of extremal and exposed points of $E$, see Figure \ref{expoE}.
\begin{figure}[h]
	\centering
	\includegraphics[width=13cm]{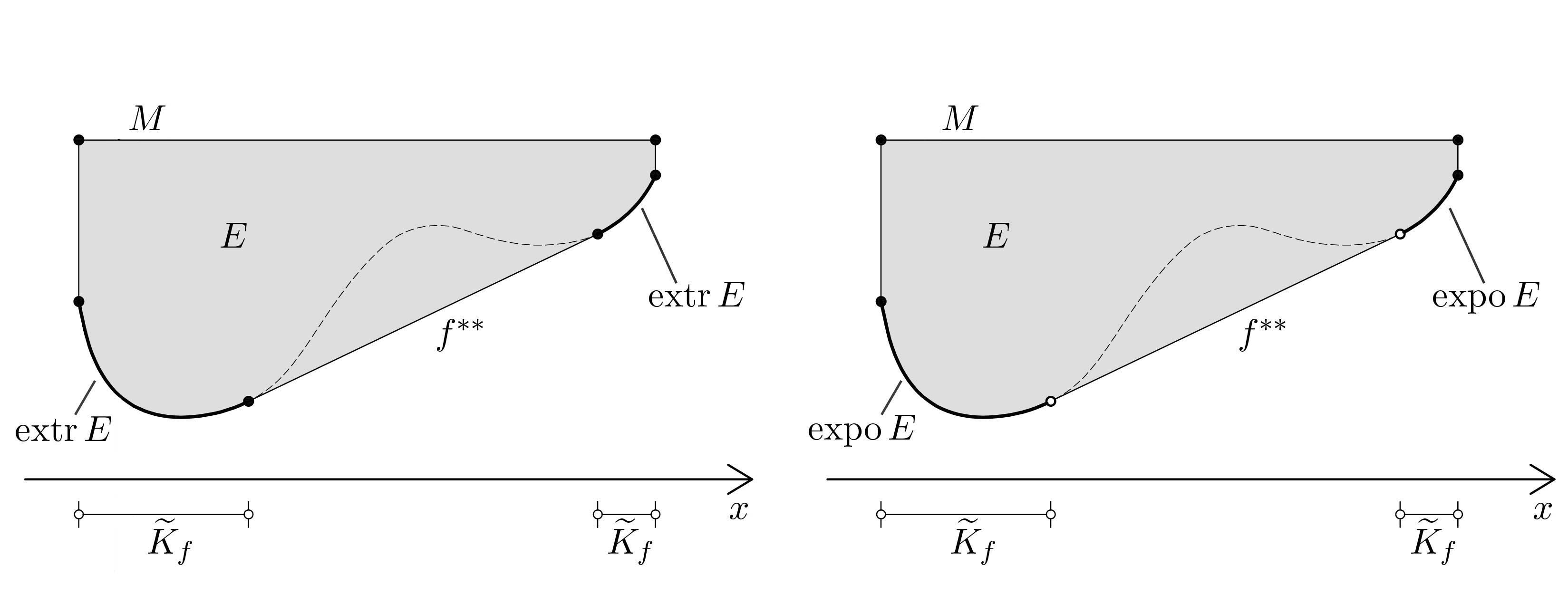}
	\caption{An example of $f^{**}(x)$ in which $\extr E$ and $\expo E$ are in evidence.}
	\label{expoE}
\end{figure}


Let us prove the following preliminary
\begin{lemma}\label{l:stab}
	Let $f\in C^1(\overline{\mathcal{I}})$,
	and let $(f_n)\subset  C^1(\overline{\mathcal{I}})$
	be a sequence converging uniformly to $f$.
	Then it holds:
	\begin{itemize}
		\item[(a)]
		If $x_0 \in \widetilde{K}_f$ and, for every $n\in\mathbb{N}$,
		$[a_n, b_n]\subset \overline{\mathcal{I}}$, $\lambda_n\in [0,1]$
		satisfy
		\[
		x_0 = (1-\lambda_n) a_n + \lambda_n b_n,
		\qquad
		f^{**}_n(x_0) = (1-\lambda_n) f_n(a_n) + \lambda_n f_n(b_n),
		\]
		then $a_n, b_n \to x_0$.
		
		\item[(b)]
		If, in addition, $f$ satisfies \eqref{noflat},
		i.e.\ $\overline{K_f} = \overline{\widetilde{K}_f}$,
		and $(a_0, b_0)$ is one of the maximal intervals $I^i$ where
		$f^{**}$ is affine,
		then for every $n\in\mathbb{N}$ there exists a maximal interval
		$(a_n, b_n)$ where $f_n^{**}$ is affine
		such that $a_n \to a_0$, $b_n \to b_0$.
	\end{itemize}
\end{lemma}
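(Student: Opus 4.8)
The plan is to reduce everything to one soft fact — that the convexification operator $g\mapsto g^{**}$ is $1$-Lipschitz for the sup norm — and then to run a compactness-and-contradiction argument in each of the two parts. For the preliminaries: if $\|g-h\|_\infty\le\varepsilon$, then every affine minorant of $g$, lowered by $\varepsilon$, is an affine minorant of $h$, so $\|g^{**}-h^{**}\|_\infty\le\varepsilon$; hence $f_n^{**}\to f^{**}$ uniformly on $\overline{\mathcal{I}}$ (and $f_n\to f$ uniformly, by assumption). I would also use two trivial remarks: a uniform limit of affine functions on an interval is affine, and any non-degenerate interval on which $f^{**}$ is affine is contained in exactly one of the maximal intervals $K^i$.

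For (a), I would argue by contradiction. We may assume $a_n\le b_n$, so $a_n\le x_0\le b_n$; if $a_n=b_n$ there is nothing to prove, so suppose $a_n<b_n$ along a subsequence and, refining it, $a_n\to\bar a$, $b_n\to\bar b$, $\lambda_n\to\bar\lambda$ with $\bar a\le x_0\le\bar b$. If $(\bar a,\bar b)\neq(x_0,x_0)$, say $\bar a<x_0$ (the other case is symmetric), then $f_n^{**}$ is affine on $[a_n,b_n]$: indeed the chord of $f_n$ over $[a_n,b_n]$ is an affine function dominating $f_n^{**}$ there (because $f_n^{**}$ is convex and $\le f_n$), and by hypothesis it equals $f_n^{**}$ at $x_0$, which for large $n$ is interior to $[a_n,b_n]$; a convex function lying below an affine one and touching it at an interior point must coincide with it. Passing to the uniform limit, $f^{**}$ is affine on every compact subinterval of $(\bar a,\bar b)$, hence on the non-degenerate interval $[\bar a,x_0]$, so $x_0$ belongs to some $K^i$, contradicting $x_0\in\widetilde{K}_f$. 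The degenerate configuration in which $x_0$ is an endpoint of $[a_n,b_n]$ for infinitely many $n$ is discarded the same way, once $[a_n,b_n]$ is taken to be the maximal interval of affinity of $f_n^{**}$ abutting $x_0$: non-degeneracy of that interval would again push $x_0$ into $\bigcup_i K^i$.

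For (b), fix $m_0\in(a_0,b_0)=I^i$. By \eqref{noflat} we have $f(m_0)>f^{**}(m_0)$; if $m_0$ were in $K_{f_n}$ then $f_n(m_0)=f_n^{**}(m_0)$, and letting $n\to\infty$ (uniform convergence of $f_n$ and of $f_n^{**}$) would give $f(m_0)=f^{**}(m_0)$, which is false. So for $n$ large $m_0\notin K_{f_n}$, whence $m_0$ lies in the interior of a non-degenerate maximal interval $[a_n,b_n]$ on which $f_n^{**}$ is affine, with $a_n,b_n\in K_{f_n}\cup\{a,b\}$ (endpoints of maximal affine intervals are contact points or endpoints of $\mathcal{I}$). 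It is enough to show every subsequential limit $(\bar a,\bar b)$, with $\bar a\le m_0\le\bar b$, equals $(a_0,b_0)$. As in (a), uniform convergence makes $f^{**}$ affine on the non-degenerate interval $[\bar a,\bar b]$, which is contained in some maximal affine interval $K^j$; since $K^j$ then contains the interior point $m_0$ of $K^i$ and distinct maximal affine intervals meet in at most an endpoint, $K^j=K^i$, so $[\bar a,\bar b]\subseteq K^i=[a_0,b_0]$, i.e.\ $a_0\le\bar a\le m_0\le\bar b\le b_0$. If $\bar a>a_0$, then $\bar a\in(a_0,b_0)$; for $n$ large $a_n\neq a$, so $a_n\in K_{f_n}$ and $f_n(a_n)=f_n^{**}(a_n)$, whence in the limit $f(\bar a)=f^{**}(\bar a)$, contradicting \eqref{noflat}. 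Hence $\bar a=a_0$ (if instead $a_n=a$ infinitely often, then $\bar a=a\le a_0\le\bar a$ forces $\bar a=a_0$ directly), and symmetrically $\bar b=b_0$.

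The analytic content is thus light; the real work is combinatorial — controlling how the family of maximal affine intervals of $f_n^{**}$ deforms as $f_n\to f$. Three points need care: ensuring the relevant affine interval does not collapse to a point (handled by $x_0\in\widetilde{K}_f$ in (a) and by $f>f^{**}$ at an interior point in (b)); matching the limiting affine interval with the correct $K^i$ rather than a merged or neighbouring piece (the overlap argument above); and the bookkeeping of boundary configurations — an endpoint landing at $a$ or $b$, or a degenerate triple $(a_n,b_n,\lambda_n)$ — which is cleanest once one fixes throughout the convention that $[a_n,b_n]$ denotes the maximal interval of affinity of $f_n^{**}$ through the marked point. I expect the interval-matching step and this boundary bookkeeping to be the fussiest parts of the proof.
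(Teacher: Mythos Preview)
Your argument is correct and shares the paper's compactness-and-contradiction skeleton, but the execution in (a) is genuinely different. The paper never argues that $f_n^{**}$ is affine on $[a_n,b_n]$; instead it uses directly that $x_0\in\widetilde{K}_f$ makes $(x_0,f(x_0))$ an \emph{exposed} point of $\epi f^{**}$, i.e.\ $f(x)>f(x_0)+f'(x_0)(x-x_0)$ for every $x\neq x_0$. Passing to the limit in the hypothesis then gives $f^{**}(x_0)=(1-\lambda)f(\bar a)+\lambda f(\bar b)>f^{**}(x_0)$ in a single line. Your route---the chord of $f_n$ dominates $f_n^{**}$ and touches it at an interior point, hence $f_n^{**}$ is affine on $[a_n,b_n]$, hence $f^{**}$ is affine on the limiting interval, hence $x_0\in K^i$---is longer but has the virtue that the same ``affinity persists in the uniform limit'' mechanism drives part (b) as well. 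For (b) the paper is far terser than you: it only checks that each point of $(a_0,b_0)$ eventually leaves $K_{f_n}$ and then writes ``in view of (a)'', leaving the interval-matching and the endpoint bookkeeping implicit; your treatment of those steps (showing $[\bar a,\bar b]\subseteq K^i$ by overlap of maximal affine intervals, then ruling out $\bar a>a_0$ because $a_n\in K_{f_n}$ forces $f(\bar a)=f^{**}(\bar a)$, contradicting \eqref{noflat}) makes that reduction explicit.
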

\begin{proof}
	(a)
	Since $x_0$ is an exposed point of the epigraph of $f^{**}$,
	it holds
	\[
	f(x) \geq f^{**}(x) > f(x_0) + f'(x_0) (x-x_0)
	\qquad
	\forall x\neq x_0.
	\]
	Assume by contradiction that at least one of the sequences
	$(a_n)$, $(b_n)$ does not converge to $x_0$.
	Then there exists a subsequence $(n_j)$ such that
	$a_{n_j} \to \overline{a}$, $b_{n_j}\to \overline{b}$,
	with $\overline{a} \leq x_0 \leq \overline{b}$ and
	$\overline{a} < \overline{b}$.
	Moreover, we clearly have
	$\lambda_{n_j} \to \lambda := (x_0 - \overline{a}) / (\overline{b} - \overline{a})$.
	Hence,
	\small
	\[
	f^{**}(x_0) = \lim_j f_{n_j}^{**}(x_0)
	= \lim_j (1-\lambda_{n_j}) f_{n_j}(a_{n_j}) + \lambda_{n_j} f_{n_j}(b_{n_j})
	= (1-\lambda) f(\overline{a}) + \lambda f(\overline{b})
	> f^{**}(x_0),
	\]
	\normalsize
	a contradiction.
		
	(b) In view of (a), it is enough to prove that,	if $x_0 \in (a_0, b_0)$, 
	then $x_0\not\in K_{f_n}$ for $n$ large enough. Assume by contradiction that there exists a subsequence $(n_j)$
	such that $x_0\in K_{f_{n_j}}$ for every $j$, i.e.,
	$f_{n_j}(x_0) = f_{n_j}^{**}(x_0)$ for every $j$.
	Since, by assumption, $f > f^{**}$ on $(a_0, b_0)$, one has
	\[
	f(x_0) > f^{**}(x_0) = \lim_j f_{n_j}^{**}(x_0)
	= \lim_j f_{n_j}(x_0) = f(x_0),
	\]
	a contradiction.
\end{proof}

\begin{proof}[Proof of Proposition~\ref{p:prop variaz}]
	Let $M > \max_{[a,b]} f$, so that
	$E := \epi f^{**} \cap \{y \leq M\}$ is a compact convex
	subset of $\mathbb{R}^2$.
	Moreover, we have that
	\[
	\{(x, f(x)):\ x\in \widetilde{K}_f \cup \{a,b\}\}
	= \expo E \cap \{y < M\},
	\]
	i.e., the set at left-hand side coincides with the set of
	exposed points of $\epi f^{**}$. Let
	\[
	f_s := f + s\, \varphi,
	\quad
	f_s^{**} := (f_s)^{**},
	\qquad s\in\mathbb{R}.
	\]
	By the Dominated Convergence Theorem, Proposition~\ref{p:prop variaz}
	will be a consequence of the
	following pointwise convergences:
	\begin{align}
	& \lim_{s\to 0} \frac{f_s^{**}(x_0) - f^{**}(x_0)}{s} =
	\varphi(x_0)
	&{\rm if}\hspace{3mm} x_0 \in \widetilde{K}_f,\label{f:conv1}\\
	& \lim_{s\to 0^\pm} \frac{f_s^{**}(x_0) - f^{**}(x_0)}{s} =
	\pm(\varphi_i^\pm)^{**}(x_0)
	&{\rm if}\hspace{3mm} x_0 \in K^i,\ i\in J_C.\label{f:conv2}
	\end{align}
	
	\textit{Proof of \eqref{f:conv1}.} We have already observed that, if $x_0\in \widetilde{K}_f$,
	then $f^{**}(x_0) = f(x_0)$ and	$(x_0, f(x_0)) \in \expo E$. Since $f\in C^1$, by definition of exposed point we have that
	\begin{equation*}\label{f:expo}
	f(x) \geq f^{**}(x) > f(x_0) + f'(x_0)\, (x-x_0) =: h(x),
	\qquad
	\forall x\in [a,b],\ x\neq x_0.
	\end{equation*}
	For every $s\in\mathbb{R}$ let
	$a_s \in [a, x_0]$,
	$b_s \in [x_0, b]$,
	and $\lambda_s \in [0,1]$ be such that
	\begin{equation}\label{f:conv}
	x_0 = (1-\lambda_s) a_s + \lambda_s \, b_s,
	\qquad
	f_s^{**}(x_0) = (1-\lambda_s) f_s(a_s) + \lambda_s\, f_s(b_s)\,.
	\end{equation}
	
	Let us first prove that
	\begin{equation}\label{f:conv1p}
	\lim_{s\to 0^+} \frac{f_s^{**}(x_0) - f^{**}(x_0)}{s} =
	\varphi(x_0)\,.
	\end{equation}
	Since
	\[
	\frac{f_s^{**}(x_0) - f^{**}(x_0)}{s}
	\leq
	\frac{f_s(x_0) - f(x_0)}{s} = \varphi(x_0),
	\qquad \forall s> 0,
	\]
	it follows that
	\[
	\limsup_{s\to 0^+} \frac{f_s^{**}(x_0) - f^{**}(x_0)}{s}
	\leq \varphi(x_0),
	\]
	hence it remains to prove that
	\[
	l := \liminf_{s\to 0^+} \frac{f_s^{**}(x_0) - f^{**}(x_0)}{s}
	\geq \varphi(x_0).
	\]
	Let $s_n\searrow 0$ be a sequence such that
	\(
	l
	= \lim\limits_{n \to +\infty} \frac{f_{s_n}^{**}(x_0) - f^{**}(x_0)}{s_n}\,.
	\)
	Using \eqref{f:conv} it holds
	\begin{equation}\label{f:maindis}
	\begin{split}
	& \frac{f_{s_n}^{**}(x_0) - f^{**}(x_0)}{{s_n}}
	=
	\frac{(1-\lambda_{s_n}) f_{s_n}(a_{s_n}) + \lambda_{s_n}\, f_{s_n}(b_{s_n}) - f(x_0)}{{s_n}}
	\\ & =
	\frac{(1-\lambda_{s_n}) f(a_{s_n}) + \lambda_{s_n} \, f(b_{s_n}) - f(x_0)}{{s_n}}
	+ (1-\lambda_{s_n}) \varphi(a_{s_n}) + \lambda_{s_n} \, \varphi(b_{s_n})\,.
	\end{split}
	\end{equation}
	Since the sequence $\{f_{s_n}\}$ converges uniformly to $f$,
	by Lemma~\ref{l:stab}(i) it follows that
	$a_{s_n}, b_{s_n} \to x_0$,	hence
	the right-hand side of \eqref{f:maindis} converges to a quantity greater than or equal to
	$\varphi(x_0)$,
	so that $l \geq \varphi(x_0)$ and \eqref{f:conv1p} follows.
	The computation of the limit \eqref{f:conv1p} for $s\to 0^-$ can be done similarly,
	observing that the same inequalities as above hold with	reversed signs. Hence, we conclude that \eqref{f:conv1} holds.
	
	\textit{Proof of \eqref{f:conv2}.} We shall prove \eqref{f:conv2} only for $s\to 0^+$,
	being the proof for $s\to 0^-$ entirely similar.
	Let $i\in J_C$ and let us denote
	\[
	B := K^i \cap (K_f \cup \{a,b\}),
	\qquad
	A := K^i \setminus B.
	\]
	Clearly, the set $B$ is closed and contains both the endpoints of the interval $K^i$.
	
	It is not restrictive to assume that
	$f^{**}(x) = 0$ for every $x\in K^i$, so that
	\begin{equation}\label{f:assump}
	f(x) = 0
	\quad \forall x\in B,
	\qquad
	f(x) > 0
	\quad \forall x\in A.
	\end{equation}
	Let us extend the function $\varphi_i^+$ to $+\infty$ outside $K^i$.
	Since $\varphi \leq \varphi_i^+$ and
	$(f + s\, \varphi_i^+) (x) = s\, \varphi_i^+(x)$
	for every $x\in K^i$, we have that
	\[
	f_s^{**}(x) = (f+s\, \varphi)^{**}(x) \leq
	(f +s\, \varphi_i^+)^{**}(x) = s\, (\varphi_i^+)^{**}(x),
	\qquad \forall x\in K^i,
	\]
	hence
	\[
	\limsup_{s\to 0^+}
	\frac{f_s^{**}(x_0) - f^{**}(x_0)}{s} \leq
	(\varphi_i^+)^{**}(x_0),
	\qquad \forall x_0 \in K^i.
	\]
	
	Let $s_n \searrow 0$ be a sequence such that
	\[
	l := \liminf_{s\to 0^+}
	\frac{f_s^{**}(x_0) - f^{**}(x_0)}{s}
	= \lim_{n\to +\infty}
	\frac{f_{s_n}^{**}(x_0) - f^{**}(x_0)}{s_n}\,,
	\]
	and let $E_n := \epi f_{s_n}^{**} \cap \{ y\leq M\}$, $n\in\mathbb{N}$.
	By \eqref{f:assump},
	for every $\varepsilon > 0$,
	there exists $N_\varepsilon\in\mathbb{N}$ such that,
	for $n \geq N_\varepsilon$, the extreme points of $E_n$
	belong to $B + B_\varepsilon$, so that
	\begin{equation}\label{f:incl}
	K_{f_{s_n}} \cap K^i \subset B + B_\varepsilon
	\qquad \forall n \geq N_\varepsilon.
	\end{equation}
	
	Let
	\[
	\varphi_\epsilon(x) :=
	\begin{cases}
	\varphi(x) & x\in B + B_\varepsilon,\\
	+ \infty & \text{otherwise},
	\end{cases}
	\]
	so that $\varphi_\epsilon(x) = \varphi_i^+(x)$ for all $x\in B$ and $\varphi_\epsilon \to \varphi_i^+$ pointwise in $K_i$.
	From \eqref{f:incl} we know that $f_{s_n}^{**}(x) = (f + s_n \, \varphi_\epsilon)^{**}(x)$ for all $x\in K^i$ and all $n \geq N_\varepsilon$ so that
	\[
	l = \lim_{n\to +\infty}
	\frac{f_{s_n}^{**}(x_0) - f^{**}(x_0)}{s_n}
	\geq
	\liminf_{n\to +\infty}
	\frac{(f + s_n \, \varphi_\epsilon)^{**}(x_0)-f^{**}(x_0)}{s_n}
	\geq \varphi_\varepsilon^{**}(x_0).
	\]
	Finally, letting $\varepsilon \to 0$, we conclude that
	$l\geq (\varphi_i^+)^{**}(x_0)$, concluding the proof.
\end{proof}\par
\begin{proof}[Proof of Theorem~\ref{prop variaz gen}]
	Since $f$ satisfies assumption \eqref{noflat},
	we can use the same arguments of Step~1 in Proposition~\ref{p:prop variaz}.
	We omit the details.
\end{proof}\par
\begin{proof}[Proof of Proposition~\ref{l:base}]
	If $f\in C^0([a,b])$, then $Tf \in C^0([a,b])$, $F\in C^1([a,b])$, $\Fss\in C^1([a,b])$, and
	$F(a) = \Fss(a) = 0$, $F(b) = \Fss(b)$. Hence,
	\begin{equation}\label{f:p1}
	\int_a^b Tf(y) \, dy = \int_a^b f(y)\, dy
	\qquad
	\forall f\in C^0([a,b]).
	\end{equation}
	In the following, we shall denote by $K_F$ the contact set of $F$,
	defined by
	$
	K_F := \{x\in (a,b):\ F(x) = \Fss(x)\}.
	$
	We remark that
	$f = Tf$ on $K_F$.
	Moreover, we have the following characterization of $K_F$:
	\begin{equation}\label{f:Kf}
	\begin{split}
	K_F & = \left\{ x\in (a,b):\ F(y) - F(x) - (y-x) F'(x) \geq 0,
	\ \forall y\in [a,b]\right\}
	\\ & =
	\left\{ x\in (a,b):\ \int_x^y [f(s) - f(x)]\, ds \geq 0,
	\ \forall y\in [a,b]\right\}\,.
	\end{split}
	\end{equation}
	
	Let $f, g\in C^0([a,b])$ and let $F(x) := \int_a^x f(y)\, dy$,
	$G(x) := \int_a^x g(y)\, dy$, $x\in [a,b]$.	We claim that
	\begin{equation}\label{f:p2}
	f,g\in C^0([a,b]),
	\ f \leq g
	\qquad\Longrightarrow\qquad
	Tf \leq Tg.
	\end{equation}
	
	Before proving \eqref{f:p2}, we observe that, from \eqref{f:p1}, \eqref{f:p2} and \cite[Proposition 1]{CranTar},
	we can conclude that \eqref{f:th} holds. So, it remains to prove \eqref{f:p2}. It will be convenient to perform a couple of reductions.
	
	\textit{Reduction 1:}
	it is not restrictive to prove \eqref{f:p2} under the additional assumption
	\begin{equation}
	\label{f:red1}
	f(a) = Tf(a),
	\quad
	f(b) = Tf(b),
	\qquad
	g(a) = Tg(a),
	\quad
	g(b) = Tg(b).
	\end{equation}
	Specifically, since
	\[
	Tf(a) = \inf_{x\in (a,b]} \frac{F(x) - F(a)}{x-a}
	= \inf_{x\in (a,b]} \frac{1}{x-a}\int_a^x f(s)\, ds,
	\]
	it is clear that $Tf(a) \leq f(a)$.
	If $Tf(a) < f(a)$, then there exists $\beta\in (a,b]$
	such that $(a, \beta)$
	is a connected component of $(a,b)\setminus K_F$.
	Given $\varepsilon > 0$,
	let $c_\varepsilon :=  (1 + 1/\sqrt{2}) \varepsilon$,
	and define the function
	\[
	\varphi_\varepsilon(t) :=
	\begin{cases}
	-1 + t/\varepsilon & \text{if}\
	t \in [0, c_\varepsilon],\\
	-1 + (2c_\varepsilon-t)/\varepsilon & \text{if}\
	t \in [c_\varepsilon, 2c_\varepsilon- \varepsilon],\\
	0 &\text{otherwise},
	\end{cases}
	\]
	so that $\varphi_\varepsilon(0) = -1$
	and $\int_0^{2c_\varepsilon - \varepsilon} \varphi_\varepsilon = 0$.	
	It is not difficult to show that,
	for $\varepsilon > 0$ small enough,
	the function
	$f_\varepsilon(x) := f(x) + [f(a) - Tf(a)]\, \varphi_\varepsilon(x-a)$
	satisfies $Tf_\varepsilon = Tf$ and $Tf_\varepsilon(a) = f_\varepsilon(a)$.
	Moreover, we have $\|f_\varepsilon - f\|_1 \to 0$ as
	$\varepsilon \to 0$; similarly, we can modify $f$ near $b$ and the same
	for $g$.

	\textit{Reduction 2:}
	it is not restrictive to prove \eqref{f:p2} under the additional assumption
	\begin{equation}
	\label{f:red2}
	f < g \qquad \text{in}\ [a,b].
	\end{equation}
	Specifically, it is enough to check that
	$T(g + \varepsilon) = Tg + \varepsilon$.

We are thus reduced to prove \eqref{f:p2} when $f,g \in C([a,b])$ satisfy also \eqref{f:red1} and \eqref{f:red2}.
	Let
	\[
	x_0 := \max\{x\in [a,b]:\
	Tf(y) \leq Tg(y)\
	\forall y\in [a, x]\}.
	\]
	Since $F\leq G$, we have that $\Fss \leq \Gss$ and hence $Tf(a) \leq Tg(a)$.
	Moreover, by \eqref{f:red1} and \eqref{f:red2}, we clearly have $x_0 > a$.

	Assume by contradiction that $x_0 < b$,
	so that $Tf(x_0) = Tg(x_0)$, and let us consider the following cases.
	
	\textit{Case 1:} $x_0 \in K_F\cap K_G$.
	Hence,
	\[
	Tf(x_0) = f(x_0) < g(x_0) = Tg(x_0),
	\]
	in contradiction with $Tf(x_0) = Tg(x_0)$.
	
	\textit{Case 2:} $x_0 \in K_F$, $x_0\not\in K_G$.
	Let $(\alpha, \beta)$ be the maximal connected component of $(a,b)\setminus K_G$
	containing $x_0$, so that $Tg$ is constant on $[\alpha, \beta]$
	and $g(\alpha) = Tg(\alpha)$, $g(\beta) = Tg(\beta)$.
	Here it is worth to remark that these equalities hold also in the
	case $\alpha = a$ or $\beta = b$ thanks to \eqref{f:red1}. By the characterization \eqref{f:Kf} we have that:
	
	\small
	\begin{gather*}
	x_0 \in K_F
		\hspace{1mm}\Longrightarrow	\hspace{1mm}
	\int_{x_0}^{\beta} [f(s) - f(x_0)]\, ds \geq 0,\qquad
	\beta\in K_G\ \text{or}\ \beta = b
	\hspace{1mm}\Longrightarrow\hspace{1mm}
	\int_{x_0}^{\beta} [g(\beta) - g(s)]\, ds \geq 0,
	\end{gather*}
	\normalsize
	so that
	\begin{equation}\label{f:dis1}
	\int_{x_0}^{\beta}[g(\beta) - g(s) + f(s) - f(x_0)]\, ds \geq 0.
	\end{equation}
	On the other hand
	$g(\beta) = Tg(\beta) = Tg(x_0)$
	and $f(x_0) = Tf(x_0)$.
	Since $Tf(x_0) = Tg(x_0)$, we conclude that $g(\beta) = f(x_0)$,
	hence from \eqref{f:dis1} it holds
	\[
	\int_{x_0}^{\beta}[f(s) - g(s) ]\, ds \geq 0,
	\]
	contradicting the assumption $f < g$.
	
	\textit{Case 3:} $x_0 \not\in K_F$, $x_0\in K_G$.
	We can reason as in the previous case,
	considering the connected component $(\alpha, \beta)$ of $(a,b)\setminus K_f$
	containing $x_0$, and obtaining the inequality
	\begin{equation*}\label{f:dis2}
	\int_{\alpha}^{x_0}[g(x_0) - g(s) + f(s) - f(\alpha)]\, ds \geq 0.
	\end{equation*}
	Since, here, $g(x_0) = f(\alpha)$, we get again
	a contradiction with the assumption $f<g$.
	
	\textit{Case 4:} $x_0 \not\in K_F$, $x_0\not\in K_G$.
	In this case $Tf$ and $Tg$ are locally constant in a neighborhood of $x_0$,
	again contradicting the definition of $x_0$. This proves \eqref{f:th}.\par

	We divide the proof of \eqref{f:th2} into two steps.\par\noindent
	\textit{STEP 1.}
	If $f\in L^1(a,b)$ and the sequence $\{f_n\} \subset C^0([a,b])$ converges to $f$ in $L^1$, then
	\[
	Tf_n \to Tf\quad\text{a.e.},
	\quad \text{and} \quad
	\|Tf_n - Tf\|_1 \to 0.
	\]
	Let $F(x) := \int_a^x f$, $F_n(x) := \int_a^x f_n$.
	We have that $F_n \to F$ uniformly in $[a,b]$,
	hence also $\Fss_n \to \Fss$ uniformly in $[a,b]$.
	(Proof: use the characterization
	$\Fss(x) = \min\{(1-\lambda) F(x_0) + \lambda F(x_1):\
	\lambda\in [0,1],\ (1-\lambda)x_0 + \lambda x_1 = x\}$.)
	By Theorem 24.5 in \cite{Rock} we deduce that
	$(\Fss_n)' \to (\Fss)'$ at every point of differentiability
	of $\Fss$, i.e.\ almost everywhere in $[a,b]$.
	By definition of $T$, it follows that
	$Tf_n \to Tf$ a.e. in $[a,b]$.
	
	The $L^1$ convergence of $\{Tf_n\}$ to $Tf$ follows from \eqref{f:th}.
	Namely, from \eqref{f:th} we have that $\|Tf_n - Tf_m\|_1 \leq \|f_n - f_m\|_1$,
	hence $\{Tf_n\}$ is a Cauchy sequence in $L^1$ (and so it converges to its pointwise limit), thereby proving the claim of Step 1.\par\noindent	
	\textit{STEP 2.} Given $f,g\in L^1(a,b)$, let $\{f_n\}, \{g_n\} \subset C^0([a,b])$ sequences
	converging in $L^1$ respectively to $f$ and $g$.
	By \eqref{f:th}, we have that $\|Tf_n - Tg_n\|_1 \leq \|f_n - g_n\|_1$ for every $n$.
	Hence, by Step~1, passing to the limit as $n\to +\infty$
	we obtain \eqref{f:th2}.
\end{proof}

\begin{proof}[Proof of Proposition~\ref{lipschitz Iphi}]
	By 
	(\ref{zbar}) we have that
	\footnotesize
	\begin{equation*}
	\label{zbar3}
	Tf=
	\begin{cases}
	\dfrac{F(b^i)-F(a^i)}{b^i-a^i} & x\in I^i,\ i \in J,\\
	F'(x) & x \in {K}_F.
	\end{cases}\qquad
	(\mathcal{J}^\varphi_F)'=
	\begin{cases}
	\dfrac{\varphi(b^i)-\varphi(a^i)}{b^i-a^i}=\dfrac{\int_{a^i}^{b^i}\varphi'dx}{b^i-a^i} & x\in I^i,
	\ i \in J,\\
	\varphi'(x) & x \in {K}_F,
	\end{cases}
	\end{equation*}
	\normalsize
	and similarly for $Tg$ and $(\mathcal{J}^\varphi_G)'$. Then we have the fundamental integral equivalence
	\begin{equation*}
	\int_a^b [Tf\hspace{1mm} (\mathcal{J}^\varphi_F)'-Tg\hspace{1mm} (\mathcal{J}^\varphi_G)']dx=\int_a^b [Tf-Tg]\hspace{1mm} \varphi'dx,\hspace{5mm}\forall\varphi\in C^{\infty}_c(\mathcal{I})
	\label{crucial}
	\end{equation*}
	so that the thesis is achieved by applying the H\"{o}lder inequality and \eqref{f:th2}.
\end{proof}
\begin{proof}[Proof of Proposition~\ref{rmk}]
	i) Let us observe that
	\small
	\begin{equation*}
	\int_a^b [\mathcal{H}(Tf)\hspace{1mm} (\mathcal{J}^\varphi_F)'-\mathcal{H}(Tg)\hspace{1mm} (\mathcal{J}^\varphi_G)']dx=\int_a^b [\mathcal{H}(Tf)-\mathcal{H}(Tg)]\hspace{1mm} \varphi'dx,\hspace{5mm}\forall\varphi\in C^{\infty}_c(\mathcal{I})
	\label{crucial2}
	\end{equation*}
	\normalsize
	then we proceed as in Proposition \ref{lipschitz Iphi}, considering the Lipschitz property of $\mathcal{H}$.\par
	ii)	We consider the equivalence

\footnotesize
	\begin{equation}
	\begin{split}
	&\int_a^b [\mathcal{H}(Tf)\hspace{1mm} (\mathcal{G}^{F,\psi})'-\mathcal{H}(Tg)\hspace{1mm} (\mathcal{G}^{G,\psi})']dx=\int_a^b [\mathcal{H}(Tf)(\psi\cos F)'-\mathcal{H}(Tg)(\psi\cos G)']dx\\&=\int_a^b [\mathcal{H}(Tf)-\mathcal{H}(Tg)](\psi\cos F)'dx+\int_a^b\mathcal{H}(Tg)\hspace{1mm}(\psi\cos F-\psi\cos G)'dx \hspace{5mm}\forall\psi\in C^{\infty}_c(\mathcal{I}).
	\end{split}
	\label{crucial3}
	\end{equation}
	\normalsize
	Since $(\psi\cos F-\psi\cos G)'=\psi'(\cos F-\cos G)-\psi[(f-g)\sin F+g(\sin F-\sin G)]$, the thesis follows by the Lipschitzianity of $\sin,\cos$
and by arguing as in the above proofs.
\end{proof}

\begin{proof}[Proof of Proposition~\ref{Iphi conv}]
	The functions $\mathcal{J}^\varphi_n$ are Lipschitz continuous, with
	\[
	\|\mathcal{J}^\varphi_n\|_\infty \leq \|\varphi\|_\infty,
	\qquad
	\|(\mathcal{J}^\varphi_n)'\|_\infty \leq \|\varphi'\|_\infty.
	\]
	Hence, by the Dominated Convergence Theorem,
	it is enough to show that $\mathcal{J}^\varphi_n \to \mathcal{J}^\varphi$ and $(\mathcal{J}^\varphi_n)' \to (\mathcal{J}^\varphi)'$
	a.e.\ in $\mathcal{I}$.	
	The pointwise convergence of $\{\mathcal{J}^\varphi_n\}$ to $\mathcal{J}^\varphi$ is a
	direct consequence of Lemma~\ref{l:stab}. To prove the a.e.\ convergence of $\{(\mathcal{J}^\varphi_n)'\}$ to $(\mathcal{J}^\varphi)'$,
	it will be convenient to distinguish between the two cases (a) and (b)
	in Lemma~\ref{l:stab}.
	
	Let $x_0$ be as in case (a), and assume that all the function $\mathcal{J}^\varphi_n$
	are differentiable at $x_0$ (this condition is satisfied at
	almost every point).
	For every $n$,
	if $x_0 \in K_{f_n}$ (i.e.\ $a_n = b_n$) then $(\mathcal{J}^\varphi_n)'(x_0) = \varphi'(x_0)$,
	otherwise there exists $x_n\in (a_n, b_n)$
	such that
	\[
	(\mathcal{J}^\varphi_n)'(x_0) = \frac{\varphi(b_n) - \varphi(a_n)}{b_n - a_n}
	= \varphi'(x_n).
	\]
	Since $a_n, b_n\to x_0$, we finally get $(\mathcal{J}^\varphi_n)'(x_0) \to \varphi'(x_0) = (\mathcal{J}^\varphi)'(x_0)$.
	
	Let $x_0$ be as in case (b).
	Then, for $n$ large enough,
	\[
	(\mathcal{J}^\varphi_n)'(x_0) = \frac{\varphi(b_n) - \varphi(a_n)}{b_n - a_n}
	\to
	\frac{\varphi(b_0) - \varphi(a_0)}{b_0 - a_0}
	= (\mathcal{J}^{\varphi})'(x_0),
	\]
	and the proof is complete.
\end{proof}
\section{Proofs of existence and uniqueness results}
\label{Proof of the Theorem 2}
To simplify the notations we define the functionals
\small
\begin{equation}
\chi(u):=\dfrac{u'}{\sqrt{1+(u')^2}},\hspace{10mm}\gamma(u):=\sqrt{1+(u')^2}
\label{chi, gamma}
\end{equation}
\normalsize
and we state a preliminary
\begin{lemma}
	 $\chi, \gamma: C^1[0,L]\rightarrow C^0[0,L]$ are locally Lipschitz continuous.
	\label{lipschitz nonlin}
\end{lemma}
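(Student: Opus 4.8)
The plan is to observe that both $\chi$ and $\gamma$ factor as the composition of the bounded linear operator $D\colon C^1[0,L]\to C^0[0,L]$, $Du:=u'$, with the superposition (Nemytskii) operators associated to the scalar maps $g_\chi(p):=p/\sqrt{1+p^2}$ and $g_\gamma(p):=\sqrt{1+p^2}$, $p\in\mathbb{R}$. Thus it suffices to control the post-composition with these scalar nonlinearities in the sup-norm.

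First I would check that $g_\chi$ and $g_\gamma$ are globally Lipschitz on $\mathbb{R}$ with constant $1$. Indeed, $g_\chi$ is differentiable with $g_\chi'(p)=(1+p^2)^{-3/2}\in(0,1]$, and $g_\gamma$ is differentiable with $g_\gamma'(p)=p\,(1+p^2)^{-1/2}\in(-1,1)$; hence, by the mean value theorem, $|g_\chi(p)-g_\chi(q)|\le|p-q|$ and $|g_\gamma(p)-g_\gamma(q)|\le|p-q|$ for all $p,q\in\mathbb{R}$. For $g_\gamma$ this is precisely the elementary inequality already used in the proof of Proposition~\ref{lipschitz functional}.

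Then, for any $u_1,u_2\in C^1[0,L]$,
\[
\|\chi(u_1)-\chi(u_2)\|_\infty
=\sup_{x\in[0,L]}\bigl|g_\chi(u_1'(x))-g_\chi(u_2'(x))\bigr|
\le\sup_{x\in[0,L]}|u_1'(x)-u_2'(x)|\le\|u_1-u_2\|_{C^1},
\]
and the identical chain of inequalities with $g_\gamma$ in place of $g_\chi$ yields $\|\gamma(u_1)-\gamma(u_2)\|_\infty\le\|u_1-u_2\|_{C^1}$. Hence $\chi$ and $\gamma$ are in fact globally Lipschitz with constant $1$, and in particular locally Lipschitz continuous, which proves the lemma.

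There is essentially no obstacle here: the only point requiring a (trivial) computation is the uniform bound $|g_\chi'|,|g_\gamma'|\le1$ on $\mathbb{R}$; everything else is the standard fact that post-composition with a globally Lipschitz scalar map is Lipschitz on $C^0[0,L]$ for the sup-norm, combined with the boundedness of the derivative operator $D$.
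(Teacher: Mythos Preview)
Your proof is correct and follows essentially the same approach as the paper's: both apply the mean value theorem to the scalar maps $p\mapsto p/\sqrt{1+p^2}$ and $p\mapsto\sqrt{1+p^2}$, bound their derivatives by $1$, and conclude the pointwise inequality $|\chi(v)-\chi(w)|\le|v'-w'|$ (and likewise for $\gamma$). Your presentation is slightly more structured in factoring through the Nemytskii operator and the derivative map $D$, and you explicitly note that the estimate is global rather than merely local, but the underlying argument is identical.
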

\begin{proof}[Proof]
	Given $v,w \in C^1[0,L]$, we apply the Lagrange Theorem, so that there exists $\varrho:= \varrho(x)\in\big(v',w'\big)$ such that

	\small
	\begin{equation*}
	\begin{split}
	&\big|\chi\big(v\big)-\chi\big(w)\big)\big|=\frac{\big|v'-w'\big|}{\sqrt{(1+\varrho^2)^3}}\leq \big|v'-w'\big|,\hspace{5mm}\big|\gamma\big(v\big)-\gamma\big(w)\big)\big|=\frac{|\varrho|\big|v'-w'\big|}{\sqrt{1+\varrho^2}}\leq \big|v'-w'\big|.
	\end{split}
	\label{lagr}
	\end{equation*}
	\normalsize
\end{proof}

\begin{proof}[Proof of Theorem~\ref{teo1}]
	Let $n\geq 1$ an integer.
	Testing $n$ times equations (\ref{eq weak sist gal}) for $r=1,\dots,n$ and $t\geq 0$ we obtain a system of ODE's for $k=1,\dots,n$

	\footnotesize
	\begin{equation}
	\begin{cases}
	M\ddot{w}^k_n(t)+EI\dfrac{k^4\pi^4}{L^4} w^k_n(t)+Mg\dfrac{\sqrt{2L}((-1)^k-1)}{k\pi}=\\\hspace{15mm}\big(h_\alpha(w_n,\theta_n),[\mathcal{J}^{e_k}_\alpha]'\big)_2+\big(h_\beta(w_n,\theta_n),[\mathcal{J}^{e_k}_\beta]'\big)_2\vspace{3mm}\\
	 \dfrac{M\ell}{3}\ddot{\theta}^k_n(t)+\bigg(EJ\dfrac{k^4\pi^4}{L^4\ell}+GK\dfrac{k^2\pi^2}{L^2\ell}\bigg)\theta^k_n(t)=\\\hspace{15mm}\big(h_\alpha(w_n,\theta_n),[\mathcal{G}^{\theta_n,e_k}_\alpha]_x\big)_2-\big(h_\beta(w_n,\theta_n),[\mathcal{G}^{\theta_n,e_k}_\beta]_x\big)_2
	\end{cases}
	\label{eq weak sist lin gal  2n}
	\end{equation}
	\normalsize
	with the initial conditions as in \eqref{eq weak sist gal}. The local existence of a solution $(w^k_n,\theta^k_n)$ for all $k=1,\dots,n$ and $t\geq 0$ depends on the regularity of the right hand side terms of (\ref{eq weak sist lin gal  2n}). We introduce the vectors $W=[w^1_n,\dots,w^n_n]$, $\Theta=[\theta^1_n,\dots,\theta^n_n]$ and $e(x)=[e_1(x),\dots,e_n(x)]$ in $\mathbb{R}^n$; we study the nonlinearities related to one cable.\par
If the functions

	\footnotesize
	\begin{equation*}
	\begin{split}
	&F_k(W,\Theta):=\big[H\overline{\xi}+\frac{AE_c}{L_{c}}\Gamma\big(W\cdot e+\ell\sin(\Theta\cdot e)\big]\int_{0}^{L}\chi\bigg([W\cdot e+\ell\sin(\Theta\cdot e)+y]^{**}\bigg)\hspace{1mm}(\mathcal{J}^{e_k}_\alpha)'dx,\\&G_k(W,\Theta):=\big[H\overline{\xi}+\frac{AE_c}{L_{c}}\Gamma\big(W\cdot e+\ell\sin(\Theta\cdot e)\big]\int_{0}^{L}\chi\bigg([W\cdot e+\ell\sin(\Theta\cdot e)+y]^{**}\bigg)\hspace{1mm}(\mathcal{G}^{\Theta\cdot e,e_k}_\alpha)'dx,
	\end{split}
	\end{equation*}
	\normalsize
	are locally Lipschitz continuous with respect to $W$, $\Theta$ for all $k=1,\dots,n$, we have the existence and uniqueness of a solution of \eqref{eq weak sist lin gal  2n} on some interval  $[0,t_n)$ with $t_n\in(0,T]$. \par
	Thanks to Lemma \ref{lipschitz nonlin}, Proposition \ref{rmk}-i) and (\ref{gamma}), for every compact subset $X\subset \mathbb{R}^n$ there exists $C_0>0$ such that, for every $W_1, W_2, \Theta_1, \Theta_2\in X$ we have

\footnotesize
	\begin{equation*}
	\begin{split}
	&\big|F_k(W_1,\Theta_1)-F_k(W_2,\Theta_2)\big|=\bigg|\big[H\overline{\xi}+\frac{AE_c}{L_{c}}\Gamma\big(W_1\cdot e+\ell\sin(\Theta_1\cdot e)\big)\big]\cdot\\&\cdot\int_{0}^{L}\bigg[\chi\bigg([W_1\cdot e+\ell\sin(\Theta_1\cdot e)+y]^{**}\bigg)[(\mathcal{J}^{e_k}_{\alpha })_1]'-\chi\bigg([W_2\cdot e+\ell\sin(\Theta_2\cdot e)+y]^{**}\bigg)[(\mathcal{J}^{e_k}_\alpha)_2]'\bigg]dx\\&+\frac{AE_c}{L_{c}}\bigg\{\int_{0}^{L}\bigg[\gamma\bigg([W_1\cdot e+\ell\sin(\Theta_1\cdot e)+y]^{**}\bigg)-\gamma\bigg([W_2\cdot e+\ell\sin(\Theta_2\cdot e)+y]^{**}\bigg)\bigg]dx\bigg\}\cdot\\&\bigg\{\int_{0}^{L}\chi\bigg([W_2\cdot e+\ell\sin(\Theta_2\cdot e)+y]^{**}\bigg)[(\mathcal{J}^{e_k}_\alpha)_2]'dx\bigg\}\bigg|\leq C_0\|e'_k\|_{\infty}\big(|W_1-W_2|+|\Theta_1-\Theta_2|\big)\|e\|_{W^{1,1}},
	\end{split}
	\label{F continua}
	\end{equation*}
	\normalsize
	so that $F_k(W,\Theta)$ is locally Lipschitz continuous for all $k=1,\dots,n$.
	With some additional computations due to the presence of the trigonometric functions, see Proposition \ref{rmk}-ii), the same arguments can be applied to obtain the locally Lipschitz continuity of $G_k(W,\Theta)$.\par
	We now seek a uniform bound for the sequence $(w_n,\theta_n)$. We omit for the moment the spatial dependence of the approximated solutions. We test the first equation in (\ref{eq weak sist gal}) by $\dot{w}_n$, the second by $\dot{\theta}_n$ and we sum the two equations. Hence, we obtain
	
\footnotesize
	\begin{equation}
	\begin{split}
	 &\frac{d}{dt}\bigg[\dfrac{M}{2}\|\dot{w}_n\|^2_2+\dfrac{EI}{2}\|w_n\|^2_{H^2}+\dfrac{M\ell^2}{6}\|\dot{\theta}_n\|^2_2+\dfrac{EJ}{2}\|\theta_n\|^2_{H^2}+\dfrac{GK}{2}\|\theta_n\|^2_{H^1}-Mg\int_{0}^{L}w_ndx\bigg]=\\&\int_{0}^Lh_\alpha(w_n,\theta_n)[\mathcal{J}^{\dot{w}_n}_\alpha+\ell\mathcal{G}^{\theta_n,\dot{\theta}_n}_\alpha]_{x}dx+\int_{0}^Lh_\beta(w_n,\theta_n)[\mathcal{J}^{\dot{w}_n}_\beta-\ell\mathcal{G}^{\theta_n,\dot{\theta}_n}_\beta]_{x}dx.
	\end{split}
	\label{step 2}
	\end{equation}
	\normalsize
	Let us define the energy of the system for the approximate solution $(w_n, \theta_n)$ as

	\footnotesize
	\begin{equation*}
	\begin{split}
	 \mathcal{E}_n(t):=&\dfrac{M}{2}\|\dot{w}_n\|^2_2+\dfrac{EI}{2}\|w_n\|^2_{H^2}+\dfrac{M\ell^2}{6}\|\dot{\theta}_n\|^2_2+\dfrac{EJ}{2}\|\theta_n\|^2_{H^2}+\dfrac{GK}{2}\|\theta_n\|^2_{H^1}-Mg\int_{0}^{L}w_ndx\\&+H\overline{\xi}\int_{0}^{L}(\sqrt{1+\{[(w_n+\ell\sin\theta_n+y)^{**}]_x\}^2}+\sqrt{1+\{[(w_n-\ell\sin\theta_n+y)^{**}]_x\}^2})dx\\&+\dfrac{AE_c}{2L_c}\big([\Gamma(w_n+\ell\sin\theta_n)]^2+[\Gamma(w_n-\ell\sin\theta_n)]^2\big).
	\end{split}
	\end{equation*}
	\normalsize
	Since we are in the finite dimensional setting it holds the assumption \eqref{noflat}, so that we apply Corollary \ref{prop variaz} and Theorem \ref{prop variaz gen}, finding the energy conservation.
	This is the point where we take advantage of the final dimensional nature of the problem.
	Hence from \eqref{step 2} we have $\dot{\mathcal{E}}_n(t)=0$, that is

	\footnotesize
	\begin{equation}
	\begin{split}
	 &\mathcal{E}_n(t)=\mathcal{E}_n(0)=\frac{M}{2}\|w^1_n\|^2_2+\frac{EI}{2}\|w^0_n\|^2_{H^2}+\frac{M\ell^2}{6}\|\theta^1_n\|^2_2+\dfrac{EJ}{2}\|\theta^0_n\|^2_{H^2}+\frac{GK}{2}\|\theta^0_n\|^2_{H^1}\\&+H\overline{\xi}\int_{0}^{L}(\sqrt{1+\{[(w^0_n+\ell\sin\theta^0_n+y)^{**}]'\}^2}+\sqrt{1+\{[(w^0_n-\ell\sin\theta^0_n+y)^{**}]'\}^2})dx\\&+\dfrac{AE_c}{2L_c}\bigg(\int_{0}^{L}[\sqrt{1+\{[(w^0_n+\ell\sin\theta^0_n+y)^{**}]'\}^2}] dx-L_c\bigg)^2\\&+\dfrac{AE_c}{2L_c}\bigg(\int_{0}^{L}[\sqrt{1+\{[(w^0_n-\ell\sin\theta^0_n+y)^{**}]'\}^2}] dx-L_c\bigg)^2-Mg\int_{0}^{L}w^0_ndx.
	\label{step 2.2}
	\end{split}
	\end{equation}
	\normalsize
	We recall the Poincar\'e inequality $\|w\|_2\leq \varLambda \|w\|_{H^2}$ for every $w\in H^2\cap H^1_0$ ($\varLambda>0$) and we observe that in $\mathcal{E}_n(t)$ only the gravitational term has undefined sign. In order to estimate this term we notice that for all $\varepsilon\in(0,\frac{1}{4}]$ we have
		\footnotesize
	$$-\int_{0}^{L}w_ndx\geq -\int_{0}^{L}(1+\varepsilon w_n^2)dx=-(L+\varepsilon\|w_n\|_2^2)\geq-(L+\varepsilon\varLambda^2\|w_n\|_{H^2}^2).$$
		\normalsize
	Choosing a sufficiently small $\varepsilon\in(0,\frac{1}{4}]$, we find $\eta>0$ such that
	\footnotesize
	\begin{equation*}
	\begin{split}
	 &\mathcal{E}_n(t)\geq\frac{M}{2}\|\dot{w}_n\|_2^2+\bigg(\frac{EI}{2}-Mg\varLambda\varepsilon\bigg)\|w_n\|_{H^2}^2+\frac{M\ell^2}{6}\|\dot{\theta}_n\|^2_2+\frac{EJ}{2}\|\theta_n\|_{H^2}^2+\frac{GK}{2}\|\theta_n\|_{H^1}^2\\&\hspace{10mm}+H\overline{\xi}\int_{0}^{L}(\sqrt{1+\{[(w_n+\ell\sin\theta_n+y)^{**}]_x\}^2}+\sqrt{1+\{[(w_n-\ell\sin\theta_n+y)^{**}]_x\}^2})dx\\&\hspace{10mm}+\frac{AE}{2L_{c}}\big([\Gamma(w_n+\ell\sin\theta_n)]^2+[\Gamma(w_n-\ell\sin\theta_n)]^2\big)-MgL\\&\hspace{10mm}\geq\eta(\|\dot{w}_n\|^2_2+\|w_n\|^2_{H^2}+\|\dot{\theta}_n\|^2_2+\|\theta_n\|^2_{H^2}+\|\theta_n\|^2_{H^1})-MgL.
	\end{split}
	\label{step 2.6}
	\end{equation*}
	\normalsize
	Then from (\ref{step 2.2}) we infer for all $t\in [0,t_n)$ and $n\geq 1$
	\footnotesize
	\begin{equation}
	\begin{split}
	\eta(\|\dot{w}_n\|^2_2+\|w_n\|^2_{H^2}+\|\dot{\theta}_n\|^2_2+\|\theta_n\|^2_{H^2}+\|\theta_n\|^2_{H^1})\leq \mathcal{E}_n(0)+MgL.
	\label{step2.8}
	\end{split}
	\end{equation}
	\normalsize
	Thanks to \eqref{step2.8} we obtain $t_n=T$, ensuring the global existence and uniqueness of the solution $(w_n,\theta_n)$ on $[0,T]$. Moreover, since the total energy of (\ref{eq weak sist gal}) is conserved in time, the solution cannot blow up in finite time and the global existence is obtained for an arbitrary $T>0$, including $T=\infty$.
\end{proof}
\begin{proof}[Proof of Theorem~\ref{corol inclusione}]
	To simplify the notation we denote by $L^p(V)$ the space $L^p((0,T);V(0,L))$ for $1\leq p\leq\infty$, by $Q=(0,T)\times(0,L)$ and by $C>0$ all the positive constants.
	We observe that $\sup_n|\mathcal{E}_n(0)|+MgL<\infty$ is independent of $n$ and  $t$, since $w^0_n$ and $\theta^0_n$ belong to $C^1[0,L]$. Then from \eqref{step2.8} we infer the boundedness of $\{w_n\}, \{\theta_n\}$ in $L^{\infty}(H^2)$ and of $\{\dot{w}_n\}, \{\dot{\theta}_n\} $ in $L^{\infty}(L^2)$, implying, up to a subsequence, the weak* convergence respectively to $w$, $\theta$ and to $\dot{w}$, $\dot{\theta}$ in the previous spaces.
	In particular from the boundedness of $\{w_n\}, \{\theta_n\}$ and $\{\dot{w}_n\}, \{\dot{\theta}_n\}$ we also have weak convergence respectively in $L^{2}(H^2)$ and $L^{2}(Q)$; then, due to the compact embedding $H^1(Q)\subset L^2(Q)$, we obtain the strong convergence $w_{n}  \rightarrow w$, $\theta_{n}\rightarrow\theta$ in $L^{2}(Q)$, from which $\sin\theta_{n}\rightarrow\sin\theta $ in $L^2(Q)$, since  $\|\sin\theta_n-\sin\theta\|_{L^2(Q)}\leq\|\theta_n-\theta\|_{L^2(Q)}\rightarrow0$ as $n\rightarrow\infty$ (similarly $\cos\theta_{n}\rightarrow\cos\theta$).\par About $\Gamma$ in (\ref{gamma}), thanks to Lemma \ref{lipschitz nonlin}, \eqref{f:th2}, H\"{o}lder and Poincar\'e inequalities, there exists $C>0$ such that
	\small
	\begin{equation*}
	\begin{split}
	|\Gamma(w_n\pm\ell\sin\theta_n)-\Gamma(w\pm\ell\sin\theta)|\leq(\|(w_n-w)_x\|_1+\|\theta_n-\theta\|_{W^{1,1}})\\\hspace{15mm}\leq C(\|w_n-w\|_{L^\infty(H^2)}+\|\theta_n-\theta\|_{L^\infty(H^2)})\rightarrow 0,
	\end{split}
	\end{equation*}
	\normalsize
	up to a subsequence, implying $\Gamma(w_n\pm\ell\sin\theta_n)\rightarrow\Gamma(w\pm\ell\sin\theta)$.\par
	We consider the functional $\chi$ defined in (\ref{chi, gamma}), and we note that $|\chi(u)|<1$ for all $u\in H^2(0,L)\subset C^1[0,L]$. Then, we have $ \chi^2\big([w_n\pm\ell\sin\theta_n+y]^{**}\big)< 1$ and
	\begin{equation*}
	 \|\chi\big([w_n\pm\ell\sin\theta_n+y]^{**}\big)\|^2_{L^2(Q)}=\int_{0}^{T}\int_{0}^{L}\dfrac{\{[(w_n\pm\ell\sin\theta_n+y)^{**}]_{x}\}^2}{1+\{[(w_n\pm\ell\sin\theta_n+y)^{**}]_{x}\}^2}dx dt< LT.
	\end{equation*}
	Hence $\chi\big([w_n\pm\ell\sin\theta_n+y]^{**}\big)$ converges weakly, up to a subsequence, to $\chi\big([w\pm\ell\sin\theta+y]^{**}\big)$ in $L^2(Q)$. Therefore we pass to the limit the first equation in (\ref{eq weak sist gal}), since $\|(\mathcal{J}^{e_k}_\alpha)'\|_{\infty}\leq\|e'_k\|_{\infty}$. To do the same for the second equation in (\ref{eq weak sist gal}) we use (\ref{crucial3}) and the bounds
	\begin{equation*}
	\begin{split}
	&\|\chi\big([w_n\pm\ell\sin\theta_n+y]^{**}\big)\cos\theta_n\|^2_{L^2(Q)}< LT,\\&\|\chi\big([w_n\pm\ell\sin\theta_n+y]^{**}\big)\theta_{nx}\sin\theta_n\|^2_{L^2(Q)}\leq T\|\theta_{n}\|^2_{L^\infty(H^1)},
	\end{split}
	\end{equation*}
	which imply the weak convergence of these terms in $L^2(Q)$, up to a subsequence.\par
	Next, for any $n\geq 1$ we put
	\small
	\begin{equation*}
	\begin{split}
	&w^0_n:=\sum_{k=1}^{n} (w^0,e_k)_2 \hspace{1mm}e_k =\frac{L^4}{\pi^4}\sum_{k=1}^{n}\dfrac{(w^0,e_k)_{H^2}}{k^4} \hspace{1mm}e_k, \\
	&\theta^0_n:=\sum_{k=1}^{n} (\theta^0,e_k)_2 \hspace{1mm}e_k =\sum_{k=1}^{n}\bigg(EJ\frac{k^4\pi^4}{L^4}+GK\frac{k^2\pi^2}{L^2}\bigg)^{-1}[EJ(\theta^0,e_k)_{H^2}+GK(\theta^0,e_k)_{H^1}]\hspace{1mm}e_k,\\& w^1_n:=\sum_{k=1}^{n} (w^1,e_k)_2 \hspace{1mm}e_k,  \hspace{8mm} \theta^1_n:=\sum_{k=1}^{n}(\theta^1,e_k)_2 \hspace{1mm}e_k,
	\end{split}
	\end{equation*}
	\normalsize
	so that $w^0_n\rightarrow w^0$, $\theta^0_n\rightarrow\theta^0$ in $H^2$ and
	$w^1_n\rightarrow w^1$, $\theta^1_n\rightarrow\theta^1$ in $L^2$
	as $n\rightarrow\infty$.\par
	Therefore we pass to the limit the problem \eqref{eq weak sist gal}, so that there exists an approximable solution of (\ref{eq weak sist2})-(\ref{ics}), $(w,\theta)\in X^2_T$ in the sense of Definition \ref{defapproximable}.
\end{proof}

\section{Conclusions}\label{concl}

In this paper, we proposed a model in which both the hangers and the cables are deformable. This was previously considered in
\cite{Gazzola hyperb} through a four DOF model, but here we have only focused on the vertical displacements and the torsional rotations of the deck,
thereby dealing with a two DOF model. In this case it appears out of reach to obtain a precise explanation of the origin
of torsional instability in terms of Poincar\'e maps as in \cite{argaz}. However, our numerical results still show the same qualitative phenomenon:
after exceeding a certain energy threshold the system becomes unstable and sudden and violent torsional oscillations appear.\par
The analyisis of this new model for suspension bridges requires the study of the variation of an energy functional depending on the convexification of the involved functions. The computation of the Gateaux derivatives of the functional is quite involved and, apart of \textquotedblleft spoiling''  the action of the smooth
test functions, it does not exist in some situations.
After a full energy balance, in Section \ref{The variational problem} we derived the weak form of the system of nonlinear nonlocal partial differential inclusions. This system is nonlinear due to the convexification, the geometric configuration of the cables and the rotation of the deck; moreover,
we avoided the linearization based on smallness assumptions on the torsional angle of the deck.\par
The typical behavior of civil structures enabled us to consider approximable solutions as representative enough of our problem, thereby reducing
to a system of ordinary differential equations, through the Galerkin procedure. We then proved existence of weak approximable solutions.
This enabled us to study the problem numerically, considering 10 longitudinal modes interacting with 4 torsional modes and we found a threshold
of torsional instability for each longitudinal mode excited. We compared these thresholds with those of the correspondent model
without convexification. Our numerical results show that, for structures displaying only low modes of vibration (e.g. 1$^{st}$,
2$^{nd}$, 3$^{rd}$), we may assume inextensible hangers, reducing the computational costs and obtaining safe instability thresholds. On the other hand,
if the structure vibrates on higher modes (e.g. 9$^{th}$, 10$^{th}$) as the TNB, this assumption may provide overestimated thresholds. Here the slackening
of the hangers increases dangerously the torsional instability; this fact should be a warning for the designers of bridge structures, that are able to
exhibit, in realistic situations, large vibration frequencies. Let us recall that the wind velocity determines the excited mode, see \cite[pp.21-27]{TNB}
and that an explicit rule has been recently found in \cite{bonedegaz}. It turns out that the longitudinal modes that were torsionally unstable at the TNB
were the 9$^{th}$ and 10$^{th}$, precisely the ones for which we found lower thresholds of instability in the new model with convexification (that is, with
hangers slackening).\par\medskip\noindent
{\bf Acknowledgements.} The third Author is partially supported by the PRIN project {\em Equazioni alle derivate parziali di tipo ellittico e parabolico:
aspetti geometrici, disuguaglianze collegate, e applicazioni} and by the Gruppo Nazionale per l'Analisi Matematica, la Probabilit\`a e le loro
Applicazioni (GNAMPA) of the Istituto Nazionale di Alta Matematica (INdAM).

\end{document}